\DeclareMathOperator*{\esssup}{ess\,sup}
\numberwithin{equation}{section}
\renewcommand{\hat}{\widehat}
\DeclareMathOperator*{\essinf}{ess\,inf}
\DeclareMathOperator*{\argmin}{argmin}
\newcommand{\R}{\mathbb{R}}
\newcommand{\N}{\mathbb{N}}
\renewcommand{\S}{\mathscr{S}}
\newcommand{\cA}{\mathcal{A}}
\newcommand{\cC}{\mathcal{C}}
\newcommand{\cE}{\mathcal{E}}
\newcommand{\cL}{\mathcal{L}}
\newcommand{\cR}{\mathcal{R}}
\newcommand{\cT}{\mathcal{T}}
\newcommand{\uhat}{{\widehat{u}}}
\renewcommand{\hat}{\widehat}
\renewcommand{\epsilon}{\varepsilon}
\renewcommand{\S}{\mathcal{S}} 
\newcommand{\Et}{\mathcal{E}_T} 
\newcommand{\Eg}{\mathcal{E}_G} 
\newcommand{\bigO}{\mathcal{O}}
\newcommand{\Prob}[1]{{\mathbb{P}\left( #1 \right)}} 
\newcommand{\sgn}[1]{\mathrm{sgn}\left( #1 \right)}
\newtheorem{theorem}{Theorem}[section]
\newtheorem{remark}[theorem]{Remark}
\newtheorem{definition}[theorem]{Definition}
\newtheorem{lemma}[theorem]{Lemma}
\newtheorem{corollary}[theorem]{Corollary}
\title[wPINNs for approximating conservation laws]{wPINNs: \emph{Weak} Physics informed neural networks for approximating entropy solutions \\of hyperbolic conservation laws}
\author{T.~De Ryck}
\author{S.~Mishra}
\author{R.~Molinaro}
\address[T. De Ryck, S. Mishra and R. Molinaro]{Seminar for Applied Mathematics, ETH Z\"urich, R\"amistrasse 101, 8092 Z\"urich, Switzerland}
\begin{document}

\maketitle


\begin{abstract}
Physics informed neural networks (PINNs) require regularity of solutions of the underlying PDE to guarantee accurate approximation. Consequently, they may fail at approximating discontinuous solutions of PDEs such as nonlinear hyperbolic equations. To ameliorate this, we propose a novel variant of PINNs, termed as weak PINNs (\emph{wPINNs}) for accurate approximation of entropy solutions of scalar conservation laws. \emph{wPINNs} are based on approximating the solution of a min-max optimization problem for a residual, defined in terms of Kruzkhov entropies, to determine parameters for the neural networks approximating the entropy solution as well as test functions. We prove rigorous bounds on the error incurred by \emph{wPINNs} and illustrate their performance through numerical experiments to demonstrate that \emph{wPINNs} can approximate entropy solutions accurately.       
\end{abstract}

\section{Introduction}
Driven by their well-documented successes in fields ranging from computer vision and natural language understanding to robotics and autonomous vehicles, machine learning techniques, particularly deep learning \cite{DLnat}, are being increasingly used in scientific computing. Given the fact that \emph{deep neural networks} (DNNs) are universal function approximators, they have proved to be a popular choice for ansatz spaces for the construction of fast surrogates for approximating a variety of partial differential equations (PDEs) including elliptic \cite{SZ1,Kuty}, parabolic \cite{HEJ1} and hyperbolic PDEs \cite{LMR1,LMPR1}. Even, the underlying solution operators can be \emph{learned} using DNN based operator learning frameworks such as DeepONets \cite{deeponets} or Fourier Neural Operators \cite{FNO}. 

All the afore-mentioned approaches fall under the rubric of \emph{supervised learning} and require significant amounts of data for training the underlying DNNs. However, this training data is generated from either observations (experiments) or numerical simulations and can be very expensive to access and store. Hence, in many contexts, one needs a learning framework that can approximate the underlying PDE solutions with very little or possibly \emph{no data}. Physics informed neural networks (PINNs) provide a very popular example of such an \emph{unsupervised learning} framework. In contrast to supervised learning approaches where the mismatch between the ground truth and neural network predictions is minimized during training, training of PINNs relies on the minimization of an underlying (pointwise) residual associated with the PDE in suitable hypotheses spaces of neural networks. 

PINNs were first proposed, albeit in slightly different form, in \cite{DPT,Lag1,Lag2} but they were resurrected and popularized more recently in \cite{KAR1,KAR2} as an efficient alternative to traditional numerical methods, for solving both forward as well as inverse problems for PDEs. Since their reintroduction, the use of PINNs has grown exponentially in different areas of scientific computing and a very selected list of references include \cite{KAR4,KAR6,KAR7,KAR8,jagtap2020extended,jag2,jagtap2022physics, jin2021nsfnets,MM1,MM2,MM3,BKMM1, shukla2021physics, jagtap2022deep, hu2021extended, shukla2021parallel,MM1,MM2,MM3,deryck2021navierstokes,deryck2021approximation} and references therein, see also \cite{cuomo} for an extensive recent review of PINNs. 

Although less advanced than the applications of PINNs in various domains, there has been significant development of the theory for PINNs recently, particularly in the form of rigorous bounds on various sources of the underlying error. These include \cite{shin2020convergence} where the authors show consistency of PINNs with the underlying linear elliptic and parabolic PDE under stringent assumptions and in \cite{Zhang1} where similar estimates are derived for linear advection equations. In \cite{MM1,MM2}, the authors provided a roadmap for deriving error estimates for PINNs and applied it for both the forward and inverse problems in a variety of elliptic, parabolic and convection-dominated PDEs. This strategy was further refined and adapted to very-high dimensional Kolmogorov PDEs in \cite{deryck2021pinn} as well to the Navier-Stokes equations in \cite{deryck2021navierstokes}, among others. The key elements of this strategy, as also identified in \cite{deryck2022generic}, are as follows,
\begin{itemize}
    \item [i.)] \emph{Regularity} of the solutions of the underlying PDEs, which enables one to apply estimates on the error (in high-enough Sobolev norms), incurred by neural networks in approximating smooth functions. This regularity is leveraged into proving that the PDE residuals, which are to be minimized during the training process, can be made arbitrarily small.
     \item [ii.)] \emph{Coercivity} (or stability) of the underlying PDEs which ensure that the total error can be estimated in terms of the residuals.  For nonlinear PDEs, the constants in these coercivity estimates often depend on the regularity of the underlying solutuions.
     \item [iii.)] \emph{Quadrature error} bounds for estimating the so-called generalization gap between the continuous and discrete versions of the PDE residual \cite{deryck2021approximation}. 
\end{itemize}
These elements also bring forth the \emph{limitations of PINNs} by highlighting PDEs where PINNs might not provide an accurate approximation. In particular, there are a large number of contexts in which solutions of PDEs might not be smooth, even for smooth inputs. The analysis of \cite{MM1,deryck2021navierstokes,deryck2022generic} suggests that conventional forms of PINNs will fail to accurately approximate solutions of these PDEs. 

A prototypical example of a class of PDEs for which the underlying solutions are not smooth, is provided by nonlinear hyperbolic systems of conservation laws such as the Euler, shallow-water and ideal Magneto-Hydrodynamics (MHD) equations \cite{holden2015front}. Even in the simplest case of a \emph{scalar conservation law}, e.g. inviscid Burgers' equation, it is well-known that, even if the initial datum is smooth, discontinuities in the form of \emph{shock waves} form within a finite time. Hence, the underlying equation can no longer be interpreted in a (pointwise) \emph{strong} sense, rather \emph{weak solutions} need to be considered \cite{holden2015front}. Moreover, these weak solutions are no longer unique. Additional admissibility criteria or \emph{entropy conditions} have to be imposed in order to restore uniqueness \cite{holden2015front}. 

As the solutions of hyperbolic conservation laws can be discontinuous, the analysis of \cite{MM1,deryck2022generic} and references therein suggests that PINNs cannot accurately approximating the underlying weak solutions of these PDEs. This fact is also empirically verified in \cite{MM1} (Figure 6 (d)) where unacceptably large errors were observed for PINNs approximating scalar conservation laws. This can be explained by the fact that the pointwise residual blows up for smooth approximations of the underlying exact solution and minimizing it in the class of neural networks is futile.  

Given this context, we ask if one can modify PINNs to design an \emph{unsupervised learning framework} for approximating (entropy) weak solutions of hyperbolic conservation laws and related equations, such that the resulting error can be rigorously proved to be arbitrarily small. Answering this question affirmatively constitutes the central rationale of the current paper. 

To this end, we will focus on the simple yet prototypical case of scalar conservation laws here. As mentioned earlier, the pointwise residuals associated with (approximations of) weak solutions can blow up. Hence, we need to replace these pointwise PDE residuals with suitable \emph{weak} versions. This can be achieved by mimicking the weak formulation of the underlying conservation laws \cite{holden2015front} and integrating by parts with respect to smooth \emph{test functions} to define a weak form of the PDE residual. Such weak versions of PINNs have already been considered in the context of so-called \emph{variational PINNs} \cite{vpinns}, where the test functions are selected as suitable basis functions, such as orthogonal polynomials. Such an approach can indeed be considered in our context. However, we refrain from doing so here as a key advantage for PINNs is that it is a \emph{meshless} approach and does not require any underlying grid. However, variational PINNs are often defined in terms of locally supported functions on a mesh. Instead, we will leverage the universal approximation properties of neural networks and choose parametrized neural networks as our test functions. Moreover, neural networks are also used as approximations to the underlying solution i.e., as trial functions. Hence, our weak formulation leads to a \emph{min-max} optimization problem where the neural network parameters (weights and biases) are maximized with respect to test functions and minimized with respect to trial functions. Such min-max problems arise in machine learning when training generative adversarial networks or GANs, \cite{gan,wgan} and references therein. 

However, working with the weak formulation alone does not suffice to build an accurate approximation strategy for scalar conservation laws as one also needs to incorporate entropy conditions. To this end, we will define a novel \emph{entropy residual}, based on the well-known family of \emph{Kruzkhov entropies} \cite{holden2015front} and solve the corresponding min-max optimization problem for training the neural networks that will approximate the entropy solution accurately. We term the resulting construction as \emph{wPINNs} or weak PINNs and prove rigorous error bounds on them. In particular, we will prove that the entropy residuals, associated with \emph{wPINNs} can be made arbitrarily small. We rely on Kruzkhov's method of \emph{doubling of variables} to prove that the error of the \emph{wPINN} with respect to the entropy solution can be bounded in terms of the (continuous version) of the entropy residual. Finally, statistical learning theory arguments, already considered in the context of PINNs in \cite{deryck2021approximation}, are adapted to yield a bound on the generalization gap. Thus, taken together, these bounds provide rigorous estimates on the error for \emph{wPINNs} and prove that \emph{wPINNs} can approximate entropy solutions of scalar conservation laws accurately. We also provide numerical experiments to illustrate the efficient approximation of entropy solutions by \emph{wPINNs}. Hence, we design a novel variant of PINNs to approximate discontinuous entropy solutions of conservation laws and prove that the approximation is accurate. 
\section{\emph{wPINNs} for Scalar conservation laws}
\label{sec:2}
In this section, we will describe the construction of \emph{wPINNs} for approximating the entropy solutions of scalar conservation laws. We start by recapitulating some basic concepts about conservation laws. 
\subsection{Scalar Conservation Laws}
\label{sec:21}
For simplicity of exposition, we will focus on the case of one spatial dimension while mentioning that extending the results to several space dimensions is straightforward. Without loss of generality, we fix $D = [0,1] \subset \R$ as the spatial domain and consider the scalar conservation law, 
\begin{equation}
\label{eq:scl}
\begin{aligned}
    u_t + f(u)_x  = 0&\quad \text{in}~  D\times [0,T]\\
        u = u_0&\quad \text{on}~  D\times \{0\}.
\end{aligned}
\end{equation}
Here, $u \in L^1(D \times (0,T))$ is the conserved quantity and $f$ is the so-called flux function with $u_0$ being the initial data. Moreover, the PDE \eqref{eq:scl} needs to be supplemented with suitable boundary conditions. We mostly consider periodic boundary conditions in this paper.

Following \cite{holden2015front}, one defines \emph{weak solutions} of \eqref{eq:scl} as follows,
\begin{definition}\label{def:weak-solution}
A function $u\in L^\infty(\R\times \R_+)$ is a weak solution of \eqref{eq:scl} with initial data $u_0\in L^\infty(\R)$ if
\begin{equation}\label{eq:weak-solution}
    \int_{\R_+}\int_\R \left(u\varphi_t + f(u)\varphi_x\right) dxdt + \int_\R u_0(x)\varphi(x,0)dx = 0 ,
\end{equation}
holds for all test functions $\varphi\in C^1_c(\R\times \R_+)$. 
\end{definition}
However, weak solutions are not unique \cite{holden2015front}. To recover uniqueness, one needs to impose additional admissibility criteria or \emph{entropy conditions}. To this end, we consider the so-called \emph{Kruzkhov entropy functions}, given by $|u-c|$, for any $c\in\R$ and the resulting entropy flux functions,
\begin{equation}\label{eq:def-q}
    \partial_t \abs{u-c}+\partial_x Q[u;c]\leq 0 \qquad \text{where}\quad Q:\R^2\to \R: (u,c)\mapsto \sgn{u-c}(f(u)-f(c)).
\end{equation}
With this notation, we have the following definition of \emph{entropy solutions},
\begin{definition}\label{def:entropy}
We say that a function $u\in L^\infty(\R\times \R_+)$ is an entropy solution of \eqref{eq:scl} with initial data $u_0\in L^\infty(\R)$ if $u$ is a weak solution of \eqref{eq:scl} and if $u$ satisfies that 
\begin{equation}\label{eq:kruzkov}
    \int_{0}^T\int_\R \left(\abs{u-c}\varphi_t + Q[u;c]\varphi_x\right) dxdt - \int_\R\left(\abs{u(x,T)-c}\varphi(x,T)- \abs{u_0(x)-c}\varphi(x,0)\right)dx \geq 0
\end{equation}
for all $\varphi\in C^1_c(\R\times \R_+)$, $c\in \R$ and $T>0$. 
\end{definition}
It holds that these entropy solutions are unique and continuous in time, as formulated below \cite{holden2015front}, where $\norm{\cdot}_{TV}$ denotes the total variation seminorm. 
\begin{theorem}\label{thm:entropy-sol}
Assume that $f\in C^1$ and $u_0\in L^\infty\cap L^1$. Then there exists a unique entropy solution $u$ of \eqref{eq:scl} and if $\norm{u_0}_{TV}<\infty$ then $u$ satisfies the following, 
\begin{equation}
    \norm{u(t)-u(s)}_{L^1}\leq \abs{t-s}M\norm{u_0}_{TV}\qquad \text{and}\qquad \norm{u(t)}_{L^\infty}\leq \norm{u_0}_{L^\infty}, ~ \norm{u(t)}_{BV}\leq \norm{u_0}_{BV},
\end{equation}
where $M = M(u_0) = \max_{\essinf_x u_0(x)\leq u\leq \esssup_x u_0(x)} \abs{f'(u)}$. 
\end{theorem}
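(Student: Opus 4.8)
The plan is to realise the entropy solution as a vanishing--viscosity limit, establishing all the a priori bounds already at the level of the regularised equation, and then to prove uniqueness (which in particular shows the limit does not depend on the regularisation) by Kruzkhov's method of doubling of variables. First I would mollify the datum to obtain $u_0^\eta\in C^\infty$ with $\|u_0^\eta\|_{L^\infty}\le\|u_0\|_{L^\infty}$, $\|u_0^\eta\|_{BV}\le\|u_0\|_{BV}$ and $u_0^\eta\to u_0$ in $L^1$, and solve the parabolic problem $u_t^\epsilon+f(u^\epsilon)_x=\epsilon u^\epsilon_{xx}$ with datum $u_0^\eta$, which has a unique smooth solution. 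The maximum principle gives $\|u^\epsilon(t)\|_{L^\infty}\le\|u_0\|_{L^\infty}$, so the range of $u^\epsilon$ stays inside that of $u_0$ and $|f'(u^\epsilon)|\le M$ throughout. Differentiating the PDE in $x$, the quantity $u^\epsilon_x$ solves a linear conservative parabolic equation, which yields $\|u^\epsilon_x(t)\|_{L^1}\le\|\partial_x u_0^\eta\|_{L^1}\le\|u_0\|_{BV}$; differentiating in $t$, the same linear equation governs $u^\epsilon_t$, giving $\|u^\epsilon_t(t)\|_{L^1}\le\|\partial_x f(u_0^\eta)\|_{L^1}+\epsilon\|\partial_{xx}u_0^\eta\|_{L^1}\le M\|u_0\|_{TV}+o(1)$ once $\eta=\eta(\epsilon)$ is chosen suitably, hence $\|u^\epsilon(t)-u^\epsilon(s)\|_{L^1}\le|t-s|(M\|u_0\|_{TV}+o(1))$. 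All three bounds are uniform in $\epsilon$.

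Next I would use the uniform $L^\infty$ bound, the uniform spatial $BV$ bound and the uniform $L^1$-Lipschitz-in-time bound to extract, via a Kolmogorov--Riesz / Helly type compactness argument, a subsequence $u^\epsilon\to u$ in $L^1_{\loc}(\R\times\R_+)$; the three bounds then pass to $u$ (the $L^\infty$ and $BV$ ones by lower semicontinuity of the norms, the time estimate directly). Passing to the limit in the weak formulation of the regularised equation shows that $u$ satisfies \eqref{eq:weak-solution}. For the entropy inequality, fix $c$, take a smooth convex approximation $\eta_\delta$ of $|\cdot-c|$ with $\eta_\delta''\ge0$, and observe
\begin{equation*}
\epsilon\,\eta_\delta'(u^\epsilon)\,u^\epsilon_{xx}=\epsilon\,\partial_{xx}\bigl[\eta_\delta(u^\epsilon)\bigr]-\epsilon\,\eta_\delta''(u^\epsilon)(u^\epsilon_x)^2\le\epsilon\,\partial_{xx}\bigl[\eta_\delta(u^\epsilon)\bigr].
\end{equation*}
Multiplying the regularised PDE by $\eta_\delta'(u^\epsilon)$, testing against a nonnegative $\varphi\in C^1_c$, sending $\epsilon\to0$ (the $\epsilon$-term vanishes) and then $\delta\to0$ recovers \eqref{eq:kruzkov}, with the initial datum attained in $L^1$ thanks to the time continuity. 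Hence $u$ is an entropy solution.

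For uniqueness, given two entropy solutions $u,v$ with the same datum, I would apply \eqref{eq:kruzkov} to $u$ with the constant $c=v(y,s)$ and to $v$ with $c=u(x,t)$, add the two inequalities, and test against $\varphi(x,t,y,s)=\psi\bigl(\tfrac{x+y}{2},\tfrac{t+s}{2}\bigr)\rho_\delta\bigl(\tfrac{x-y}{2}\bigr)\rho_\delta\bigl(\tfrac{t-s}{2}\bigr)$ for a standard mollifier $\rho_\delta$. In the limit $\delta\to0$ the terms carrying derivatives of $\rho_\delta$ cancel, leaving $\int_0^T\!\!\int_\R(|u-v|\psi_t+Q[u;v]\psi_x)\,\d x\,\d t\ge(\text{terminal term})-(\text{initial term})$ for all nonnegative $\psi\in C^1_c$; choosing $\psi$ to approximate the indicator of a truncated backward cone and using $|Q[u;v]|\le M|u-v|$ together with the vanishing of the initial term gives $\|u(t)-v(t)\|_{L^1}\le\|u(0)-v(0)\|_{L^1}=0$, whence $u=v$.

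The technical heart of the argument — and the main obstacle — is the doubling-of-variables step: since entropy solutions are merely $L^\infty$ and the sign function is discontinuous, the manipulations with a test function concentrating on the diagonal $\{x=y,\ t=s\}$ must be justified carefully via Lebesgue-point arguments, and the strong $L^1$-in-time continuity that makes the initial and terminal traces meaningful requires its own standard-but-delicate argument. The other delicate point is the uniform $BV$ bound of the first step, which becomes the crux if one replaces the viscous approximation by front tracking or a monotone finite-difference scheme. For the complete details I would follow \cite{holden2015front}.
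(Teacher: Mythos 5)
This theorem is not proved in the paper at all: it is the classical Kruzkhov well-posedness result, quoted verbatim from the cited monograph \cite{holden2015front}, so there is no internal proof to compare against. Your outline is the standard and correct route — vanishing viscosity for existence and the a priori bounds (maximum principle for the $L^\infty$ bound, differentiating the regularised equation in $x$ and $t$ plus Kato-type $L^1$ estimates for the $BV$ and time-Lipschitz bounds with the constant $M$ controlled by the invariant range of $u^\epsilon$), Helly/Kolmogorov--Riesz compactness, entropy inequalities via convex regularisation of $\abs{\cdot-c}$, and uniqueness by doubling of variables — and it matches Kruzkhov's original argument rather than the construction in the cited book, which obtains existence by front tracking (the doubling-of-variables uniqueness and $L^1$-stability part is common to both). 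The trade-off is worth noting: front tracking is constructive, works directly for merely Lipschitz (indeed piecewise linear) fluxes, and yields the TVD and time-continuity estimates exactly as stated, whereas your viscous route needs $f$ to be differentiated twice when you differentiate the parabolic equation in $x$, so under the stated hypothesis $f\in C^1$ you must additionally approximate $f$ by smooth fluxes (uniformly on the compact range $[\essinf u_0,\esssup u_0]$, so that the constant $M$ is preserved in the limit) and invoke $L^1$-stability with respect to flux perturbations or a further compactness step; with that small repair, and the Lebesgue-point care you already flag in the doubling step, your sketch is a complete and faithful account of the classical proof.
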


\subsection{Neural networks}
\label{sec:22}
Our aim in this paper is to approximate entropy solutions of \eqref{eq:scl} with neural networks, which we formally define next. 
\begin{definition}
\label{def:nn-app}
Let $R\in(0,\infty]$, $L,W\in\mathbb{N}$ and $l_0,\ldots, l_L\in\mathbb{N}$. Let $\sigma:\mathbb{R}\to\mathbb{R}$ be a measurable function, the so-called \emph{activation function}, and define 
\begin{equation}
   \Theta =  \Theta_{L,W,R} := \bigcup_{L'\in\mathbb{N}, L'\leq L}\:\bigcup_{l_0,\ldots,l_L\in\{1, \ldots, W\}}\bigtimes_{k=1}^{L'} \left([-R,R]^{l_k\times l_{k-1}}\times[-R,R]^{l_k}\right). 
\end{equation}
For $\theta\in\Theta_{L,W,R}$, we define $(W_k,b_k):=\theta_k$ and $\mathcal{A}_k^\theta:\mathbb{R}^{l_{k-1}}\to\mathbb{R}^{l_{k}}:z\mapsto W_k z+b_k$ for $1\leq k\leq L$ and we denote by $u_\theta:\mathbb{R}^{l_0}\to\mathbb{R}^{l_L}$ the function that satisfies for all $z \in\mathbb{R}^{l_0}$ that
\begin{equation}
\label{eq:dnn}
     u_\theta(z) = \left(\cA_{L}^\theta\circ\sigma\circ \cA_{L-1}^\theta \circ \cdots \circ\sigma\circ \cA_1^\theta\right)(z), 
\end{equation}
where in the setting of approximating solutions to PDEs we set $l_0 = d+1$ and $z=(x,t)$.

We refer to $u_\theta$ as the realization of the neural network associated to the parameter $\theta$ with $L$ layers with widths $(l_0,l_1, \ldots, l_L)$, of which the middle $L-1$ layers are called hidden layers. For $1\leq k\leq L$, we say that layer $k$ has width $l_k$ i.e., we say that it consists of $l_k$ neurons, and we refer to $W_k$ and $b_k$ as the weights and biases corresponding to layer $k$. If $L\geq 3$, we say that $u_\theta$ is a deep neural network (DNN). The total number of neurons in the network is given by the sum of the layer widths, $\sum_{k=0}^L l_k$. Note that the weights and biases of neural network $u_\theta$ with $\theta \in \Theta_{L,W,R}$ are bounded by $R$. 

\end{definition}
\subsection{Physics informed neural networks (PINNs)}
\label{sec:23}
We briefly introduce PINNs and how these neural networks could be used to approximate the solutions of the scalar conservation law \eqref{eq:scl}. Given that we are on a finite domain, we (formally) write down the boundary conditions supplementing \eqref{eq:scl} as, $u = g$ on $\partial D \times [0,T]$. Then, one can consider the following (pointwise) residuals associated with the PDE \eqref{eq:scl} and initial (and boundary) conditions,
\begin{align}\label{eq:pinn-residuals}
    \begin{split}
        r_{int}[v](x,t) = v_t+f(v)_x, \qquad r_{sb}[v](y,t) = v(y,t)-g(y,t), \qquad r_{tb}[v](x) =v(x,0) - u_0(x),
    \end{split}
\end{align}
where $v\in C^1(D\times [0,T])$ and where $r_{int}$ is the PDE residual, $r_{sb}$ is the (spatial) boundary residual and $r_{tb}$ is the temporal boundary residual stemming from the initial condition. Ideally, one would then solve the following minimization problem,
\begin{equation}\label{eq:pinn-min}
    \widehat{\theta} := \argmin_{\theta\in\Theta}\left(\norm{r_{int}[u_\theta]}^2_{L^2(D\times [0,T])} + \lambda_{sb}\norm{r_{sb}[u_\theta]}^2_{L^2(\partial D\times [0,T])} + \lambda_{tb}\norm{r_{tb}[u_\theta]}^2_{L^2(D)}\right),
\end{equation}
and define the PINN as $u_{\widehat{\theta}}$, with $\theta$ in \eqref{eq:pinn-min} denoting tunable parameters (weights and biases) of the underlying neural networks \eqref{eq:dnn}. In practice, one cannot exactly solve the above minimization problem. Instead, one approximates the integrals in \eqref{eq:pinn-min} using a numerical quadrature and one tries to find an approximate minimizer using a (stochastic) gradient descent algorithm. 

One can already observe from the form of the PDE residual in \eqref{eq:pinn-residuals} that it is imposed pointwise. Although well-defined as long as the activation function $\sigma$ in \eqref{eq:dnn} is smooth, one expects this residual to blow up as the residual corresponding to smooth approximations of entropy solutions of conservation laws, such as viscous profiles, blows up as the profile width vanishes \cite{holden2015front,MM1}. Hence, we cannot expect that PINNs will approximate weak solutions of \eqref{eq:scl} accurately, particularly near shocks. This is indeed observed empirically in \cite{MM1} (Figure 6 (d)). 

\subsection{Weak PINNs (\emph{wPINNs})}
\label{sec:24}
As mentioned in the introduction, we will circumvent the failure of conventional PINNs that minimized the pointwise PDE residual \eqref{eq:pinn-min} by searching for neural networks that minimize a residual, related to the Kruzkhov entropy condition instead. To this end,  we define for $v\in (L^\infty\cap L^1)(D\times [0,T])$, $\varphi \in W^{1,\infty}_0(D\times [0,T])$ and $c\in\R$ the following \emph{Kruzhov entropy residual},
\begin{equation}\label{eq:R-def}
    \cR(v,\varphi, c) := - \int_D\int_{[0,T]} \left(\abs{v(x,t)-c}\partial_t\varphi(x,t) + Q[v(x,t); c]\partial_x\varphi(x,t) \right)dx dt.
\end{equation}
Note that if $u$ is an entropy solution of \eqref{eq:scl}, then it holds that $\cR(u,\varphi, c)\leq 0$. This suggests solving the following min-max counterpart to the PINN minimization problem \eqref{eq:pinn-min}, 
\begin{equation}\label{eq:minimax}
    \theta^* := \argmin_{\theta\in\Theta}\max_{\varphi\in W^{1,\infty}_0(D\times [0,T]), c\in\R}\left(\cR(u_\theta,\varphi, c) +  \lambda_{sb}\norm{r_{sb}[u_\theta]}^2_{L^2(\partial D\times [0,T])} + \lambda_{tb}\norm{r_{tb}[u_\theta]}^2_{L^2(D)}\right),
\end{equation}  
The DNN associated with the $u_{\theta^*}$ is defined as the weak PINN, \emph{wPINN} for short, and we will show that it provides an accurate approximation of the entropy solution $u$ of \eqref{eq:scl}. 

We observe that in practice, the test function space $W^{1,\infty}_0$ needs to be replaced by a finite-dimensional approximation. One possibility is to use locally supported (piecewise) polynomials or orthogonal polynomials such as Legendre polynomials. Such choices lead to what is often termed as variational PINNs. However, we will depart from this choice and leverage the universal approximation property of neural networks to approximate  $W^{1,\infty}_0$ by parametrized neural networks of the form \eqref{eq:dnn} with a $C^1$-activation function. This leads to a min-max optimization problem where maximum as well as the minimum are taken with respect to neural network training parameters. We refer to Section \ref{sec:weak-pinn-training} for details on how the min-max problem \eqref{eq:minimax} can be solved in practice. 
\section{Error Analysis for \emph{wPINNs}}
\label{sec:3}
In this section, we will provide rigorous bounds on different components of the error incurred by \emph{wPINNs} in approximating the entropy solution of the conservation law \eqref{eq:scl}. 
\subsection{Bounds on the Entropy Residual}
\label{sec:approximation}
We start the rigorous analysis of \emph{wPINNs} by following the layout of \cite{deryck2021approximation} and asking if the entropy residual \eqref{eq:R-def} can be made arbitrarily small, within the class of neural networks. An affirmative answer to this question will confirm that the choice of the loss function for \emph{wPINN} is suitable. To investigate this question, we start with the following observation, 
\begin{lemma}\label{lem:Q}
Let $f:\R\to\R$ be Lipschitz with constant $L_f$, let $Q$ be as in \eqref{eq:def-q} and let $c,u,v\in\R$. It holds that $\abs{Q[u; c]-Q[v; c]}\leq 3L_f\abs{u-v}$. 
\end{lemma}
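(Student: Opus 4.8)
The plan is to argue by an elementary case distinction according to the relative positions of $u$, $v$ and $c$, using that $Q[\,\cdot\,;c]$ is the Lipschitz function $f$ (shifted by $f(c)$) multiplied by a sign factor that only changes across $c$. Throughout, recall that $f(u)-f(c)=0$ whenever $u=c$, so the value assigned to $\sgn{0}$ never affects $Q$.

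First I would treat the case where $u$ and $v$ lie on the same side of $c$, i.e.\ $(u-c)(v-c)\geq 0$. If both $u-c$ and $v-c$ are nonnegative, then $Q[u;c]-Q[v;c] = (f(u)-f(c)) - (f(v)-f(c)) = f(u)-f(v)$, hence $\abs{Q[u;c]-Q[v;c]} = \abs{f(u)-f(v)}\leq L_f\abs{u-v}\leq 3L_f\abs{u-v}$; the subcase where both are nonpositive is identical up to an overall sign. Next, for $u$ and $v$ strictly on opposite sides of $c$, say $v<c<u$ (the reverse order being symmetric), I would write $Q[u;c]=f(u)-f(c)$ and $Q[v;c]=f(c)-f(v)$, so that $Q[u;c]-Q[v;c]=(f(u)-f(c))+(f(v)-f(c))$, and apply the triangle inequality and Lipschitz continuity of $f$: $\abs{Q[u;c]-Q[v;c]}\leq L_f\abs{u-c}+L_f\abs{v-c}$. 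Since $v<c<u$, $\abs{u-c}+\abs{v-c}=(u-c)+(c-v)=u-v=\abs{u-v}$, which again yields the claim (indeed with the sharper constant $L_f$).

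Alternatively — and this is the route that produces the stated constant $3L_f$ uniformly, without a case split — one adds and subtracts $\sgn{u-c}(f(v)-f(c))$ to get $Q[u;c]-Q[v;c] = \sgn{u-c}(f(u)-f(v)) + (\sgn{u-c}-\sgn{v-c})(f(v)-f(c))$, and then bounds $\abs{\sgn{u-c}-\sgn{v-c}}\leq 2$ together with $\abs{f(v)-f(c)}\leq L_f\abs{v-c}$. When $u$ and $v$ are on the same side of $c$ the second term vanishes; when they are on opposite sides $c$ lies between them, so $\abs{v-c}\leq\abs{u-v}$. Either way $\abs{Q[u;c]-Q[v;c]}\leq L_f\abs{u-v}+2L_f\abs{u-v}=3L_f\abs{u-v}$.

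There is no genuine obstacle here: the statement is a routine Lipschitz estimate. The only points requiring a little care are the convention for $\sgn{0}$ and the degenerate cases $u=c$ or $v=c$, which are harmless because in each such case the ambiguous sign is multiplied by a vanishing factor $f(\cdot)-f(c)$; these are subsumed in the "same side" case. Combining the two cases (or using the uniform add-and-subtract estimate) completes the proof.
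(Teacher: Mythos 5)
Your proposal is correct, and your second (``add-and-subtract'') route is essentially the paper's argument: the paper writes $Q[u;c]-Q[v;c]=(\sgn{u-c}-\sgn{v-c})(f(u)-f(c))+\sgn{v-c}(f(u)-f(v))$ and bounds the two terms by $2L_f\abs{u-v}$ and $L_f\abs{u-v}$ respectively, using exactly your observation that a sign change forces $c$ to lie between $u$ and $v$; your decomposition is the same one with the roles of $u$ and $v$ interchanged. Your first route, the direct case analysis on whether $u$ and $v$ lie on the same side of $c$, is genuinely different and slightly stronger: it shows that $Q[\,\cdot\,;c]$ is in fact Lipschitz with constant $L_f$ (the true Lipschitz constant of the Kruzkhov flux), whereas the paper's splitting only yields $3L_f$ --- which is all that is needed downstream, since the constant enters the later estimates only multiplicatively. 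You also handle the degenerate cases $u=c$ or $v=c$ more carefully than the paper, whose phrase ``either $u<c<v$ or $v<c<u$'' silently ignores them; as you note, they are harmless because the ambiguous sign multiplies a vanishing factor $f(\cdot)-f(c)$.
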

\begin{proof}
We calculate,
\begin{equation}
    \begin{split}
Q[u; c]-Q[v; c] &= \sgn{u-c}(f(u)-f(c))-\sgn{v-c}(f(v)-f(c))\\
&= (\sgn{u-c}-\sgn{v-c})(f(u)-f(c)) + \sgn{v-c}(f(u)-f(v))\\
&=: T_1 + T_2.
    \end{split}
\end{equation}
Note that if $\sgn{u-c}\neq\sgn{v-c}$ then either $u<c<v$ or $v<c<u$ pointwise almost everywhere. Consequently, if $\sgn{u-c}\neq\sgn{v-c}$ then $\abs{u-c}\leq \abs{u-v}$. 
We therefore find that
\begin{equation}
    \abs{T_1} \leq L_f \abs{\sgn{u-c}-\sgn{v-c}}\abs{u-c} \leq 2L_f \abs{u-v}. 
\end{equation}
Moreover, it holds that $\abs{T_2} \leq L_f \abs{u-v}$. In total we therefore have that $\abs{Q[u; c]-Q[v; c]}\leq 3L_f\abs{u-v}$. 
\end{proof}

Next, we prove that for any test function $\varphi\in W_0^{1,\infty}$, any $q\geq 1$ and any $c\in\R$ the quantity $\cR(v,\varphi, c)$ can be bounded in terms of the $L^q$-norm of $v-u$.

\begin{theorem}\label{thm:approx-general}
Let $p,q>1$ be such that $\frac{1}{p}+\frac{1}{q}=1$ or let $p=\infty$ and $q=1$. Let $u$ be the entropy solution of \eqref{eq:scl} and let $v\in L^q(D\times [0,T])$. Assume that $f$ has Lipschitz constant $L_f$. Then it holds for any $\varphi\in W_0^{1,\infty}(D\times [0,T])$ that
\begin{equation}
    \cR(v,\varphi, c) \leq (1+3L_f)\abs{\varphi}_{W^{1,p}(D\times [0,T])} \norm{u-v}_{L^q(D\times[0,T])}. 
\end{equation}
\end{theorem}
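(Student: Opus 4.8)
The plan is to bound $\cR(v,\varphi,c)$ by comparing it with $\cR(u,\varphi,c)$ and exploiting that the latter is non-positive. Since $u$ is the entropy solution of \eqref{eq:scl}, Definition~\ref{def:entropy} gives $\cR(u,\varphi,c)\le 0$ (the boundary terms in \eqref{eq:kruzkov} drop because $\varphi\in W^{1,\infty}_0$), so it suffices to estimate the difference $\cR(v,\varphi,c)-\cR(u,\varphi,c)$ from above. By the definition \eqref{eq:R-def} of the Kruzhov entropy residual, this difference equals
\begin{equation}
  -\int_D\int_{[0,T]}\Bigl[\bigl(\abs{v-c}-\abs{u-c}\bigr)\partial_t\varphi + \bigl(Q[v;c]-Q[u;c]\bigr)\partial_x\varphi\Bigr]\,\d x\,\d t,
\end{equation}
and the whole problem reduces to estimating this integral in terms of $\norm{u-v}_{L^q}$.

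For that I would bound the integrand pointwise. The reverse triangle inequality gives $\bigl|\abs{v-c}-\abs{u-c}\bigr|\le\abs{u-v}$, and Lemma~\ref{lem:Q} gives $\abs{Q[v;c]-Q[u;c]}\le 3L_f\abs{u-v}$. Hence the above difference is at most $\int_D\int_{[0,T]}\abs{u-v}\,g\,\d x\,\d t$ with $g:=\abs{\partial_t\varphi}+3L_f\abs{\partial_x\varphi}$. Applying Hölder's inequality with the conjugate exponents $q$ and $p$ (and, in the limiting case, with $q=1$, $p=\infty$) to the product $\abs{u-v}\cdot g$ bounds this by $\norm{u-v}_{L^q(D\times[0,T])}\norm{g}_{L^p(D\times[0,T])}$, and the triangle inequality for the $L^p$ norm gives $\norm{g}_{L^p}\le \norm{\partial_t\varphi}_{L^p}+3L_f\norm{\partial_x\varphi}_{L^p}\le(1+3L_f)\abs{\varphi}_{W^{1,p}(D\times[0,T])}$, since each first-order partial derivative has $L^p$ norm at most the $W^{1,p}$ seminorm. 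Chaining these inequalities yields the claimed bound.

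There is no serious obstacle here: the estimate is essentially a duality argument, and the only genuine input is Lemma~\ref{lem:Q} (the Lipschitz-type continuity of $Q$ in its first slot), which is already in hand, together with the entropy inequality that lets us discard $\cR(u,\varphi,c)$. The two points worth being careful about are (i) that $\varphi\in W^{1,\infty}_0$, so that \eqref{eq:kruzkov} really reduces to $\cR(u,\varphi,c)\le 0$ with no boundary contribution, this inequality being invoked for the non-negative test functions relevant to the min--max problem \eqref{eq:minimax}; and (ii) the limiting pair $p=\infty$, $q=1$, which is handled identically since Hölder still applies. Everything else is the triangle inequality, Lemma~\ref{lem:Q}, and Hölder, in that order.
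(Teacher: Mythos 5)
Your proposal is correct and follows essentially the same route as the paper: decompose $\cR(v,\varphi,c)$ via $\cR(u,\varphi,c)\le 0$, bound the difference using the reverse triangle inequality and Lemma~\ref{lem:Q}, and conclude with H\"older. The only cosmetic difference is that you bound the integrand pointwise and apply H\"older once to the combined weight $\abs{\partial_t\varphi}+3L_f\abs{\partial_x\varphi}$, whereas the paper applies H\"older to the two terms separately before invoking Lemma~\ref{lem:Q} in $L^q$; the estimates are identical.
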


\begin{proof}
Let $c$ and $\varphi$ be arbitrary. In the following, we will write $L^p$ for $L^p(D\times[0,T])$. 
\begin{equation}
\begin{split}
\cR(v,\varphi, c) & = \cR(v,\varphi, c) - \cR(u,\varphi, c) + \cR(u,\varphi, c) \\
&\leq \cR(v,\varphi, c) - \cR(u,\varphi, c)\\
& = \int_D\int_{[0,T]} \left(\abs{u(x,t)-c}-\abs{v(x,t)-c}\right)\partial_t\varphi(x,t)+\left(Q[u(x,t); c]-Q[v(x,t); c]\right)\partial_x\varphi(x,t) dx dt\\
&\leq \norm{\partial_t\varphi}_{L^p} \norm{u-v}_{L^q}  + \norm{\partial_x\varphi}_{L^p} \norm{Q[u; c]-Q[v; c]}_{L^q}
\end{split}
\end{equation}
Using Lemma \ref{lem:Q} we find that $\norm{Q[u; c]-Q[v; c]}_{L^q} \leq 3L_f \norm{u-v}_{L^q}$ and therefore,
\begin{equation}
    \cR(v,\varphi, c) \leq  (1+3L_f)\abs{\varphi}_{W^{1,p}(D\times [0,T])} \norm{u-v}_{L^q(D\times[0,T])}. 
\end{equation}
\end{proof}

The above result implies that we have to show that the entropy solution of \eqref{eq:scl} can be approximated by neural networks \eqref{eq:dnn} for the entropy residual \eqref{eq:R-def} to be sufficiently small, within the class of neural networks. In the following, we focus on the hyperbolic tangent (tanh) activation function, 
\begin{equation}
    \sigma(x) := \tanh(x) = \frac{\exp(x)-\exp(-x)}{\exp(x)+\exp(-x)},
\end{equation}
for the sake of definiteness, while observing that extensions to other smooth activation functions can be similarly considered. We have the following result on the approximation of $BV$-functions by tanh neural networks, 
\begin{lemma}\label{lem:bv-nn}
For every $u\in BV([0,1]\times [0,T])$ and $\epsilon>0$ there is a tanh neural network $\uhat$ with two hidden layers and at most $O(\epsilon^{-2})$ neurons such that
\begin{equation}
    \norm{u-\uhat}_{L^1([0,1]\times [0,T])}\leq \epsilon.
\end{equation}
\end{lemma}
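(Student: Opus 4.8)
The plan is to approximate $u$ in $L^1$ first by a \emph{bounded} piecewise constant function on a uniform grid, and then to realise that piecewise constant function, up to a further small $L^1$ error, by an explicit two--hidden--layer $\tanh$ network. \emph{Step 1 (reduction to bounded $BV$).} Since $BV(D\times[0,T])\subset L^1(D\times[0,T])$, the truncations $u_M:=\max(-M,\min(M,u))$ converge to $u$ in $L^1$ as $M\to\infty$, and since $s\mapsto\max(-M,\min(M,s))$ is $1$--Lipschitz we have $\norm{u_M}_{TV}\le\norm{u}_{TV}$. Fix $M$ with $\norm{u-u_M}_{L^1}\le\epsilon/3$; from now on we approximate $u_M\in BV\cap L^\infty$, $\norm{u_M}_{L^\infty}\le M$. \emph{Step 2 (piecewise constant approximation).} Partition $D\times[0,T]$ into $N^2$ congruent rectangles $R_{ij}$ ($0\le i,j\le N-1$) of side lengths $1/N$ and $T/N$, with nodes $0=x_0<\dots<x_N=1$ and $0=t_0<\dots<t_N=T$, and set $\bar u:=\sum_{i,j}c_{ij}\1_{R_{ij}}$ with $c_{ij}$ the mean of $u_M$ over $R_{ij}$, so $\abs{c_{ij}}\le M$. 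The $BV$--Poincaré inequality on each rectangle together with $\sum_{i,j}\norm{u_M}_{TV(R_{ij})}\le\norm{u_M}_{TV}\le\norm{u}_{TV}$ (the interiors of the $R_{ij}$ being disjoint) gives $\norm{u_M-\bar u}_{L^1}\le C(T)\,N^{-1}\norm{u}_{TV}$; pick $N=O(\norm{u}_{TV}/\epsilon)$, independent of $M$, so that this is $\le\epsilon/3$.

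\emph{Step 3 (the network).} Let $\sigma=\tanh$. The first hidden layer outputs the $2(N-1)$ values $\sigma(\lambda(x-x_i))$ and $\sigma(\lambda(t-t_j))$ for $1\le i,j\le N-1$; affine combinations of these (admissible as inputs to the next layer) produce $\phi_i(x):=\tfrac12\big(\sigma(\lambda(x-x_i))-\sigma(\lambda(x-x_{i+1}))\big)\in[0,1]$, with the one--sided modifications $\phi_0=\tfrac12(1-\sigma(\lambda(x-x_1)))$ and $\phi_{N-1}=\tfrac12(1+\sigma(\lambda(x-x_{N-1})))$, and analogously $\psi_j(t)\in[0,1]$; these approximate $\1_{[x_i,x_{i+1}]}$ and $\1_{[t_j,t_{j+1}]}$ up to transition strips of width $O(\rho)$ around the nodes, with error $\le e^{-c\lambda\rho}$ outside the strips. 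The second hidden layer carries one neuron per cell, $\beta_{ij}(x,t):=\sigma\big(\mu(\phi_i(x)+\psi_j(t)-\tfrac32)\big)$: outside the transition strips $\phi_i+\psi_j\in\{0,1,2\}$ and equals $2$ exactly on $R_{ij}$, so $\tfrac12(1+\beta_{ij})$ approximates $\1_{R_{ij}}$ there with error $\le e^{-c\mu}$. The output neuron then forms $\uhat:=\sum_{i,j}c_{ij}\,\tfrac12(1+\beta_{ij})$. This network has two hidden layers and $2(N-1)+N^2+1=O(N^2)=O(\epsilon^{-2})$ neurons.

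\emph{Error of Step 3.} Using $\sum_{i,j}\1_{R_{ij}}=1$ a.e.\ and $\tfrac12(1+\beta_{ij})\in[0,1]$: on the complement of the transition strips exactly one cell is ``active'', whence $\abs{\uhat-\bar u}\le M(N^2+1)\,e^{-c\min(\lambda\rho,\mu)}$; on the strips, whose total measure is $O(N\rho)$, the crude bound $\abs{\uhat-\bar u}\le\sum_{i,j}\abs{c_{ij}}+M\le M(N^2+1)$ holds. Hence $\norm{\uhat-\bar u}_{L^1}\le C(T)\big(MN^2 e^{-c\min(\lambda\rho,\mu)}+MN^3\rho\big)$; choosing first $\rho$ small and then $\lambda,\mu$ large (depending on $M$ and $N$, which does not affect the neuron count) makes this $\le\epsilon/3$. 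Adding the three estimates gives $\norm{u-\uhat}_{L^1([0,1]\times[0,T])}\le\epsilon$.

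I expect the main obstacle to be the bookkeeping in Step 3: one must check that the ``sum--and--threshold'' device $\phi_i+\psi_j-\tfrac32$ genuinely emulates the tensor--product indicator $\1_{[x_i,x_{i+1}]}\otimes\1_{[t_j,t_{j+1}]}$, and that the $O(N^2)$ individually exponentially small contributions of the inactive cells, together with the contributions of the $O(N)$ thin transition strips, combine into something $\le\epsilon/3$ — this is exactly what dictates the order of the choices ($\rho$ first, then $\lambda$ and $\mu$). By contrast, the truncation of Step 1 and the $BV$--Poincaré estimate of Step 2 are routine. Alternatively, Step 3 could be replaced by invoking a known quantitative bound on the approximation of piecewise constant (or, equivalently, of characteristic functions of rectangles) by $\tanh$ neural networks.
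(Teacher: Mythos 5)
Your proposal is correct in substance, but it follows a genuinely different route from the paper. The paper's proof is essentially a two-line reduction: it first replaces $u$ by a smooth $\tilde u\in C^\infty\cap BV$ with $\norm{u-\tilde u}_{L^1}\lesssim \epsilon$ and $\norm{\nabla \tilde u}_{L^1}\lesssim \norm{u}_{BV}+\epsilon$ (a known smoothing result for $BV$ functions), and then invokes the quantitative tanh-approximation theorems of the authors' earlier works for functions with uniformly bounded $W^{1,1}$-norm, which directly yield two hidden layers and $O(\epsilon^{-2})$ neurons. You instead truncate, approximate by piecewise constants on an $N\times N$ grid via the $BV$--Poincar\'e inequality with $N=O(\epsilon^{-1})$, and then hand-build a two-hidden-layer tanh network whose second layer emulates the cell indicators through the sum-and-threshold gate $\sigma\big(\mu(\phi_i+\psi_j-\tfrac32)\big)$; this is exactly the alternative you mention at the end (a quantitative emulation of characteristic functions of rectangles), carried out explicitly. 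What your construction buys is self-containedness and an explicit architecture with a transparent neuron count $2(N-1)+N^2=O(\epsilon^{-2})$; what it costs is bookkeeping (truncation level $M$, strip width $\rho$, then $\lambda,\mu$) and very large weights, whereas the paper's route outsources all of this to existing approximation theory and keeps the argument short. One small point to tighten in your Step 3: the error $e^{-c\lambda\rho}$ in the argument $\phi_i+\psi_j-\tfrac32$ is amplified by the Lipschitz constant $\mu$ of $\sigma(\mu\,\cdot)$, so the exponential term is really of size $\mu e^{-c\lambda\rho}+e^{-c\mu}$ rather than $e^{-c\min(\lambda\rho,\mu)}$; this is harmless because you may fix $\mu$ first and choose $\lambda$ last (neither affects the neuron count), but the order of the choices should reflect it.
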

\begin{proof}
Write $\Omega = [0,1]\times [0,T]$. For every $u\in BV(\Omega)$ and $\epsilon>0$ there exists a function $\Tilde{u}\in C^\infty(\Omega)\cap BV(\Omega)$ such that $\norm{u-\Tilde{u}}_{L^1(\Omega)}\lesssim \epsilon$ and $\norm{\nabla \Tilde{u}}_{L^1(\Omega)} \lesssim \norm{u}_{BV(\Omega)}+\epsilon$ \cite{bartels2012total}. 
Then use the approximation techniques of \cite{deryck2021approximation, deryck2021navierstokes} and the fact that $\norm{\Tilde{u}}_{W^{1,1}(\Omega)}$ can be uniformly bounded in $\epsilon$ to find the existence of a tanh neural network $\uhat$ with two hidden layers and at most $O(\epsilon^{-2})$ neurons that satisfies the mentioned error estimate. 
\end{proof}

If one additionally knows that $u$ is piecewise smooth, for instance as in the solutions of convex scalar conservation laws \cite{holden2015front}, then one can use the results of \cite{petersen2018optimal} to obtain the following result. 

\begin{lemma}\label{lem:pw-nn}
Let $m,n\in\N$, $1\leq q<\infty$ and let $u:[0,1]\times [0,T]\to \R$ be a function that is piecewise $C^m$ smooth and with a $C^n$ smooth discontinuity surface. Then there is a tanh neural network $\uhat$ with at most three hidden layers and $\bigO(\epsilon^{-2/m}+\epsilon^{-q/n})$ neurons such that
\begin{equation}
    \norm{u-\uhat}_{L^q([0,1]\times [0,T])}\leq \epsilon.
\end{equation}
\end{lemma}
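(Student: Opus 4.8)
The plan is to reduce the statement to two separate approximation problems---one for the smooth pieces of $u$ and one for the discontinuity surface---and then invoke known constructive results for $\tanh$ networks to handle each. The key structural idea is that a piecewise $C^m$ function with a $C^n$ discontinuity surface $\Gamma$ can be written as $u = u_1 \mathbbm{1}_{\Omega_1} + u_2 \mathbbm{1}_{\Omega_2}$, where $\Omega = [0,1]\times[0,T]$ is partitioned (up to $\Gamma$) into $\Omega_1,\Omega_2$, and $u_1,u_2$ extend to $C^m$ functions on all of $\Omega$ (using a Whitney-type extension, whose cost does not affect the asymptotics). So the real task is to approximate (i) the $C^m$ pieces $u_i$ and (ii) the indicator $\mathbbm{1}_{\Omega_i}$, equivalently the characteristic function of the region cut out by the $C^n$ surface $\Gamma$.

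For step (i), I would cite the $\tanh$ approximation results used already in Lemma~\ref{lem:bv-nn} (from \cite{deryck2021approximation, deryck2021navierstokes}, themselves building on earlier constructive approximation): a $C^m$ function on a box is approximated in $L^\infty$ (hence $L^q$) to accuracy $\epsilon$ by a $\tanh$ network with two hidden layers and $\bigO(\epsilon^{-2/m})$ neurons (the exponent $2$ being the input dimension and $m$ the smoothness). For step (ii), the natural route is: since $\Gamma$ is a $C^n$ graph, say $\{x = \gamma(t)\}$ after a local relabeling of coordinates, the indicator $\mathbbm{1}_{\Omega_1} = \mathbbm{1}_{\{x < \gamma(t)\}} = H(\gamma(t)-x)$, where $H$ is the Heaviside step. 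One approximates $\gamma \in C^n$ by a $\tanh$ network of size $\bigO(\epsilon^{-1/n})$ (one-dimensional domain, so exponent $1$), and approximates $H$ by a steep $\tanh$; composing, one gets a $\tanh$ network of one extra layer. The $L^q$ error of approximating $\mathbbm{1}_{\{x<\gamma\}}$ by $\mathbbm{1}_{\{x<\tilde\gamma\}}$ is of order $\|\gamma-\tilde\gamma\|_{L^\infty}^{1/q}$ (the area of the symmetric difference), forcing one to take the graph accuracy $\sim \epsilon^{q}$, which is exactly why the neuron count from this piece becomes $\bigO((\epsilon^q)^{-1/n}) = \bigO(\epsilon^{-q/n})$; the smoothing of $H$ contributes a further $L^q$ error that can be absorbed by choosing the $\tanh$ slope large enough (this costs only a bounded number of extra neurons, since a sharp indicator of a half-space is $\sigma(\lambda(\cdot))$). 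This matches the claimed neuron count $\bigO(\epsilon^{-2/m} + \epsilon^{-q/n})$ and the "at most three hidden layers" (two for the smooth pieces / the $\gamma$-network, plus the multiplicative combination and the step). Rather than re-deriving all of this, I would lean on \cite{petersen2018optimal}, as the authors suggest: that paper establishes essentially this statement for ReLU networks, and the passage to $\tanh$ is routine via the fact that a $\tanh$ network of bounded size and depth approximates $x\mapsto\max(0,x)$ (and bilinear products) to arbitrary accuracy on compacts (see \cite{deryck2021approximation}); one composes.

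The remaining routine glue: to assemble $u \approx u_1\cdot\Phi_1 + u_2\cdot\Phi_2$ where $\Phi_i$ are the smoothed/networked indicators, one needs a $\tanh$ network that multiplies two inputs; such a network of $\bigO(1)$ neurons approximating the product on any bounded box with any prescribed accuracy is standard (\cite{deryck2021approximation}), and since $u_1,u_2$ are bounded on $\Omega$ and $\Phi_i\in[0,1]$, the resulting $L^q$ errors are controlled by a triangle inequality: $\|u - (u_1\Phi_1 + u_2\Phi_2)\|_{L^q} \le \|u_1 - \uhat_1\|_{L^q} + \|u_2-\uhat_2\|_{L^q} + \|u_1\|_\infty\|\mathbbm{1}_{\Omega_1}-\Phi_1\|_{L^q} + \|u_2\|_\infty\|\mathbbm{1}_{\Omega_2}-\Phi_2\|_{L^q} + (\text{product errors})$, each term $\lesssim \epsilon$ after rescaling constants. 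Keeping the total depth at three hidden layers requires running the smooth-piece subnetworks and the $\gamma$/step subnetwork in parallel and reserving the last layer(s) for the products and the sum, which is a bookkeeping exercise with the concatenation/parallelization lemmas for $\tanh$ networks.

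The main obstacle I anticipate is not any single estimate but the careful handling of the discontinuity-surface term: making precise the local graph representation of a general $C^n$ hypersurface in the plane (patching, and ensuring the partition of $\Omega$ into finitely many pieces on which $\Gamma$ is a graph), quantifying the $L^q$ cost $\sim \|\gamma-\tilde\gamma\|_\infty^{1/q}$ of the indicator mismatch, and controlling the error from replacing the Heaviside by a $\tanh$ of large but finite slope near $\Gamma$ (where $u$ jumps, so the pointwise error there is $O(1)$ but on a set of measure $\to 0$). Since $\Omega$ is two-dimensional the surface is a curve and only finitely many coordinate patches are needed, so this is manageable; nonetheless it is where the exponent $q/n$ genuinely enters and where one must be most careful to get the stated rate rather than a worse one. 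As with Lemmas~\ref{lem:bv-nn} and~\ref{lem:pw-nn}'s predecessor, I would ultimately present this as a short proof citing \cite{petersen2018optimal} for the ReLU version together with \cite{deryck2021approximation} for the ReLU-to-$\tanh$ emulation, rather than reproducing the full construction.
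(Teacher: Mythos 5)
Your proposal follows essentially the same route as the paper: both adapt the piecewise-smooth construction of \cite[Appendix A]{petersen2018optimal}, replacing the ReLU-specific ingredients by tanh analogues --- a steep tanh for the Heaviside, the tanh multiplication and smooth-approximation results of \cite{deryck2021approximation} for the products and the $C^m$ pieces --- and your accounting of the $\epsilon^{-2/m}$ and $\epsilon^{-q/n}$ contributions (graph accuracy $\sim\epsilon^{q}$ because the indicator mismatch costs $\norm{\gamma-\tilde\gamma}_\infty^{1/q}$ in $L^q$) matches the stated rate. The only cosmetic difference is that the paper swaps out the three specific lemmas of \cite{petersen2018optimal} directly rather than emulating a ReLU network by tanh units, which avoids the error-propagation bookkeeping your emulation variant would entail.
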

\begin{proof}
The proof follows the lines of \cite[Appendix A]{petersen2018optimal} with a few adaptations. We approximate the Heaviside function by the function $x\mapsto \frac{1}{2}(\sigma(\beta x)+1)$, where $\beta = \frac{1}{2\epsilon}\ln(1/\epsilon)$ for some $\epsilon>0$. 
This replaces \cite[Lemma A.2]{petersen2018optimal}. 
Moreover, we replace \cite[Lemma A.3]{petersen2018optimal}, which discusses approximation rates for the multiplication operator, by \cite[Corollary 3.7]{deryck2021approximation} and we replace \cite[Lemma A.9]{petersen2018optimal}, which discusses approximation rates for smooth functions, by \cite[Theorem 5.1]{deryck2021approximation}. 
\end{proof}
Finally, from Lemma \ref{lem:Phi}, stated and proved in the Appendix, we find that one has to consider test functions $\varphi$ that might grow as $\abs{\varphi}_{W^{1,p}}\sim\epsilon^{-3(1+2(p-1)/p)}$. Consequently, we will need to use Lemma \ref{lem:bv-nn} with $\epsilon \leftarrow \epsilon^{-4+6(p-1)/p}$, leading to the following corollary.

\begin{corollary}
\label{cor:er}
Assume the setting of Lemma \ref{lem:pw-nn}, assume that $m q > 2n$ and let $p\in[1,\infty]$ be such that $\frac{1}{p}+\frac{1}{q}=1$. There is a fixed depth tanh neural network $\uhat$ with size $\bigO(\epsilon^{-(4q+6)n})$ such that
\begin{equation}
    \max_{c\in \cC}\sup_{\varphi\in\overline{\Phi}_\epsilon}\cR(\uhat,\varphi, c)+  \lambda_{sb}\norm{r_{sb}[\uhat]}^2_{L^2(\partial D\times [0,T])} + \lambda_{tb}\norm{r_{tb}[\uhat]}^2_{L^2(D)} \leq \epsilon, 
\end{equation}
where $\overline{\Phi}_\epsilon = \{\varphi\: :\: \abs{\varphi}_{W^{1,p}}=\bigO(\epsilon^{-3(1+2(p-1)/p))}\}$.
\end{corollary}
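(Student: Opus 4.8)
The plan is to chain together the three ingredients already in hand: the residual estimate of Theorem~\ref{thm:approx-general}, the size-explicit approximation result of Lemma~\ref{lem:pw-nn}, and the growth rate $\abs{\varphi}_{W^{1,p}}=\bigO(\epsilon^{-3(1+2(p-1)/p)})$ built into the definition of $\overline{\Phi}_\epsilon$ (which comes from Lemma~\ref{lem:Phi}). The crucial observation is that the bound in Theorem~\ref{thm:approx-general} is \emph{uniform in $c$} and depends on $\varphi$ only through the single quantity $\abs{\varphi}_{W^{1,p}}$; hence for every $c\in\cC$ and every $\varphi\in\overline{\Phi}_\epsilon$ one gets $\cR(\uhat,\varphi,c)\le (1+3L_f)\abs{\varphi}_{W^{1,p}}\norm{u-\uhat}_{L^q}\le C\,\epsilon^{-3(1+2(p-1)/p)}\norm{u-\uhat}_{L^q}$, with $C$ absorbing $1+3L_f$ and the implicit constant in $\overline{\Phi}_\epsilon$. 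So it suffices to produce a fixed-depth tanh network $\uhat$ with $\norm{u-\uhat}_{L^q}\le\delta$, where $\delta := \tfrac{\epsilon}{3C}\,\epsilon^{\,3(1+2(p-1)/p)}$, together with small initial/boundary residuals; using the conjugacy relation $(p-1)/p=1/q$ this target is $\delta\sim\epsilon^{\,4+6/q}=\epsilon^{(4q+6)/q}$.

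For such a network I would invoke Lemma~\ref{lem:pw-nn} with target accuracy $\delta$: since $u$ is piecewise $C^m$ with a $C^n$-smooth discontinuity surface, there is a tanh network $\uhat$ of at most three hidden layers — hence of fixed depth — with $\bigO(\delta^{-2/m}+\delta^{-q/n})$ neurons and $\norm{u-\uhat}_{L^q}\le\delta$. Substituting $\delta\sim\epsilon^{(4q+6)/q}$ turns the two terms into $\epsilon^{-2(4q+6)/(qm)}$ and $\epsilon^{-(4q+6)/n}$, and the standing hypothesis $mq>2n$ is exactly what forces $2(4q+6)/(qm)<(4q+6)/n$, so the jump-resolution term dominates and collecting the powers of $\epsilon$ yields the asserted network size. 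It remains to control the trace terms $\norm{r_{sb}[\uhat]}_{L^2(\partial D\times[0,T])}$ and $\norm{r_{tb}[\uhat]}_{L^2(D)}$: for the periodic boundary conditions considered here one may take $\uhat$ periodic in $x$ so that $r_{sb}\equiv 0$ (or, since $g$ is the trace of $u$, a uniform bulk approximation already controls it), while for the temporal boundary one notes that $u_0=u(\cdot,0)$ is, in one space dimension, piecewise $C^m$ with a finite $C^n$ jump set, so the construction underlying Lemma~\ref{lem:pw-nn} can be arranged to approximate $u$ down to $t=0$ — or one adds a low-complexity correction approximating $u_0$ — keeping $\norm{r_{tb}[\uhat]}_{L^2(D)}\le\delta$ at the same polynomial cost. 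Splitting the budget $\epsilon$ into three equal pieces then gives the stated inequality.

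The genuinely delicate part is the exponent bookkeeping: the worst-case growth of the maximizing test functions (Lemma~\ref{lem:Phi}) feeds through Theorem~\ref{thm:approx-general} into the required $L^q$ accuracy $\delta$, which feeds through Lemma~\ref{lem:pw-nn} into the neuron count, and one must verify that $mq>2n$ is precisely the threshold at which the jump-resolution cost overtakes the smooth-part cost. A secondary technical point is ensuring that the single fixed-depth network which is cheap in the bulk simultaneously has controlled traces on $\partial D$ and at $\{t=0\}$; this is not automatic from $L^q(D\times[0,T])$ control alone, but is available from the pointwise (away-from-jump) nature of the constructions in \cite{petersen2018optimal,deryck2021approximation} on which Lemma~\ref{lem:pw-nn} rests, and can in any case be enforced by an extra additive correction of negligible size.
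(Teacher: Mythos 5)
Your proposal is correct and follows essentially the same route as the paper: bound $\cR(\uhat,\varphi,c)$ via Theorem \ref{thm:approx-general} uniformly in $c$ and through $\abs{\varphi}_{W^{1,p}}=\bigO(\epsilon^{-3(1+2(p-1)/p)})$, demand $L^q$ accuracy $\delta\sim\epsilon^{(4q+6)/q}$, feed this into Lemma \ref{lem:pw-nn}, use $mq>2n$ to identify the dominant term, and control $r_{sb},r_{tb}$ through the pointwise (Taylor-polynomial/$C^0$) nature of the construction, exactly as the paper does. Note only that your bookkeeping yields size $\bigO(\epsilon^{-(4q+6)/n})$, which agrees with the exponent in the paper's own proof; the exponent $-(4q+6)n$ displayed in the corollary statement appears to be a typographical slip rather than a defect of your argument.
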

\begin{proof}
We find that $\uhat$ will need to have a size of $\bigO(\epsilon^{-(4q+6)/\beta})$ such that $\max_{c\in \cC}\sup_{\varphi\in\Phi_\epsilon}\cR(\uhat,\varphi, c) \leq \epsilon/3$, where we used that $q = p/(p-1)$. Since in the proof of Lemma \ref{lem:pw-nn} the network $\uhat$ is constructed as an approximation of piecewise Taylor polynomials, the spatial and temporal boundary residuals ($r_{sb}$ and $r_{tb}$) are automatically minimized as well, given that Taylor polynomials provide approximations in $C^0$-norm. 
\end{proof}
\subsection{Bounds on Error in terms of Residuals}
\label{sec:32}
In the previous subsection (Corollary \ref{cor:er}), we showed that the residuals appearing in the loss function \eqref{eq:minimax} can be made arbitrarily small. Hence, this loss function constitutes a suitable target for the training process. However, as argued in \cite{deryck2021approximation}, the smallness of residual, does not necessarily imply that the total error, with respect to PINNs, can be made arbitrarily small as the optimization process might converge to local saddle-points of \eqref{eq:minimax}. Hence, we need to bound the error in terms of the corresponding residuals. We obtain such bounds in this section. 

To start with, we define the following set of test functions, 
\begin{definition}\label{def:varphi}
Let for any $(y,s)\in [0,1]\times[0,T]$ and $\epsilon>0$ the function $\overline{\varphi}_\epsilon^{y,s}:[0,1]\times[0,T]\to [0,\infty)$ be given by, 
\begin{equation}\label{eq:varphi}
\begin{split}
    \overline{\varphi}^{y,s}_\epsilon(x,t) &= \chi_\epsilon\left(\frac{t+s}{2}\right)\rho_\epsilon(x-y)\rho_\epsilon(t-s),\\
    \chi_\epsilon(t) &= \frac{1}{2\sigma(\alpha\epsilon)}(\sigma(\alpha( t-2\epsilon))-\sigma(\alpha (t-T+2\epsilon))),  \qquad \alpha = 3\ln(1/\epsilon)/\epsilon,\\
    \rho_\epsilon(x) &= \frac{\sigma(\beta(x+{\epsilon^6}))-\sigma(\beta(x-{\epsilon^6}))}{2\epsilon^6}, \qquad \beta = 9\ln(1/\epsilon)/\epsilon^3, 
\end{split}
\end{equation}
for $(x,t)\in [0,1]\times[0,T]$. Furthermore we define the set $\Phi_\epsilon$ by, 
\begin{equation}
    \Phi_\epsilon = \left\{\overline{\varphi}^{y,s}_\epsilon\::\: (y,s)\in [0,1]\times[0,T]\right\}. 
\end{equation}
\end{definition} 
Now, we will modify the famous doubling of variables argument of Kruzkhov to obtain the following bound on the $L^1$-error with \emph{wPINNs},
\begin{theorem}\label{thm:stability}
Assume that $u$ is the piecewise smooth entropy solution of \eqref{eq:scl} with essential range $\cC$ and that $u(0,t)=u(1,t)$ for all $t\in[0,T]$. 
There is a constant $C>0$ such that for every $\epsilon>0$ and $v\in C^1(D\times[0,T])$, it holds that 
\begin{equation}\label{eq:stability}
    \begin{split}
        \int_0^1 \abs{v(x,T)-u(x,T)}dx &\leq C\bigg(\int_0^1 \abs{v(x,0)-u(x,0)}dx + \max_{c\in\cC, \varphi\in\Phi_\epsilon}\cR(v,\varphi, c) \\
        &\quad + (1+\norm{v}_{C^1})\ln(1/\epsilon)^3\epsilon + \int_0^T \abs{v(1,t)-v(0,t)}dt\bigg).
    \end{split}
\end{equation}
\end{theorem}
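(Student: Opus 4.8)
The plan is to carry out Kruzkhov's doubling-of-variables argument with the entropy solution $u$ in its standard role, but with the approximate solution $v$ — which is only $C^1$ and need not satisfy any entropy inequality — replaced by the residual bound $\cR(v,\varphi,c)\le \max_{c\in\cC,\,\varphi\in\Phi_\epsilon}\cR(v,\varphi,c)=:\res$. The test-function family in Definition \ref{def:varphi} is engineered exactly for this. Writing $\psi(x,t,y,s):=\overline{\varphi}_\epsilon^{y,s}(x,t)=\chi_\epsilon(\tfrac{t+s}{2})\rho_\epsilon(x-y)\rho_\epsilon(t-s)$, I would first record the following elementary facts: $\psi\ge0$; $\psi$ is symmetric under $(x,t)\leftrightarrow(y,s)$ because $\rho_\epsilon$ is even; $(\partial_x+\partial_y)\psi\equiv 0$; $(\partial_t+\partial_s)\psi=\chi_\epsilon'(\tfrac{t+s}{2})\rho_\epsilon(x-y)\rho_\epsilon(t-s)$, where $\chi_\epsilon'$ is a difference of two approximate identities localised at $t=2\epsilon$ and $t=T-2\epsilon$ and $\rho_\epsilon$ is itself an approximate identity of scale $1/\beta\sim\epsilon^3/\ln(1/\epsilon)$; and $\psi=\mathcal{O}(\epsilon^6)$ whenever $t\in\{0,T\}$ or $s\in\{0,T\}$, because $\chi_\epsilon$ is exponentially small near the temporal boundary. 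Since $u$ is piecewise smooth, $\cC$ is compact, $u(\cdot,t)\in BV$, and $u$ enjoys the $L^1$-in-time Lipschitz bound of Theorem \ref{thm:entropy-sol}; I will use these throughout.

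I would then produce two inequalities. For the first, I use $\psi(x,t,\cdot,\cdot)$ as a test function in $(y,s)$ together with the constant $c=v(x,t)$ in the Kruzkhov inequality \eqref{eq:kruzkov} for $u$ — working on the spatial torus, so that periodicity of $u$ costs nothing and the rapid decay of $\psi$ makes the reduction to a compactly supported test function routine — and integrate over $(x,t)\in D\times[0,T]$. For the second, since $u(y,s)\in\cC$ for a.e.\ $(y,s)$, I insert $\varphi=\overline{\varphi}_\epsilon^{y,s}$ and $c=u(y,s)$ into \eqref{eq:R-def} and the bound $\cR(v,\overline{\varphi}_\epsilon^{y,s},u(y,s))\le\res$, and integrate over $(y,s)\in D\times[0,T]$; since $v$ need not be periodic, periodizing it here contributes a term at the spatial seam which, via Lemma \ref{lem:Q} applied to the jump $v(1,t)-v(0,t)$, is bounded by $C\int_0^T|v(1,t)-v(0,t)|\,dt$. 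Adding the two inequalities, the flux terms cancel identically by $(\partial_x+\partial_y)\psi\equiv0$ together with the symmetry $Q[a;b]=Q[b;a]$, the genuine $t=0,T$ boundary terms of \eqref{eq:kruzkov} are $\mathcal{O}(\epsilon^6)$, and the surviving inequality reads
\begin{align*}
&\iiiint_{(D\times[0,T])^2} |u(y,s)-v(x,t)|\,\chi_\epsilon'\!\left(\tfrac{t+s}{2}\right)\rho_\epsilon(x-y)\rho_\epsilon(t-s)\,dx\,dt\,dy\,ds\\
&\qquad\ge\ -T\res \;-\; C\!\int_0^T\!|v(1,t)-v(0,t)|\,dt \;-\; \mathcal{O}(\epsilon^6).
\end{align*}

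It remains to evaluate that quadruple integral. Because $\rho_\epsilon$ concentrates $(x,t)$ onto $(y,s)$ and $\chi_\epsilon'$ is a difference of approximate identities at $2\epsilon$ and $T-2\epsilon$, it equals $\kappa\big(\int_0^1|u(x,2\epsilon)-v(x,2\epsilon)|\,dx-\int_0^1|u(x,T-2\epsilon)-v(x,T-2\epsilon)|\,dx\big)$ for an explicit $\kappa>0$, up to an error which I would control by replacing $u(y,s)$ by $u(x,t)$ using the $BV$ and $L^1$-Lipschitz-in-time bounds for $u$, and $v(x,t)$ by neighbouring values using $\norm{v}_{C^1}$, while carefully tracking the scales $1/\alpha\sim\epsilon/\ln(1/\epsilon)$, $1/\beta\sim\epsilon^3/\ln(1/\epsilon)$ and the non-compact tails of $\chi_\epsilon,\rho_\epsilon$; the accumulated logarithmic losses yield an error of size $C(1+\norm{v}_{C^1})\ln(1/\epsilon)^3\epsilon$. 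Transferring the two times $2\epsilon\mapsto0$, $T-2\epsilon\mapsto T$ by the same two regularity estimates (an extra $C(1+\norm{v}_{C^1})\epsilon$), dividing by $\kappa$, rearranging, and using $u(\cdot,0)=u_0$, one obtains \eqref{eq:stability}.

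The step I expect to be the main obstacle is this last error analysis: getting the clean $\ln(1/\epsilon)^3\epsilon$ bound means one cannot appeal to an idealized Dirac sequence, but must balance the small-but-nonzero tails of the explicit $\tanh$-based mollifiers $\chi_\epsilon,\rho_\epsilon$ against their heights and against the moduli of continuity of $u$ and $v$. A secondary, more bookkeeping-type difficulty is making the periodic-boundary manoeuvres rigorous — legitimately applying the entropy inequality with a test function that is not compactly supported in time, and accounting, through the periodization of the non-periodic wPINN $v$, for the spatial seam; this is precisely the mechanism that generates the term $\int_0^T|v(1,t)-v(0,t)|\,dt$.
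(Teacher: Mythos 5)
Your proposal follows essentially the same doubling-of-variables argument as the paper's proof: apply the Kruzkhov inequality for $u$ with the constant taken as $v$ at the doubled point, apply the residual bound $\cR(v,\overline{\varphi}^{y,s}_\epsilon,u(y,s))\leq\max_{c\in\cC,\varphi\in\Phi_\epsilon}\cR(v,\varphi,c)$, cancel the flux terms exactly via $(\partial_x+\partial_y)\overline{\varphi}^{y,s}_\epsilon=0$ and the symmetry of $Q$, collapse the surviving term (with $(\partial_t+\partial_s)\overline{\varphi}^{y,s}_\epsilon=\chi'_\epsilon\rho_\epsilon\rho_\epsilon$) to the $L^1$ differences at times $2\epsilon$ and $T-2\epsilon$, and transfer to $t=0,T$ using the time continuity of $u$ and $\norm{v}_{C^1}$ --- exactly the structure the paper implements through Lemmas \ref{lem:chi}--\ref{lem:partial-int-x}. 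The only deviations are bookkeeping ones: you attribute the seam term $\int_0^T\abs{v(1,t)-v(0,t)}dt$ to periodization at the spatial seam whereas the paper obtains it from the explicit $x$-boundary terms in Lemma \ref{lem:partial-int-x}, and you correctly identify, but do not carry out, the quantitative tanh-mollifier tail estimates that produce the $\ln(1/\epsilon)^3\epsilon$ error, which the paper executes in the appendix.
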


\begin{proof}
Since $u$ is an entropy solution, inequality \eqref{eq:kruzkov} in Definition \ref{def:entropy} is satisfied in particular for $c\leftarrow v(y,s)$ for any $(y,s)\in D\times [0,T]$. We now apply Lemma \ref{lem:partial-int-t} with $z\leftarrow \abs{u(x,t)-v(y,s)}$ to obtain that,
\begin{equation}\label{eq:A-step1a}
\begin{split}
&- \int_0^1\int_0^T \int_0^1\int_0^T \abs{u(x,t)-v(y,s)} \partial_t \overline{\varphi}_\epsilon^{y,s}(x,t) dtdxdsdy \\
&\quad \leq \int_0^1\int_0^T \int_0^1\int_0^T \partial_t \abs{u(x,t)-v(y,s)}  \overline{\varphi}_\epsilon^{y,s}(x,t) dtdxdsdy +CB\epsilon,
\end{split}
\end{equation}
and Lemma \ref{lem:partial-int-x} to obtain that,
\begin{equation}\label{eq:A-step1b}
\begin{split}
&- \int_0^1\int_0^T \int_0^1\int_0^T Q[u(x,t);v(y,s)] \partial_x \overline{\varphi}_\epsilon^{y,s}(x,t) dtdxdsdy \\
&\quad \leq \int_0^1\int_0^T \int_0^1\int_0^T \partial_x Q[u(x,t);v(y,s)]  \overline{\varphi}_\epsilon^{y,s}(x,t) dtdxdsdy\\
&\qquad+12L_f \int_0^T \abs{v(1,t)-v(0,t)}dt+CL_f\norm{v_x}_{\infty} (1-\ln(\epsilon))\epsilon, 
\end{split}
\end{equation}

Next, we observe that for $v\in C^1(D\times [0,T])$ and $(y,s),(x,t)\in D\times [0,T]$ it holds that,
\begin{equation}\label{eq:A-step1}
    \cR(v,\overline{\varphi}_\epsilon^{x,t}, u(x,t))\leq \max_{c, \varphi}\cR(v,\varphi, c),
\end{equation}
where $\cC$ is the essential range of $u$. As a result, we find using the symmetry of $Q$ that, 
\begin{equation}\label{eq:A-step2}
    \begin{split}
        &- \int_0^1\int_0^T \int_0^1\int_0^T \left(\abs{u(x,t)-v(y,s)} \partial_s \overline{\varphi}_\epsilon^{x,t}(y,s)+Q[u(x,t);v(y,s)] \partial_y \overline{\varphi}_\epsilon^{x,t}(y,s)\right) dsdydtdx\\
        &\qquad\leq T \max_{c, \varphi}\cR(v,\varphi, c).
    \end{split}
\end{equation}
Summing \eqref{eq:A-step1a}, \eqref{eq:A-step1b} and \eqref{eq:A-step2} and using Fubini's theorem and the fact that $\overline{\varphi}_\epsilon^{x,t}(y,s) = \overline{\varphi}_\epsilon^{y,s}(x,t)$ leads us to,
\begin{equation}\label{eq:A-step3}
\begin{split}
&- \int_D\int_{[0,T]}\int_D\int_{[0,T]} \abs{v(x,t)-u(y,s)}(\partial_t \overline{\varphi}_\epsilon^{y,s}(x,t)+\partial_s \overline{\varphi}_\epsilon^{y,s}(x,t))  dx dt dy ds \quad {\scriptstyle =:\:T_1}\\
    &\quad- \int_D\int_{[0,T]}\int_D\int_{[0,T]} Q[v(x,t); u(y,s)](\partial_x \overline{\varphi}_\epsilon^{y,s}(x,t)+\partial_y \overline{\varphi}_\epsilon^{y,s}(x,t)) dx dt dy ds \quad {\scriptstyle =:\:T_2}\\
    &\leq T\max_{c\in\cC, \varphi\in\Phi}\cR(v,\varphi, c) + C(1+\norm{v_x}_\infty)(1-\ln(\epsilon))\epsilon + C\int_0^T \abs{v(1,t)-v(0,t)}dt.
\end{split}
\end{equation}
One can observe that $T_2=0$ since $\partial_x \overline{\varphi}_\epsilon^{y,s} = -\partial_y \overline{\varphi}_\epsilon^{y,s}$. In what follows, we will prove that $T_1$ is a good approximation of $ \norm{v(T)-u(T)}_{L^1}- \norm{v(0)-u(0)}_{L^1}$. 
By replacing the absolute value $\abs{\cdot}$ in $T_1$ by its smoothened counterpart $\abs{\cdot}_\eta$, $\eta\geq0$, (as defined in Lemma \ref{lem:smooth-abs}) and by using the definition of $\overline{\varphi}_\epsilon^{y,s}(x,t)$ we find that $T_1=T_1^0$ where,
\begin{equation}
    T_1^\eta := - \int_D\int_{[0,T]}\int_D\int_{[0,T]} \abs{v(y,s)-u(x,t)}_\eta \chi'_\epsilon\left(\frac{t+s}{2}\right)\rho_\epsilon(x-y)\rho_\epsilon(t-s)  dx dt dy ds. 
\end{equation}
Using Lemma \ref{lem:smooth-abs}, we find for $\eta>0$ that $\abs{T_1-T_1^\eta}\leq C\eta$
for some absolute constant $C>0$.
For every $x,y,t$, we now want to apply Lemma \ref{lem:rho} with $f\leftarrow(s\mapsto \abs{v(y,s)-u(x,t)}_\eta \chi'_\epsilon\left(\frac{t+s}{2}\right))$. Define $\omega(t) = \sigma(\beta(t-\max\{0,t-\epsilon^3\}))-\sigma(\beta(t-\min\{b,t+\epsilon^3\}))$. We find using Lemma \ref{lem:smooth-abs} that
\begin{equation}
\begin{split}
    \abs{T_1^\eta-\Tilde{T}_1^\eta}&:=\abs{T_1^\eta + \int_D\int_{[0,T]}\int_D \abs{v(y,t)-u(x,t)}_\eta \chi'_\epsilon\left(t\right)\rho_\epsilon(x-y)\omega(t) dx dy dt}\\
    & \leq 10(4B\alpha^2+4\norm{v_t}_\infty)(T-3\ln(\epsilon))\epsilon^3.
\end{split}
\end{equation}
Next, for every $x,t$, we now want to apply Lemma \ref{lem:rho} with $f\leftarrow(y\mapsto \abs{v(y,t)-u(x,t)}_\eta )$. We find using Lemma \ref{lem:smooth-abs} that, 
\begin{equation}
    \begin{split}
    \abs{\Tilde{T}_1^\eta-\hat{T}_1^\eta(D)}&:=\abs{\Tilde{T}_1^\eta + \int_D\int_{[0,T]} \abs{v(x,t)-u(x,t)}_\eta \omega(x)\omega(t) \chi'_\epsilon\left(t\right) dx dt}\\
    & \leq 10\cdot 4\norm{v_t}_\infty(1-3\ln(\epsilon))\epsilon^3. 
\end{split}
\end{equation}
Let $A = \{x\in D \vert \text{ the function } [0,3\epsilon]\cup[T-3\epsilon,T]\to\R: t\mapsto u(x,t) \text{ is cont. diff.}\}$. As $u$ is piecewise smooth, we find for some constant $C>0$ that $\abs{\hat{T}_1^\eta(D)-\hat{T}_1^\eta(A)}\leq C\epsilon$.
We can now use Lemma \ref{lem:chi} for every $x\in A$ with $f\leftarrow (t\mapsto \abs{v(x,t)-u(x,t)}_\eta \omega(t))$. Let $B = [\epsilon, 3\epsilon]\cup[T-3\epsilon, T-\epsilon]$. Note that $\omega(t)$ is constant on $B$ under the assumption that $\epsilon<1$ and therefore $\epsilon^3<\epsilon$. This gives us, writing $T_\epsilon := T-2\epsilon$, 
\begin{equation}
\begin{split}
&\abs{\hat{T}_1^\eta(A)-\cT_1^\eta(A)}:=\\
&=\abs{\hat{T}_1^\eta(A)-\int_A\abs{v(x,T_\epsilon)-u(y,T_\epsilon)}_\eta \omega(x)\omega(T_\epsilon)dx+\int_A\abs{v(x,2\epsilon)-u(y,2\epsilon)}_\eta \omega(x)\omega(2\epsilon)dx}\\
&\leq C(T\abs{\cC}+(\norm{v_t}_\infty+\abs{u}_{W^{1,\infty}(B)})\ln(1/\epsilon))\frac{\epsilon}{1-\epsilon}.
\end{split}
\end{equation}
We find also that $\abs{\cT_1^\eta(A)-\cT_1^\eta(D)}\leq C\epsilon$.
Next, using the time continuity of entropy solutions (Lemma \ref{thm:entropy-sol}),
\begin{equation}
\begin{split}
    \norm{v(T)-u(T)}_{L^1} &\leq \norm{v(T)-v(T_\epsilon)}_{L^1}+\norm{v(T_\epsilon)-u(T_\epsilon)}_{L^1}+\norm{u(T_\epsilon)-u(T)}_{L^1}\\
    &\leq 2\abs{D}\norm{v_t}_\infty \epsilon + \norm{v(T_\epsilon)-u(T_\epsilon)}_{L^1} +2\epsilon M \norm{u_0}_{TV},
\end{split}
\end{equation}
where $M = \max_{c\in \cC}\abs{f'(c)}$.
Similarly, 
\begin{equation}
    \begin{split}
        \norm{v(2\epsilon)-u(2\epsilon)}_{L^1} &\leq \norm{v(2\epsilon)-v(0)}_{L^1}+\norm{v(0)-u(0)}_{L^1}+\norm{u(0)-u(2\epsilon)}_{L^1}\\
        &\leq 2\abs{D}\norm{v_t}_\infty \epsilon+\norm{v(0)-u(0)}_{L^1}+2\epsilon M \norm{u_0}_{TV}.
    \end{split}
\end{equation}
We can now bring everything together. We will only quantify the dependence on $v$ and aggregate all other constants in the constant $C>0$, which we update progressively. If we set $\eta\leftarrow \epsilon$ we find that, 
\begin{equation}
    \begin{split}
        \norm{v(T)-u(T)}_{L^1} 
        &\leq C\left(\norm{v(0)-u(0)}_{L^1} +T_1+\eta + (\alpha^2+\norm{v_t}_\infty)\ln(1/\epsilon^3)\frac{\epsilon^3}{1-\epsilon}\right)\\
        & \leq C \left(\norm{v(0)-u(0)}_{L^1} +T_1+(1+\norm{v_t}_\infty)\ln(1/\epsilon)^3\epsilon\right). 
    \end{split}
\end{equation}
Combining this with \eqref{eq:A-step2} and the fact that $T_2=0$ brings us, 
\begin{equation}
    \begin{split}
        \int_0^1 \abs{v(x,T)-u(x,T)}dx &\leq C\bigg(\int_0^1 \abs{v(x,0)-u(x,0)}dx + \max_{c\in\cC, \varphi\in\Phi_\epsilon}\cR(v,\varphi, c) \\
        &\quad + (1+\norm{v}_{C^1})\ln(1/\epsilon)^3\epsilon + \int_0^T \abs{v(1,t)-v(0,t)}dt\bigg). 
    \end{split}
\end{equation}

\end{proof}

\begin{remark}
\label{rem:tder}
In experiments (cf. Section \ref{sec:weak-pinn-training}), one can replace $\cR$ with the following alternative,
\begin{equation}\label{eq:R-equivalent}
    \Tilde{\cR}(v,\varphi, c) := \int_D\int_{[0,T]} \left(\varphi(x,t)\partial_t\abs{v(x,t)-c} - Q[v(x,t); c]\partial_x\varphi(x,t) \right)dx dt. 
\end{equation}
The only difference with $\cR$ is that in $\Tilde{\cR}$ the time derivative is with $\abs{v(x,t)-c}$ and not with the test function. 
Because of Lemma \ref{lem:partial-int-t} it holds that $\abs{\Tilde{\cR}(v,\varphi, c)-{\cR}(v,\varphi, c)} = \bigO(\epsilon)$ if $\varphi\in \Phi_\epsilon$. 
\end{remark}
\subsection{Bounds on the Generalization Gap}
\label{sec:33}
The main point of Theorem \ref{thm:stability} was to provide an upper bound on the $L^1$-error in terms of the residuals. However, in practice, one cannot evaluate the integrals in the residuals, for instance in \eqref{eq:R-def}, exactly and has to resort to quadrature. To this end, we consider the simplest case of random (Monte Carlo) quadrature and generate a set of collocation points, $\S = \{(x_i, t_i)\}_{i=1}^{M} \subset D\times [0,T]$, where all $(x_i, t_i)$ are iid drawn from the uniform distribution on $D\times [0,T]$. 
For a fixed $\theta\in\Theta$, $\varphi\in\Phi_\epsilon$, $c\in\cC$ and for this data set $\S$ we can then define the \emph{training error}, 
\begin{equation}\label{eq:def-training-error}
    \begin{split}
        \Et(\theta, \S,\varphi, c) &= -\frac{T}{M} \sum_{i=1}^{M} \left(\abs{u_\theta(x_i,t_i)-c}\partial_t\varphi(x_i,t_i) + Q[u_\theta(x_i,t_i); c]\partial_x\varphi(x_i,t_i) \right)\\
        &\qquad + \frac{T}{M}\sum_{i=1}^{M} \abs{u_\theta(x_i,0)-u(x_i,0)} +\frac{T}{M}\sum_{i=1}^{M}\abs{u_\theta(0,t_i)-u_\theta(1,t_i)}.
    \end{split}
\end{equation}

During training, one then aims to obtain neural network parameters $\theta^*_\S$, a test function $\varphi^*_\S$ and a scalar $c^*_\S$ such that
\begin{equation}
     \Et(\theta^*_\S, \S,\varphi^*_\S, c^*_\S)\approx \min_{\theta\in\Theta} \max_{\varphi\in\Phi_\epsilon}\max_{c\in\cC} \Et(\theta, \S,\varphi, c),
\end{equation}
for some $\epsilon>0$. We call the resulting neural network $u^*:=u_{\theta^*_\S}$ a weak PINN (\emph{wPINN}).

Note that the training error \eqref{eq:def-training-error} is a \emph{quadrature approximation} of the corresponding (total) residual, 
\begin{equation}\label{eq:def-gen-error}
    \begin{split}
        \Eg(\theta,\varphi, c) &= - \int_0^1\int_0^T \left(\abs{u_{\theta^*(\S)}(x,t)-c}\partial_t\varphi(x,t) + Q[u_{\theta^*(\S)}(x,t); c]\partial_x\varphi(x,t) \right)dx dt\\
        &\qquad + \int_0^1 \abs{u_\theta(x,0)-u(x,0)} dx + \int_0^T \abs{u_\theta(0,t)-u_\theta(1,t)} dt
    \end{split}
\end{equation}
The next step in the analysis of \emph{wPINNs} is to provide a bound on the so-called \emph{generalization gap} i.e., the difference between $\Eg$ and $\Et$. To this end, we have the following theorem,
\begin{theorem}\label{thm:generalization}
Let  $M, L, W\in\mathbb{N}$, $R\geq \max\{1, T, \abs{\cC}\}$ with $L\geq 2$ and $M\geq 3$. Moreover let $u_\theta:D\times [0,T]\to\mathbb{R}$, $\theta\in \Theta,$ be tanh neural networks with at most $L-1$ hidden layers, width at most $W$ and weights and biases bounded by $R$. Assume that $\Eg$ and $\Et$ are bounded by $B\geq 1$. It holds with a probability of at least $1-\delta$ that,
\begin{equation}
    \Eg(\theta^*_\S,\varphi^*_\S, c^*_\S) \leq \Et(\theta^*_\S, \S,\varphi^*_\S, c^*_\S) + \frac{3BdLW}{\sqrt{M}}\sqrt{\ln(\frac{C\ln(1/\epsilon)WRM}{\epsilon^3 \delta B})}
\end{equation}
where $C>0$ is a constant that only depends on $u$ and $f$.
\end{theorem}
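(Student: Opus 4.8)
The plan is to adapt the statistical learning theory argument of \cite{deryck2021approximation}. Set
\[
g_{\theta,\varphi,c}(x,t) := -T\big(\abs{u_\theta(x,t)-c}\,\partial_t\varphi(x,t) + Q[u_\theta(x,t);c]\,\partial_x\varphi(x,t)\big) + T\abs{u_\theta(x,0)-u(x,0)} + T\abs{u_\theta(0,t)-u_\theta(1,t)},
\]
so that $\Et(\theta,\S,\varphi,c)=\tfrac1M\sum_{i=1}^M g_{\theta,\varphi,c}(x_i,t_i)$, and $\Eg(\theta,\varphi,c)$ is, up to the normalisation constants visible in \eqref{eq:def-training-error} and \eqref{eq:def-gen-error}, the expectation of the per-sample quantity whose average is $\Et$; equivalently, one may treat the three groups of terms separately, each an empirical average of a bounded function with the correct expectation. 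Since $u_\theta$ has weights bounded by $R$ and $\abs{\partial\varphi}$ is controlled for $\varphi\in\Phi_\epsilon$, the summand $g_{\theta,\varphi,c}$ is bounded pointwise by a fixed quantity which the hypothesis of the theorem allows us to take $\lesssim B$; hence Hoeffding's inequality gives, for each \emph{fixed} triple $(\theta,\varphi,c)$ and every $\tau>0$,
\[
\Prob{\Eg(\theta,\varphi,c)-\Et(\theta,\varphi,c) > \tau}\le \exp(-c_0 M\tau^2/B^2),
\]
for an absolute constant $c_0>0$. Because the trained triple $(\theta^*_\S,\varphi^*_\S,c^*_\S)$ depends on $\S$, the point is to make this bound uniform in the parameters.

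\textbf{Covering.} The hypothesis class is parametrised by $(\theta,y,s,c)$ in $\Theta_{L,W,R}\times[0,1]\times[0,T]\times\cC$, a Euclidean box of side $\le 2R$ (using $R\ge\max\{1,T,\abs{\cC}\}$) and dimension $n=n_\theta+3$, where $n_\theta\le CLW^2$ is the number of weights and biases. I would establish a Lipschitz estimate for $(\theta,y,s,c)\mapsto g_{\theta,\varphi,c}$ in the norm $\sup_{(x,t)}\abs{\cdot}$, combining: (i) in $\theta$, the standard bound that $\theta\mapsto u_\theta$ is Lipschitz for tanh networks with weights bounded by $R$, with constant $\le C(WR)^{L}$ (which also covers the boundary term); (ii) in $c$, using $\big|\abs{u-c}-\abs{u-c'}\big|\le\abs{c-c'}$ together with the symmetry $Q[u;c]=Q[c;u]$ and Lemma~\ref{lem:Q}, giving $\abs{Q[u;c]-Q[u;c']}\le 3L_f\abs{c-c'}$; and (iii) in $(y,s)$, by differentiating the explicit formulas for $\chi_\epsilon$ and $\rho_\epsilon$ in Definition~\ref{def:varphi}. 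Step (iii) is the delicate one: $\partial_x\overline{\varphi}_\epsilon^{y,s}$, $\partial_t\overline{\varphi}_\epsilon^{y,s}$ and their derivatives in $(y,s)$ carry factors of $\beta\sim\ln(1/\epsilon)/\epsilon^3$, so the Lipschitz constant there is a fixed polynomial in $\ln(1/\epsilon)$ and $\epsilon^{-3}$. Altogether $g$ is Lipschitz in the parameters with a constant $\Lambda$ polynomial in $(WR)^L$, $L_f$, $\ln(1/\epsilon)$ and $\epsilon^{-3}$. Consequently, for any $\tau'>0$ there is a net $\cN$ with $\log\abs{\cN}\le n\log(2R\Lambda\sqrt n/\tau')$ such that each triple lies within $T\tau'$ (in $\Eg$ and in $\Et$) of some net point. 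Applying the fixed-triple estimate at each point of $\cN$, taking a union bound, and passing from an arbitrary triple to its nearest net point yields: with probability $\ge 1-\abs{\cN}\exp(-c_0 M\tau^2/B^2)$,
\[
\sup_{\theta,\varphi,c}\big(\Eg(\theta,\varphi,c)-\Et(\theta,\varphi,c)\big)\le \tau+2T\tau'.
\]

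\textbf{Calibration and main difficulty.} I would then take $\tau'\asymp\tau/T$ so the right-hand side is $\le\tfrac32\tau$, set $\delta=\abs{\cN}\exp(-c_0 M\tau^2/B^2)$ and solve for $\tau$, obtaining $\tau\asymp\tfrac{B}{\sqrt M}\sqrt{\log\abs{\cN}+\log(1/\delta)}$; then substitute $\log\abs{\cN}\le n\log(CR\Lambda\sqrt n/\tau)$, use $\sqrt n\le C\,dLW$ (absorbing $n_\theta\le CLW^2$ and the depth into the logarithm), insert $\Lambda=\mathrm{poly}((WR)^L,L_f,\ln(1/\epsilon),\epsilon^{-3})$ and the crude lower bound $\tau\gtrsim B/\sqrt M$ (so the $\tau$ inside the logarithm may be replaced harmlessly by a power of $M/B$). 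All logarithms then collapse into a single $\ln\!\big(C\ln(1/\epsilon)WRM/(\epsilon^3\delta B)\big)$ — the $M$ from $\log(1/\tau)$, the $R$ and the depth from the box and from $\Lambda$, and the factor $\epsilon^{-3}\ln(1/\epsilon)$ from step (iii) — with $C$ depending only on $u$ and $f$; tracking the numerical constants produces the stated prefactor $3BdLW/\sqrt M$. Since $(\theta^*_\S,\varphi^*_\S,c^*_\S)$ is one particular triple, the uniform bound applies to it, which is the assertion. The main obstacle is step (iii): extracting a clean, fully quantitative Lipschitz estimate for $(y,s)\mapsto\overline{\varphi}_\epsilon^{y,s}$ and its spatial and temporal derivatives out of the intricate definition \eqref{eq:varphi}, and checking that its $\epsilon$-dependence is no worse than $\epsilon^{-3}$ inside the logarithm; by contrast the $\theta$-Lipschitz bound in (i) is exponential in the depth $L$ but enters only logarithmically and hence does not affect the leading behaviour.
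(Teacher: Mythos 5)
Your proposal is correct and follows essentially the same route as the paper: the paper packages your ``Hoeffding for a fixed triple plus covering of $\Theta\times[0,1]\times[0,T]\times\cC$ and union bound'' step into Lemma \ref{lem:bound-generalization}, and otherwise computes exactly the same Lipschitz constants in $\theta$ (via $(WR)^{L-1}$), in $c$ (via Lemma \ref{lem:Q}), and in $(y,s)$ (via $\partial_t\overline{\varphi}_\epsilon,\partial_x\overline{\varphi}_\epsilon\sim\beta\overline{\varphi}_\epsilon$ and Lemma \ref{lem:Phi}, yielding $C\beta^4$), before the same calibration. Your only cosmetic worry is unnecessary: the $(y,s)$-Lipschitz constant need not be $O(\epsilon^{-3})$, since any fixed power of $\beta=\bigO(\ln(1/\epsilon)\epsilon^{-3})$ sits inside the logarithm and is absorbed into $C$ and the $3BdLW$ prefactor.
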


\begin{proof}
Note that $\Eg$ \eqref{eq:def-gen-error} is defined as the sum of the error related to the entropy residual $\cR$, the initial condition and the boundary condition. We will bound (with a high probability) each of these terms separately in terms of the corresponding term in the definition of the training error \eqref{eq:def-training-error} using Lemma \ref{lem:bound-generalization}. 

We start by calculating the Lipschitz constant of $\cR$. From the proof of Theorem \ref{thm:approx-general} and from \cite[Lemma 11]{deryck2021pinn} we find that,
\begin{equation}
\begin{split}
    \abs{\cR(u_\theta,\varphi, c)-\cR(u_\vartheta,\varphi, c)} &\leq T \abs{\varphi}_{W^{1,\infty}} (1+3L_f)\max_{x,t}\abs{(u_\theta-u_\vartheta)(x, t)}\\
    &\leq T \abs{\varphi}_{W^{1,\infty}} (1+3L_f) (d+5)(WR)^{L-1}\norm{\theta-\vartheta}_\infty.
\end{split}
\end{equation}
Following the steps of the proof of Lemma \ref{lem:Q} we also find that, 
\begin{equation}
    \abs{\cR(v,\varphi, c)-\cR(v,\varphi, b)} \leq (1+3L_f)\abs{\varphi}_{W^{1,1}(D\times [0,T])} \abs{b-c}. 
\end{equation}
Using that $\partial_t \overline{\varphi}_\epsilon = \alpha \overline{\varphi}_\epsilon +\beta \overline{\varphi}_\epsilon$ and $\partial_t \overline{\varphi}_\epsilon = \beta \overline{\varphi}_\epsilon$ we find,
\begin{equation}
    \begin{split}
        &\abs{\cR(v,\overline{\varphi}_\epsilon^{y,s}, c)-\cR(v,\overline{\varphi}_\epsilon^{z,\tau}, c)}\\
        &\leq \int_0^1\int_0^T \abs{\abs{v(x,t)-c}\partial_t(\overline{\varphi}_\epsilon^{y,s}-\overline{\varphi}_\epsilon^{z,\tau})(x,t) + Q[v(x,t); c]\partial_x(\overline{\varphi}_\epsilon^{y,s}-\overline{\varphi}_\epsilon^{z,\tau})(x,t) }dt dx\\
        &\leq C(\alpha+\beta)\int_0^1\int_0^T\abs{(\overline{\varphi}_\epsilon^{y,s}-\overline{\varphi}_\epsilon^{z,\tau})(x,t)} dt dx, 
    \end{split}
\end{equation}
where $C>0$ is a constant depending on $u$ and $f$. Combining the previous calculations with $\alpha<\beta$
and
$\abs{\varphi}_{W^{1,\infty}} =  \bigO(\beta^{3})$ (Lemma \ref{lem:Phi}) we find that, 
\begin{equation}
    \abs{\cR(v,\overline{\varphi}_\epsilon^{y,s}, c)-\cR(v,\overline{\varphi}_\epsilon^{z,\tau}, c)}\leq C\beta^4(\abs{y-z}+\abs{s-\tau}). 
\end{equation}
Putting everything together, we find that, 
\begin{equation}
\begin{split}
     \abs{\cR(u_\theta,\overline{\varphi}_\epsilon^{y,s}, c)-\cR(u_\vartheta,\overline{\varphi}_\epsilon^{z,\tau}, b)}&\leq C\beta^4(d+5)(WR)^{L-1}\norm{(\theta,y,s,c)-(\vartheta,z,\tau,b)}_\infty. 
\end{split}
\end{equation}
Similarly, we also find that
\begin{equation}
\begin{split}
    &\abs{\int_0^1 \abs{u_\theta(x,0)-u(x,0)} dx-\int_0^1 \abs{u_\vartheta(x,0)-u(x,0)} dx}  \leq \max_{x}\abs{(u_\theta-u_\vartheta)(x, 0)}\leq (d+5)(WR)^{L-1}\norm{\theta-\vartheta}_\infty
\end{split}
\end{equation}
and also,
\begin{equation}
    \abs{\int_0^T \abs{u_\theta(0,t)-u_\theta(1,t)} dt-\int_0^T \abs{u_\vartheta(0,t)-u_\vartheta(1,t)} dt} \leq T(d+5)(WR)^{L-1}\norm{\theta-\vartheta}_\infty. 
\end{equation}
In total, we conclude that
\begin{equation}
\begin{split}
    \abs{\Eg(\theta,y,s,c)-\Eg(\vartheta,z,\tau,b)}  &\leq C\beta^4(d+5)(WR)^{L-1}\norm{(\theta,y,s,c)-(\vartheta,z,\tau,b)}_\infty  \\ &=: \mathfrak{L}\norm{(\theta,y,s,c)-(\vartheta,z,\tau,b)}_\infty. 
\end{split}
\end{equation}
The same inequality holds for the training error \eqref{eq:def-training-error}. 

Next, one can calculate that every $u_\theta$ has at most $(d+1+(L-2)W+1)W$ weights and $(L-1)W+1$ biases. Together with $y,s,b$ we find that there are at most $k\leftarrow 4(d+1)LW^2\leq 9dLW^2$ parameters. We can now obtain the inequality from the statement by using Lemma \ref{lem:bound-generalization} with $\Theta\leftarrow\Theta\times [0,1]\times[0,T]\times\cC$, the calculated values of $\mathfrak{L}, k$ and $a\leftarrow R$ and the estimate,
\begin{equation}
     \sqrt{\frac{B^2k}{M}\ln(\frac{a\mathfrak{L}\sqrt{M}}{\sqrt[k]{\delta} B})} \leq \frac{3BdLW}{\sqrt{M}}\sqrt{\ln(\frac{C\ln(1/\epsilon)WRM}{\epsilon^3 \delta B})}, 
\end{equation}
where we used that $\beta = \bigO(\ln(1/\epsilon)\epsilon^{-3})$. Finally, our chosen value of $k$ leads to the implication that $M\geq 3 \implies M\geq e^{16/k}$, as is required by Lemma \ref{lem:bound-generalization}.  
\end{proof}

Using Theorem \ref{thm:stability} and the above bound on the generalization gap, one can prove the following rigorous upper bound on the total $L^1$-error of the weak PINN, which we denote as,
\begin{equation}\label{eq:def-total-error}
\begin{split}
        \cE^T(\theta) &= \int_D \abs{u_\theta(x,T)-u(x,T)} dx. 
    \end{split}
\end{equation}

\begin{corollary}\label{cor:generalization}
Assume the setting of Theorem \ref{thm:generalization}. It holds with a probability of at least $1-\delta$ that
\begin{equation}
\label{eq:erest}
\begin{split}
     \cE^T(\theta^*_\S) &\leq C \Bigg[\underbrace{\Et(\theta^*_\S, \S,\varphi^*_\S, c^*_\S), c^*_\S)}_{\text{training error}}+ \underbrace{\frac{3BdLW}{\sqrt{M}}\sqrt{\ln(\frac{C\ln(1/\epsilon)WRM}{\epsilon^3 \delta B})}}_{\text{generalization gap}} 
     + \underbrace{(1+\norm{u^*}_{C^1})\ln(1/\epsilon)^3\epsilon}_{\text{limited test space } \Phi_\epsilon \subsetneq H^1} 
     \Bigg]. 
\end{split}
\end{equation}
\end{corollary}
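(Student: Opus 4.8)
The plan is to combine the two previously established estimates, namely the stability bound of Theorem \ref{thm:stability} and the generalization-gap bound of Theorem \ref{thm:generalization}, by a simple chain of inequalities. First, I would apply Theorem \ref{thm:stability} with $v \leftarrow u^* = u_{\theta^*_\S}$ and with the set of test functions $\Phi_\epsilon$ from Definition \ref{def:varphi}. This gives
\begin{equation*}
    \cE^T(\theta^*_\S) = \int_0^1 \abs{u^*(x,T)-u(x,T)}\,dx \leq C\bigg(\int_0^1 \abs{u^*(x,0)-u(x,0)}\,dx + \max_{c\in\cC,\varphi\in\Phi_\epsilon}\cR(u^*,\varphi,c) + (1+\norm{u^*}_{C^1})\ln(1/\epsilon)^3\epsilon + \int_0^T \abs{u^*(1,t)-u^*(0,t)}\,dt\bigg).
\end{equation*}
The first, second and fourth terms on the right-hand side are precisely the three summands that constitute the generalization error $\Eg(\theta^*_\S,\varphi,c)$ in \eqref{eq:def-gen-error} (after recognizing that $\max_{c,\varphi}\cR(u^*,\varphi,c) + \int_0^1\abs{u^*(x,0)-u(x,0)}dx + \int_0^T\abs{u^*(0,t)-u^*(1,t)}dt$ is exactly $\max_{c,\varphi}\Eg(\theta^*_\S,\varphi,c)$, since the initial and boundary terms do not depend on $\varphi$ or $c$). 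Hence the bracketed quantity is bounded by $\max_{c\in\cC,\varphi\in\Phi_\epsilon}\Eg(\theta^*_\S,\varphi,c) + (1+\norm{u^*}_{C^1})\ln(1/\epsilon)^3\epsilon$, up to the constant $C$.

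Next, I would invoke Theorem \ref{thm:generalization}: with probability at least $1-\delta$,
\begin{equation*}
    \Eg(\theta^*_\S,\varphi^*_\S,c^*_\S) \leq \Et(\theta^*_\S,\S,\varphi^*_\S,c^*_\S) + \frac{3BdLW}{\sqrt{M}}\sqrt{\ln\!\Big(\tfrac{C\ln(1/\epsilon)WRM}{\epsilon^3\delta B}\Big)}.
\end{equation*}
The remaining subtlety is that Theorem \ref{thm:stability} requires $\max_{c,\varphi}\Eg(\theta^*_\S,\varphi,c)$, i.e. the supremum over the whole test set, whereas Theorem \ref{thm:generalization} is phrased at the trained maximizers $\varphi^*_\S, c^*_\S$. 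I would resolve this by noting that the generalization bound of Lemma \ref{lem:bound-generalization}, on which Theorem \ref{thm:generalization} rests, is a uniform-over-$\Theta\times[0,1]\times[0,T]\times\cC$ bound (this is exactly why the covering-number/Lipschitz argument is run), so in fact $\max_{c,\varphi}\Eg(\theta^*_\S,\varphi,c) \leq \max_{c,\varphi}\Et(\theta^*_\S,\S,\varphi,c) + (\text{gen. gap})$ holds with probability $1-\delta$; and since $\theta^*_\S,\varphi^*_\S,c^*_\S$ are (approximate) maximizers of $\Et$ over $\varphi,c$, the right-hand maximum equals $\Et(\theta^*_\S,\S,\varphi^*_\S,c^*_\S)$ up to the optimization tolerance. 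Substituting this into the stability estimate and absorbing all remaining absolute constants into $C$ yields \eqref{eq:erest}.

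The main obstacle I anticipate is this bookkeeping of where the maximum over $(\varphi,c)$ is taken — making sure that the "generalization gap" term genuinely controls $\sup_{\varphi\in\Phi_\epsilon, c\in\cC}\Eg - \sup_{\varphi\in\Phi_\epsilon,c\in\cC}\Et$ uniformly, rather than just the difference at a single pair, and that the trained triple $(\theta^*_\S,\varphi^*_\S,c^*_\S)$ attains (up to a negligible error) the inner maximum of $\Et$. Everything else is a direct concatenation: Theorem \ref{thm:stability} converts the $L^1$-error at time $T$ into residual plus initial/boundary terms plus the $\ln(1/\epsilon)^3\epsilon$ correction, Theorem \ref{thm:generalization} converts the continuous residual into its Monte-Carlo training counterpart plus the explicit $M^{-1/2}$ statistical term, and the constant $C$ is updated to swallow $T$, the Lipschitz constants, and the factors introduced along the way. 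I would also remark, as a caveat already implicit in the statement, that the bound is only useful when $\norm{u^*}_{C^1}$ stays controlled, which is a property of the trained network rather than something proved here.
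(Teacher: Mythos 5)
Your proposal is correct and follows essentially the same route as the paper: apply Theorem \ref{thm:stability} with $v\leftarrow u^*$, identify the resulting residual, initial and boundary terms with the generalization error \eqref{eq:def-gen-error}, and then invoke Theorem \ref{thm:generalization} to replace it by the training error plus the explicit gap. The only difference is the bookkeeping of the maximum over $(\varphi,c)$: the paper adds and subtracts $\cR(u^*,\varphi^*_\S,c^*_\S)$ and asserts $\max_{c\in\cC,\varphi\in\Phi_\epsilon}\cR(u^*,\varphi,c)-\cR(u^*,\varphi^*_\S,c^*_\S)\leq 0$, implicitly treating $(\varphi^*_\S,c^*_\S)$ as the maximizer, whereas you justify the same point via the uniform (covering-based) nature of Lemma \ref{lem:bound-generalization} combined with $(\varphi^*_\S,c^*_\S)$ maximizing $\Et$ --- an equally valid, if anything slightly more careful, rendering of the paper's step.
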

\begin{proof}
Using Theorem \ref{thm:stability} we find that,
\begin{equation}
\begin{split}
    \cE^T(\theta^*_\S) &\leq 
    C\bigg[\norm{u^*(\cdot,0)-u(\cdot,0)}_{L^1} + \max_{c\in\cC, \varphi\in\Phi_\epsilon}\cR(u^*,\varphi, c) \\
        &\qquad + (1+\norm{u^*}_{C^1})\ln(1/\epsilon)^3\epsilon + \norm{u^*(1,\cdot)-u(0,\cdot)}_{L^1}\bigg] \\
&\leq 
    C\bigg[\cR(u^*,\varphi^*_\S, c^*_\S)+\norm{u^*(\cdot,0)-u(\cdot,0)}_{L^1} +(1+\norm{u^*}_{C^1})\ln(1/\epsilon)^3\epsilon \\
        &\qquad + \left(\max_{c\in\cC, \varphi\in\Phi_\epsilon}\cR(u^*,\varphi, c) -\cR(u^*,\varphi^*_\S, c^*_\S)\right) + \norm{u^*(1,\cdot)-u(0,\cdot)}_{L^1}\bigg]\\
        &=  C\bigg[ \Eg(\theta^*_\S,\varphi^*_\S, c^*_\S) + (1+\norm{u^*}_{C^1})\ln(1/\epsilon)^3\epsilon +\underbrace{\max_{c\in\cC, \varphi\in\Phi_\epsilon}\cR(u^*,\varphi, c) -\cR(u^*,\varphi^*_\S, c^*_\S)}_{\leq 0} \bigg]
\end{split}
\end{equation}
Combining the previous inequality with Theorem \ref{thm:generalization} we then find the statement of the corollary. 
\end{proof}
Thus, the estimate \eqref{eq:erest} provides a rigorous bound on the total error of a \emph{wPINN} in approximating the entropy solution of the scalar conservation law \eqref{eq:scl}, in terms of the training error, size of the underlying neural networks and the number of collocation points. In particular, this error can be made as small as desired by choosing sufficient number of collocation points and networks of suitable size. 

\begin{remark}
Instead of using random training points, one can choose a training set based on low-discrepancy sequences. This will improve the convergence rate in terms of $M$ to $\bigO(V_{HK}\ln(M)^d M^{-1})$ by using arguments from \cite{mishra2021enhancing}, where $V_{HK}$ is the Hardy-Krause variation of the integrand in the definition of $\Eg(\theta,c)$. However, if one also considers the $\epsilon$-dependence of $V_{HK}$ through $\overline{\varphi}_{\epsilon}$ then it is better to use random points as the bound in Corollary \ref{cor:generalization} only depends sublogarithmically on $\epsilon^{-1}$ whereas upper bounds for $V_{HK}$ depend polynomially on $\epsilon^{-1}$. 
\end{remark}

\begin{remark}\label{rmk:lnorm}
Throughout the entire section, we have focused on calculating the $L^1$-error of the weak PINN, as the $L^1$-norm is a natural choice for scalar conservation laws. In practice, however, we will see that it is easier to train the weak PINN using an $L^2$-based loss function. The developed theory can be easily extended to $L^p$-based loss functions for $p>1$. First, one can use Hölder's inequality to prove an $L^2$-based upper bound for the RHS of the stability result \eqref{eq:stability} of Theorem \ref{thm:stability}. Second, one needs to make an adaptation to Theorem \ref{thm:generalization}. This will lead to a theoretical deterioration of the generalization gap from $\bigO\left(M^{-1/2}\sqrt{\ln(M)}\right)$ to $\bigO\left(M^{-1/4}\sqrt{\ln(M)}\right)$, as is common for these type of estimates. 
\end{remark}
\section{Training and Implementation of \emph{wPINNs}}
\label{sec:weak-pinn-training}
In this section, we describe how \emph{wPINNs}, designed in Section \ref{sec:24}, are implemented and trained in practice. To this end, we need the following elements,
\subsection{Set of Collocation Points.}
The set of collocation points $\S$, also called as \emph{training set} in the literature on PINNs, was already introduced in Section \ref{sec:33}. 
We will divide $\S \subset D \times  [0,T]$, into the following three parts,
\begin{itemize}
    \item Interior collocation points $\S_{int}=\{y_m\}$ for $1 \leq m \leq M_{int}$, with each $y_m = (x,t)_m \in D\times [0, T]$.  
    \item Spatial boundary collocation points $\S_{sb} = \{z_m\}$ for $1 \leq m \leq M_{sb}$ with each $z_m= (x,t)_m$ and $z_m \in \partial D \times [0,T]$. 
    \item Temporal boundary collocation points $\S_{tb} = \{x_m\}$, with $1 \leq m \leq M_{tb}$ and $x_m \in D$.
\end{itemize}
The full set of collocation points is $\S = \S_{int} \cup \S_{sb} \cup \S_{tb}$ and $M = M_{int} + M_{sb} + M_{tb}$.
\subsection{Residuals}
We recapitulate the definitions of different residuals, already introduced in \eqref{eq:minimax} and further refined in Theorem \ref{thm:stability} (see also Remark \ref{rem:tder}) below,
\begin{itemize}
    \item \textit{Interior residual} given by,
    \begin{equation}
        r_{int}[u_\theta, \varphi](x,t, c):= \varphi(x,t)\partial_t\abs{u_\theta(x,t)-c} - Q[u_\theta(x,t); c]\partial_x\varphi(x,t) .
    \end{equation}
Observe that the residual depends on the test function $\varphi(x,t)$. We restrict the choice of the test function to the parametrized family of functions $\varphi_\eta(x,t)$ defined as $\varphi_\eta(x,t) = \omega(x,t) \xi_\eta(x,t)$.  Here,  $\omega : D \mapsto \R$ is a \textit{cutoff} function satisfying the following properties:
\begin{enumerate}
    \item $\omega(x)=1,\quad x\in D_\varepsilon$,
    \item $\omega(x)=0,\quad x\in \partial D$,
\end{enumerate}
\begin{equation}
    D_\epsilon = \{x \in D: dist(x,\partial D)< \epsilon\},
\end{equation}
and $\xi_\eta(x,t)$ a neural network with trainable parameters $\eta$. 
The choice of the cutoff function guarantees that the test function has compact support.
Then, the interior residual becomes,
    \begin{equation}
         r_{int}[u_\theta, \varphi_\eta]:= \varphi_\eta(x,t)\partial_t\abs{u_\theta(x,t)-c} - Q[u_\theta(x,t); c]\partial_x\varphi_\eta(x,t).
    \end{equation}
\item \textit{Spatial boundary residual} given by,
\begin{equation}
    \label{eq:hres2}
    r_{sb}[u_\theta](x,t):= u_{\theta}(x,t) - g(x,t).\quad \forall x \in \partial D, ~ t \in (0,T],
\end{equation}
Although the estimates above were derived assuming periodic boundary conditions, the numerical experiments are carried out with Dirichlet boundary conditions corresponding to the boundary trace of exact solutions i.e., with $g(x,t)=u|_{\partial D \times (0,T)}$.
\item \textit{Temporal boundary residual} given by,
\begin{equation}
    \label{eq:hres3}
    r_{tb}[u_\theta](x):= u_{\theta}(x,0) - {u}(x, 0), \quad \forall x \in D.
\end{equation}
\end{itemize}
\subsection{Loss function}
Given the definitions above, we consider the following loss function,
\begin{equation}
    \label{eq:hlf}
    J(\theta, \eta) = J_{pde}(\theta, \eta,c) +  \lambda J_{u}(\theta)
\end{equation}
with 
\begin{equation}
   J_{pde}(\theta, \eta, c) = \frac{\left(ReLU\Big(\sum_{m=1}^{M_{int}} r_{int}[u_\theta, \varphi_\eta](y_m, c)\Big)\right)^2}{\sum_{m=1}^{M_{int}} \partial_x \varphi_\eta(y_m)^2}, \quad J_u(\theta) = \sum\limits_{m=1}^{M_{tb}}  r_{tb}[u_\theta](x_m)^2 + \sum\limits_{m=1}^{M_{sb}}  r_{sb}[u_\theta](z_m)^2.
\end{equation}
Here, $\lambda$ is a hyperparameter for balancing the role of PDE and data residuals, and the denominator of $J_{pde}$ a Monte Carlo approximation of the $H^1$-seminorm of $\varphi_\eta(x,t)$. The following remarks about the specific form of the loss function \eqref{eq:hlf} are in order, 
\begin{remark}
The terms appearing in the loss function terms are Monte Carlo approximations of the integrals in the error estimate, cf. Theorem \ref{thm:stability}, where the $L^1$ norm has been replaced by the $L^2$ (Remark \ref{rmk:lnorm}). This was done to facilitate optimization of the resulting min-max problem. 
\end{remark}
\begin{remark}
We observe that additional terms, i.e., use of the ReLU function and test function $H^1$-seminorm, have been introduced in the definition of the loss function \eqref{eq:hlf}, when compared to the terms in the error estimate in Theorem \ref{thm:stability}. These are introduced to facilitate training and the estimates can be readily extended to incorporate the contributions of these terms.  
\end{remark}
\begin{remark}
In practice, the maximization problem with respect to the scalar $c$ is solved by computing $J_{max,C}(\theta, \eta) = \max_{c_i\in C}J_{max}(\theta, \eta, c_i)$, for a discrete set of values $C= \{ c_i \}_{i=1}^M, c_i\in [c_{min}, c_{max}]$, whereas the optimization problems with respect to the neural network parameters $\theta$ and $\eta$ is approximated with gradient descent and ascent, respectively.
\end{remark}
\subsection{Weak PINNs Algorithm}
Given the above description of its constituents, we summarize the \emph{wPINNs} algorithm in Algorithm \ref{alg:wpinn}. \\\\
\begin{algorithm}[H]
\SetAlgoLined
\KwResult{$\theta^\ast_\S, \eta^\ast_\S, c_\S^\ast$}
 Initialize the networks $u_\theta, \varphi_\eta:D \times [0, T]  \mapsto \R $ and $C$\;
 \For{$e=1,...,N$}{
 \For{$k=1,...,N_{max}$}{
 Compute $J_{max,C}(\theta, \eta) = \max_{c_i\in C}J_{max}(\theta, \eta, c_i)$\;
 Update $\eta  \leftarrow \eta + \tau_\eta\nabla J_{max,C}(\theta, \eta)$\;}
 \For{$k=1,...,N_{min}$}{
 Compute $J_{max,C}(\theta, \eta) = \max_{c_i\in C}J_{max}(\theta, \eta, c_i)$\;
 Update $\theta  \leftarrow \theta - \tau_\theta\nabla (\lambda J_{max,C} + J_u)(\theta, \eta)$\;}
 }
 \label{alg:wpinn}
 \caption{Training of \emph{wPINNs}}
\end{algorithm}
\subsection{Implementation of \emph{wPINNs}}
The implementation of the \emph{wPINNs} algorithm is carried out with a collection of Python scripts, realized with the support of PyTorch \url{https://pytorch.org/}. The scripts can be downloaded from \url{https://github.com/mroberto166/wpinns}. Below, we describe key implementation details.
\subsubsection{Ensemble Training}\label{sec:ensemble_train}
 \emph{wPINNs} include several hyperparameters, including number of hidden layers and neurons of the networks, $L_\theta, L_\eta$, $l_\theta$, $l_\eta$, the number of iterations $K_{max}$, $K_{min}$, number of epochs $e$, residual parameter $\lambda$, etc. A user is always confronted with the question of which parameter to choose. It is standard practice in machine learning to perform a systematic hyperparameter search. To this end, we follow the \emph{ensemble training} procedure of \cite{LMR1}: for each configuration of the model hyperparameters we \emph{retrain} the \emph{wPINN} $n_\theta$ times, each with different initialisation of the networks hyperparameters, and select the hyperparameter configuration that minimises the average value over the retrainings of the training error \ref{eq:def-training-error}, computed in the $L^2$ norm:
 \begin{equation}\label{eq:def-selection_criterion}
     \begin{split}
        \Et(\theta^\ast_\S, \eta^\ast_\S, c^\ast_\S) &=  \sum_{m=1}^{M_{int}} \left(\varphi^\ast(y_m)\partial_t\abs{u^\ast(y_m)-c^\ast_\S} - Q[u_\theta(y_m); c^\ast_\S]\partial_x\varphi^\ast(y_m) \right)^2\\
        &\qquad + \sum_{m=1}^{M_{sb}} \abs{u_\theta(x_m,0)-u(x_m,0)}^2 +\sum_{m=1}^{M_{tb}}\abs{u^\ast(z_m)-u(z_m)}^2.
    \end{split}
 \end{equation}
 \subsubsection{Random Reinitialization of the Test function Parameters}
(Approximate) solutions of min-max problems are significantly harder to reach, when compared to standard minimization (or maximization) problems, as they correspond to saddle points of the underlying loss function. One essential ingredient for improving the numerical stability of the algorithm is the random reinitialization of the trainable parameters $\eta$, corresponding to the test function neural network in \eqref{eq:hlf}. This can be performed with frequency $r_f$. This \textit{reset frequency} can be suitably chosen as any other model hyperparameters through ensemble training. On account of this random reinitialization of the test function parameters $\eta$, the algorithm \ref{alg:wpinn} can be readily modified to yield algorithm \ref{alg:wpinn_reset}, that is used in practice.
\subsubsection{Averages of retrainings} \label{sec:retraining_average}
The final \emph{wPINN} approximation to the solution of the scalar conservation law \eqref{eq:scl} at any given input $(x,t)$, denoted as $u_{av}(x,t)$, is defined as the average over retrainings,
\begin{equation}
    u_{av}(x,t) = \frac{1}{n_\theta}\sum_i^{n_\theta}u_i^\ast(x,t),
\end{equation}
where $u_i^\ast(x,t)$ denotes the predictions of the underlying \emph{wPINN} at $(x,t)$, trained via algorithm \ref{alg:wpinn_reset}, with initial parameters $\theta_i, \eta_i$. This averaging is performed to yield more robust predictions as well as to provide an estimate of the underlying uncertainty in predictions, due to the random initializations of the neural network parameters during training. 
\vspace{0.5cm}

\begin{algorithm}[H]
\SetAlgoLined
\KwResult{$\theta^\ast_\S, \eta^\ast_\S, c_\S^\ast$}
 Initialize the networks $u_\theta, \varphi_\eta:D \times [0, T] \mapsto \R $ and $C$\;
 \For{$e=1,...,N$}{
 \If{$e~\% ~(r_fN) = 0$}{Randomly initialize $\eta$\;}  
 \For{$k=1,...,N_{max}$}{
 Compute $J_{max,C}(\theta, \eta) = \max_{c_i\in C}J_{max}(\theta, \eta, c_i)$\;
 Update $\eta  \leftarrow \eta + \tau_\eta\nabla J_{max,C}(\theta, \eta)$\;}
 \For{$k=1,...,N_{min}$}{
 Compute $J_{max,C}(\theta, \eta) = \max_{c_i\in C}J_{max}(\theta, \eta, c_i)$\;
 Update $\theta  \leftarrow \theta - \tau_\theta\nabla (\lambda J_{max,C} + J_u)(\theta, \eta)$\;}
 }
 \label{alg:wpinn_reset}
 \caption{\textit{Weak PINN} training with random reset of the test function parameters}
\end{algorithm}
\section{Numerical Results} 
\label{sec:numexp}
In this section, we present numerical experiments to illustrate the performance of \emph{wPINNs}. To this end, we consider the scalar conservation law \eqref{eq:scl} in the domain $D = [-1,1]$, with the flux function $f(u) = \frac{1}{2}u^2$. Note that this amounts to considering the well-known inviscid Burgers' equation. We evaluate the performance of \emph{wPINNs}, implemented through algorithm \ref{alg:wpinn_reset}, by computing the (relative) total error at a final time $T$,
\begin{equation}
\cE^T_r(\theta^\ast_\S) = \frac{\int_D \abs{u^\ast(x,T)-u(x,T)} dx}{\int_D \abs{u(x,T)} dx},      
\end{equation}
where $u^{\ast}$ is the prediction of the \emph{wPINN} algorithm \ref{alg:wpinn_reset}. We also compute the space-time relative error, 
\begin{equation}
\cE_{r}(\theta^\ast_\S) = \frac{\int_{D\times [0,T]} \abs{u^\ast(x,t)-u(x,t)} dxdt}{\int_{D\times [0,T]} \abs{u(x,t)} dx dt},
\end{equation}
to assess the performance of \emph{wPINNs} over the entire evolution of the entropy solution. We remark that the integrals in the above error expressions can be readily approximated with Monte Carlo quadratures. We consider the following numerical experiments, 
\subsection{Standing and Moving Shock}
As a first numerical example, we consider the Burgers' equation in $[-1,1]\times [0, 0.5]$  with initial conditions:
\begin{equation}
    u_0(x) = \begin{cases} 
      1 & x\leq 0 \\
      -1 & x> 0 
   \end{cases}, \quad u_0(x) = \begin{cases} 
      1 & x\leq 0 \\
      0 & x> 0 
   \end{cases}
\end{equation}
which result into a standing shock located at $x=0$ and a shock moving with speed $0.5$, respectively:
\begin{equation}\label{eq:ex_st_mov}
    u(x, t) = \begin{cases} 
      1 & x\leq 0 \\
      -1 & x> 0 
   \end{cases}, 
   \quad 
   u(x, t) = \begin{cases} 
      1 & x\leq \frac{t}{2} \\
      0 & x> \frac{t}{2}
   \end{cases}
\end{equation}
We perform an ensemble training, as outlined in the previous section, to find the best set of hyperparameters among those mentioned in Tables \ref{tab:moving_hyp} and \ref{tab:sine_hyp}. On the other hand, we fix $l_\theta=20$, $l_\eta=10$, $N_{min}=1$, $\lambda=10$ $e=5000$, $\tau_\theta = 0.01$ and $\tau_\eta = 0.015$, $n_\theta=10$ and the sin activation function as the activation function $\sigma_\theta$ for the neural network approximating the solution of \eqref{eq:scl}. 
\begin{table}[htbp] 
    \centering
    \renewcommand{\arraystretch}{1.1} 
    
    \footnotesize{
        \begin{tabular}{ c c c c c  c c    } 
            \toprule
              &  \bfseries $L_{\theta}$ & \bfseries $L_{\eta}$ &  \bfseries $\sigma_{\eta}$ &\bfseries $N_{max}$  & \bfseries $r_f$   \\ 
            \midrule
            \midrule
           & 4,6 & 2,4  & $sin$,$tanh$  &6, 8 &0.001, 0.005, 0.025, 0.05\\ 
            \bottomrule
        \end{tabular}
    \caption{Hyperparameter configurations and number of retrainings employed in the ensemble training of \emph{wPINN} for moving shock.}
        \label{tab:moving_hyp}
    }
\end{table}

\begin{table}[htbp] 
    \centering
    \renewcommand{\arraystretch}{1.1} 
    
    \footnotesize{
        \begin{tabular}{ c c c c c c c  c  } 
            \toprule
              &  \bfseries $L_{\theta}$ & \bfseries $L_{\eta}$ & \bfseries $\sigma_{\eta}$ &\bfseries $N_{max}$  & \bfseries $r_f$   \\ 
            \midrule
            \midrule
           & 4,6 & 2,4   & $sin$,$tanh$  &6, 8 &0.025, 0.05, 0.25\\ 
            \bottomrule
        \end{tabular}
    \caption{Hyperparameter configurations and number of retrainings employed in the ensemble training of \emph{wPINN} for standing shock, rarefaction wave and initial sine wave.}
        \label{tab:sine_hyp}
    }
\end{table}

With this setting, the \emph{wPINNs} algorithm \ref{alg:wpinn_reset} is run and its average prediction (as described in Section \ref{sec:retraining_average}) is plotted in Figures \ref{fig:standing} and \ref{fig:moving}, respectively, where we also compare the predictions with the exact solutions \eqref{eq:ex_st_mov} at different times. From these figures, we observe that the \emph{wPINNs} average prediction accurately approximates both the standing as well as the moving shock. There is some variance in the predictions of multiple retrainings. This is completely expected as a highly non-convex min-max optimization problem is being approximated and it is possible to be trapped at local saddle points. Nevertheless, the quantitative predictions of the relative total errors $\cE_r(\theta^\ast_\S)$ and $\cE^T_{r}(\theta^\ast_\S)$, presented in Table \ref{tab:res}, are very accurate, with even errors for the whole time-history of evolution, being below $2\%$. Finally, in Figure  \ref{fig:burg_best_overall}, we also plot the \emph{wPINN} prediction that corresponds to the hyperparameter configuration that leads to smallest overall error among all tested hyperparameter configurations.  We term it as the \emph{best hyperparameter configuration}. This \emph{best} prediction is extremely accurate, with the largest error below $0.2\%$. However, in practice, one does not have access to exact (or reference) solutions and needs to choose hyperparameter configurations that correspond to the smallest values of the loss function. 
\begin{figure}
    \begin{subfigure}{.48\textwidth}
        \centering
        \includegraphics[width=1\linewidth]{{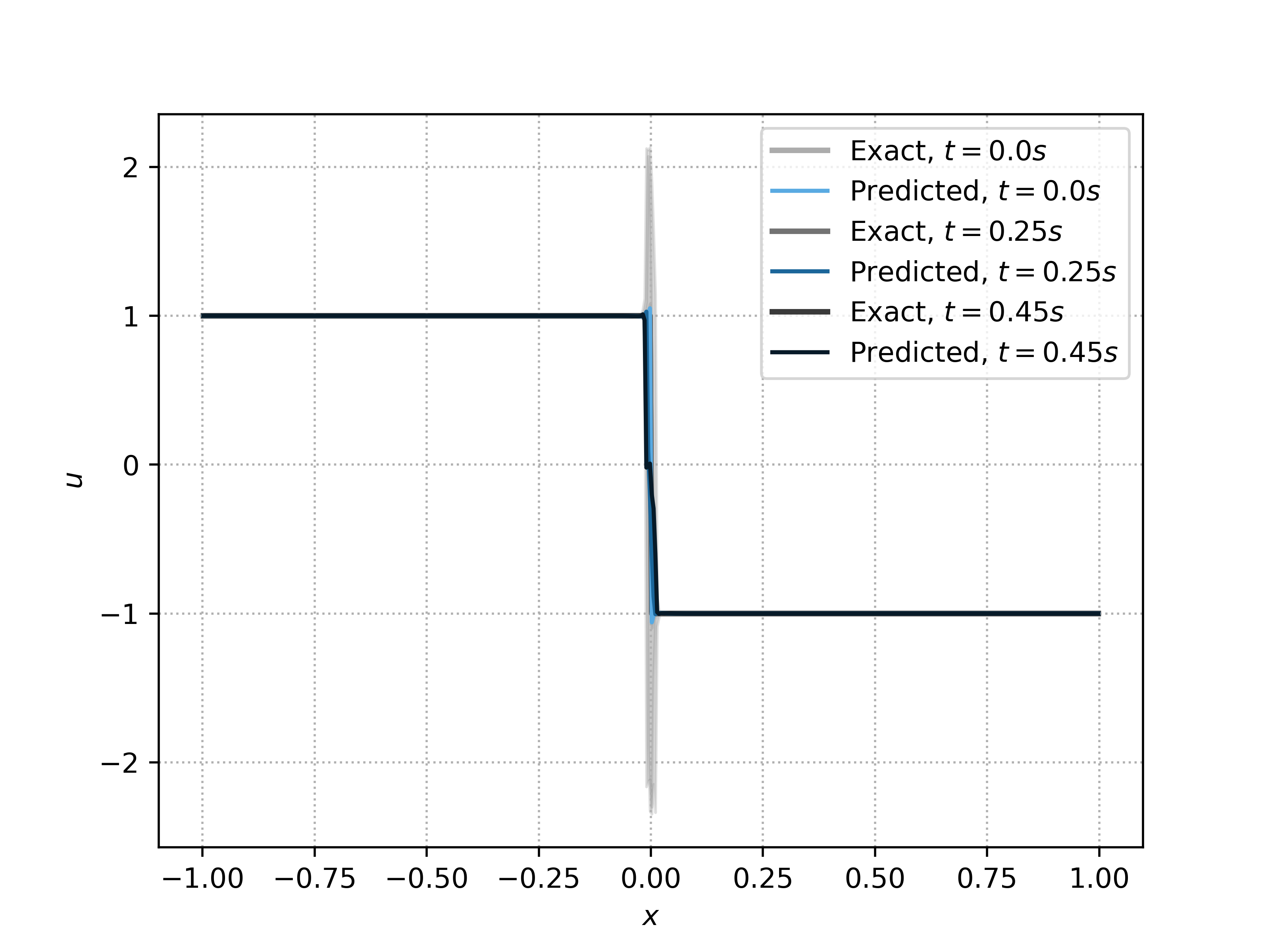}}
        \caption{Standing Shock Solution , $\cE^T_r(\theta^\ast_\S) =0.01$}
        \label{fig:standing}
    \end{subfigure}
    \begin{subfigure}{.48\textwidth}
        \centering
        \includegraphics[width=1\linewidth]{{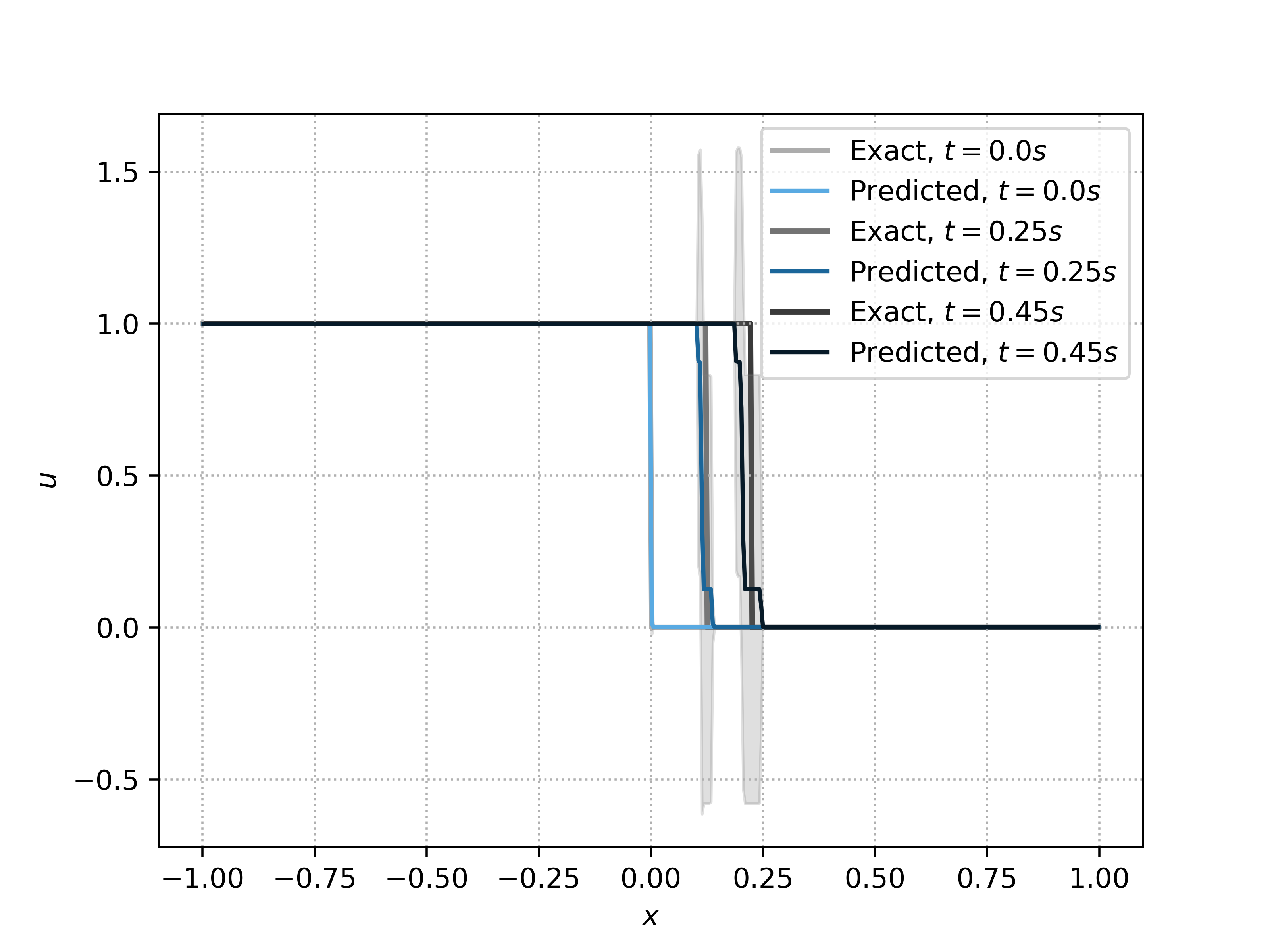}}
        \caption{Moving Shock Solution, $\cE^T_r(\theta^\ast_\S) =0.019$}
        \label{fig:moving}
    \end{subfigure}
    \begin{subfigure}{.48\textwidth}
        \centering
        \includegraphics[width=1\linewidth]{{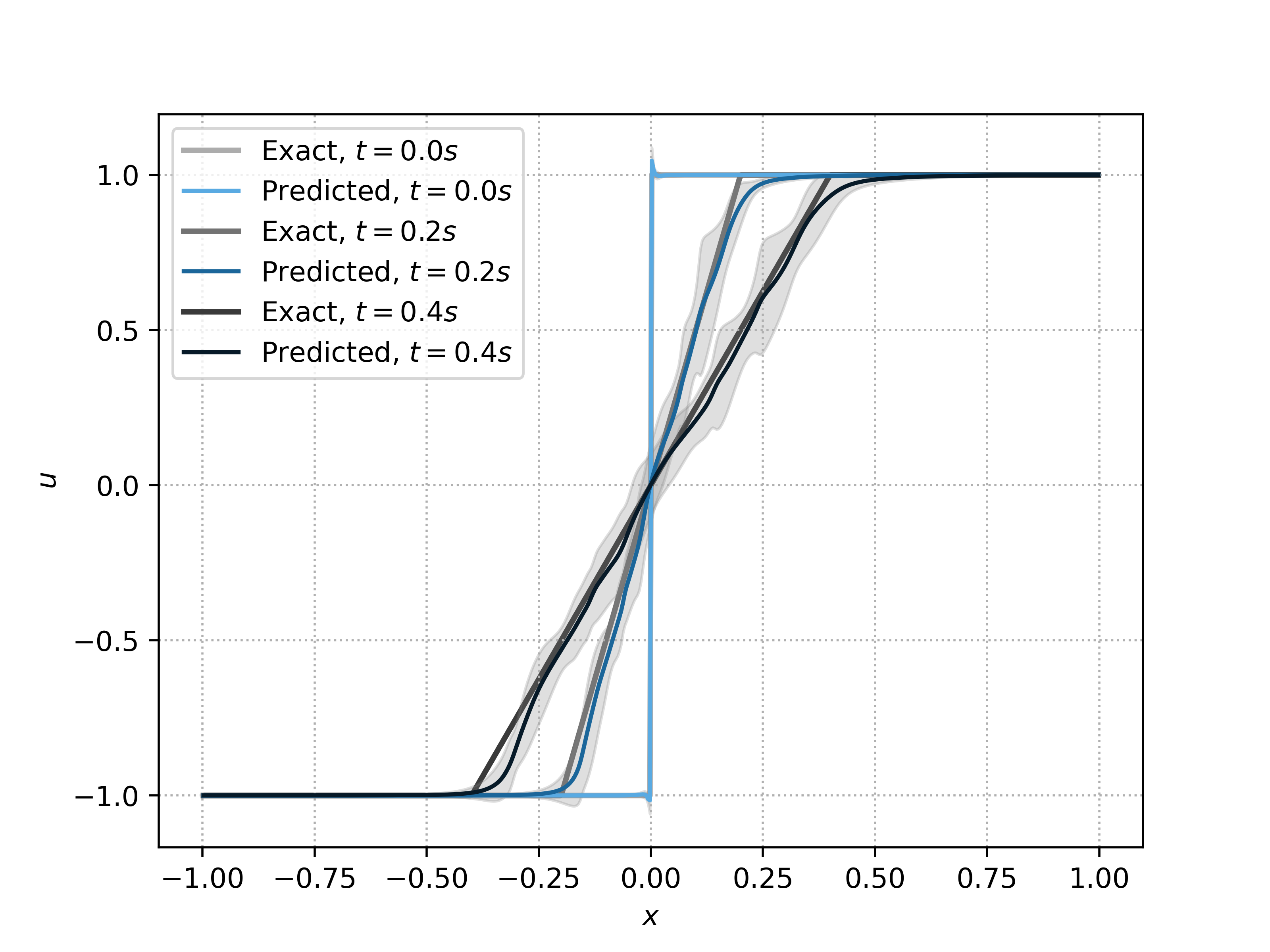}}
        \caption{Rarefaction Wave, $\cE^T_r(\theta^\ast_\S) =0.022$}
        \label{fig:rarefaction}
    \end{subfigure}
    \begin{subfigure}{.48\textwidth}
        \centering
        \includegraphics[width=1\linewidth]{{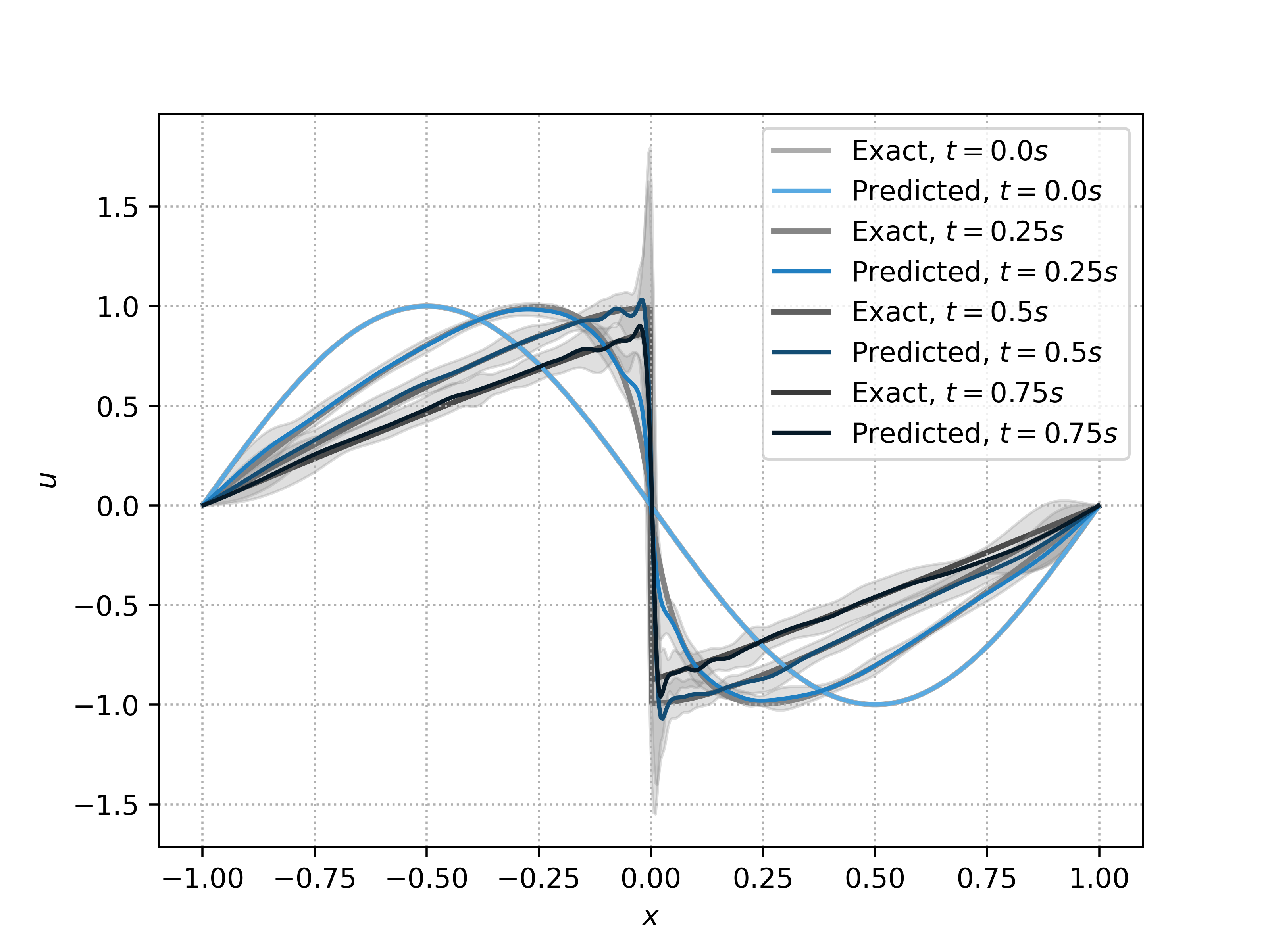}}
        \caption{Initial Sine Wave, $\cE^T_r(\theta^\ast_\S) =0.057$}
        \label{fig:sine}
    \end{subfigure}
    
    \caption{Exact solutions and average predictions obtained with \emph{wPINN} for the Burgers' equation.  Retraining average and standard deviation are plotted.}
\label{fig:burg}
\end{figure}

\subsection{Rarefaction Wave}
We further test the performance of \emph{wPINNs} by considering the Burgers' equation with initial data,
\begin{equation}
    u_0(x) = \begin{cases} 
      -1 & x\leq 0 \\
      1 & x> 0 
   \end{cases}
\end{equation}
The exact solution, given by,
\begin{equation}
   u(x, t) = \begin{cases} 
      -1 & x\leq -t \\
      \frac{x}{t} & -t < x\leq t \\
      1 & x> t
   \end{cases}
\end{equation}
corresponds to a \emph{rarefaction wave}. 
\begin{table}[htbp] 
    \centering
    \renewcommand{\arraystretch}{1.1} 
    \footnotesize{
        \begin{tabular}{ c c c c c c c  } 
            \toprule
            \bfseries   &\bfseries $M_{int}$   & \bfseries  $M_{tb}$   &\bfseries $M_{sb}$    & \bfseries $\cE_{r}$  & \bfseries $\cE^T_r$   \\ 
            \midrule
            \midrule
              \bfseries Standing Shock    & 16384   & 4096 & 4096 &0.005  &0.01  \\
            \midrule 
             \bfseries  Moving Shock   & 16384  & 4096 & 4096 &0.011 &0.019  \\
                \midrule 
              \bfseries Rarefaction Wave  & 16384  & 4096& 4096&0.013 &0.022 \\
              \midrule 
              \bfseries Initial Sine Wave   & 16384  & 4096& 4096&0.03 &0.057 \\
            \bottomrule
        \end{tabular}
    \caption{Number of training samples, total error (at final time) and total error over time, obtained with \emph{wPINNs} (average) predictions in the numerical experiments for the Burgers' equation}
    \label{tab:res}
    }
    \end{table} 
We observe that this initial datum is often used to illustrate the \emph{multiplicity} of weak solutions of hyperbolic conservation laws as the \emph{standing shock}, corresponding to the initial datum is clearly a weak solution but does not satisfy the entropy conditions. To illustrate the rationale behind considering \emph{entropy residuals} \eqref{eq:R-def} in our definitions of the loss function \eqref{eq:hlf} in the \emph{wPINNs} algorithm \ref{alg:wpinn_reset}, we first run the same algorithm but replace the entropy residual \eqref{eq:R-def} in Algorithm \ref{alg:wpinn_reset} with the following residuals,

\begin{figure}
    \begin{subfigure}{.48\textwidth}
        \centering
        \includegraphics[width=1\linewidth]{{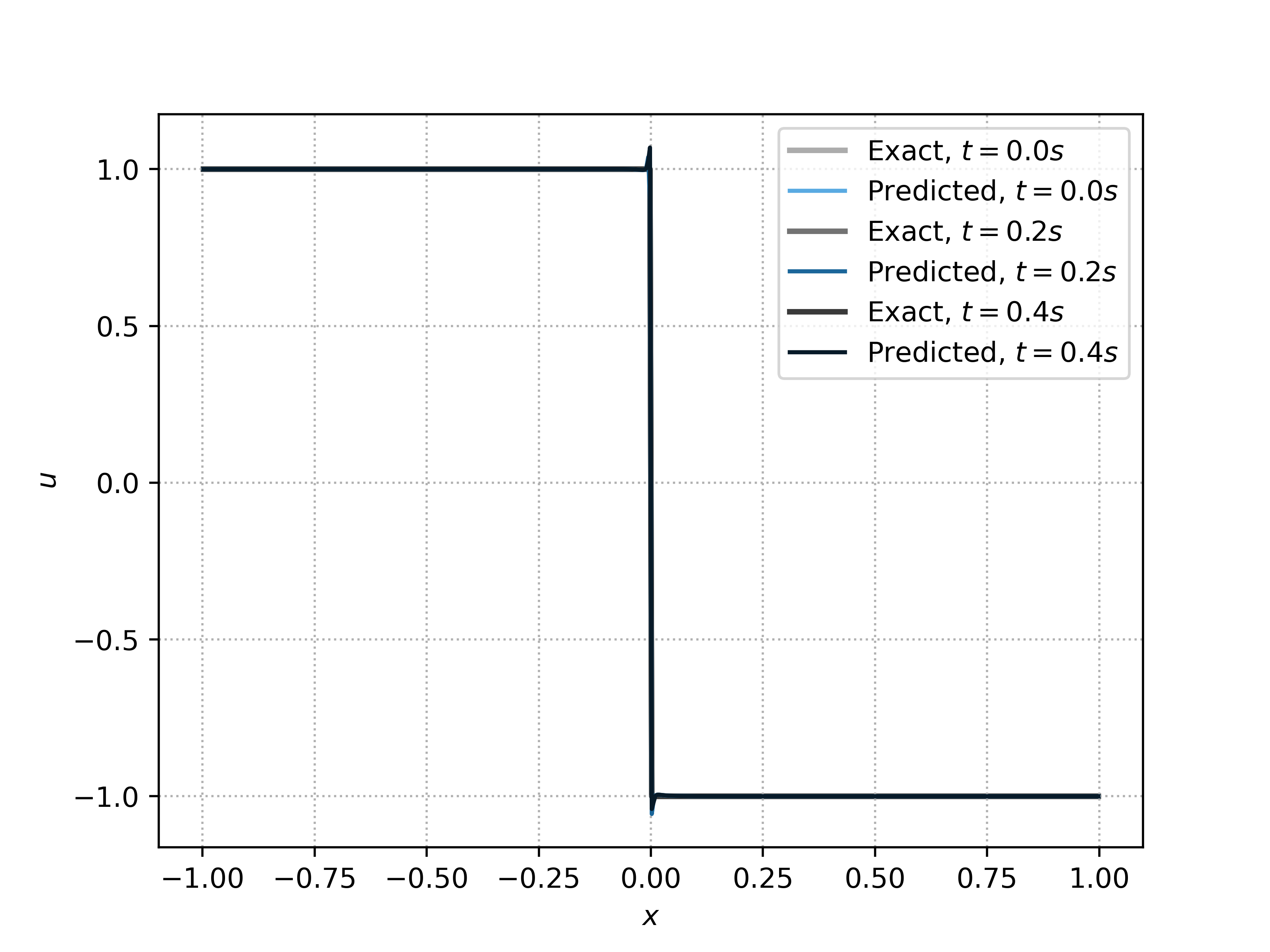}}
        \caption{Standing Shock Solution , $\cE^T_r(\theta^\ast_\S) = 0.0004$}
        \label{fig:standing_best}
    \end{subfigure}
    \begin{subfigure}{.48\textwidth}
        \centering
        \includegraphics[width=1\linewidth]{{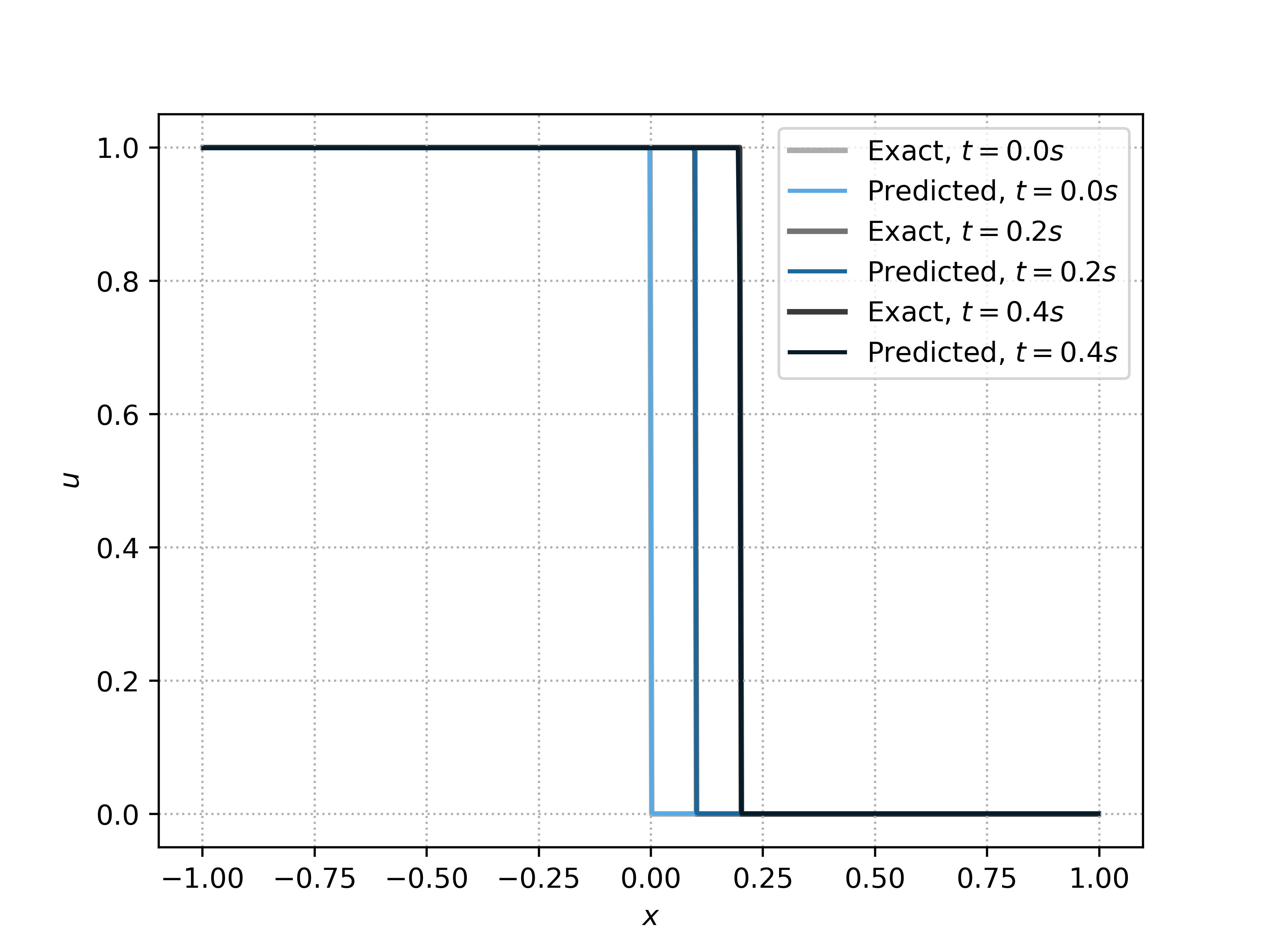}}
        \caption{Moving Shock Solution, $\cE^T_r(\theta^\ast_\S) =0.002$}
        \label{fig:moving_best}
    \end{subfigure}
    \begin{subfigure}{.48\textwidth}
        \centering
        \includegraphics[width=1\linewidth]{{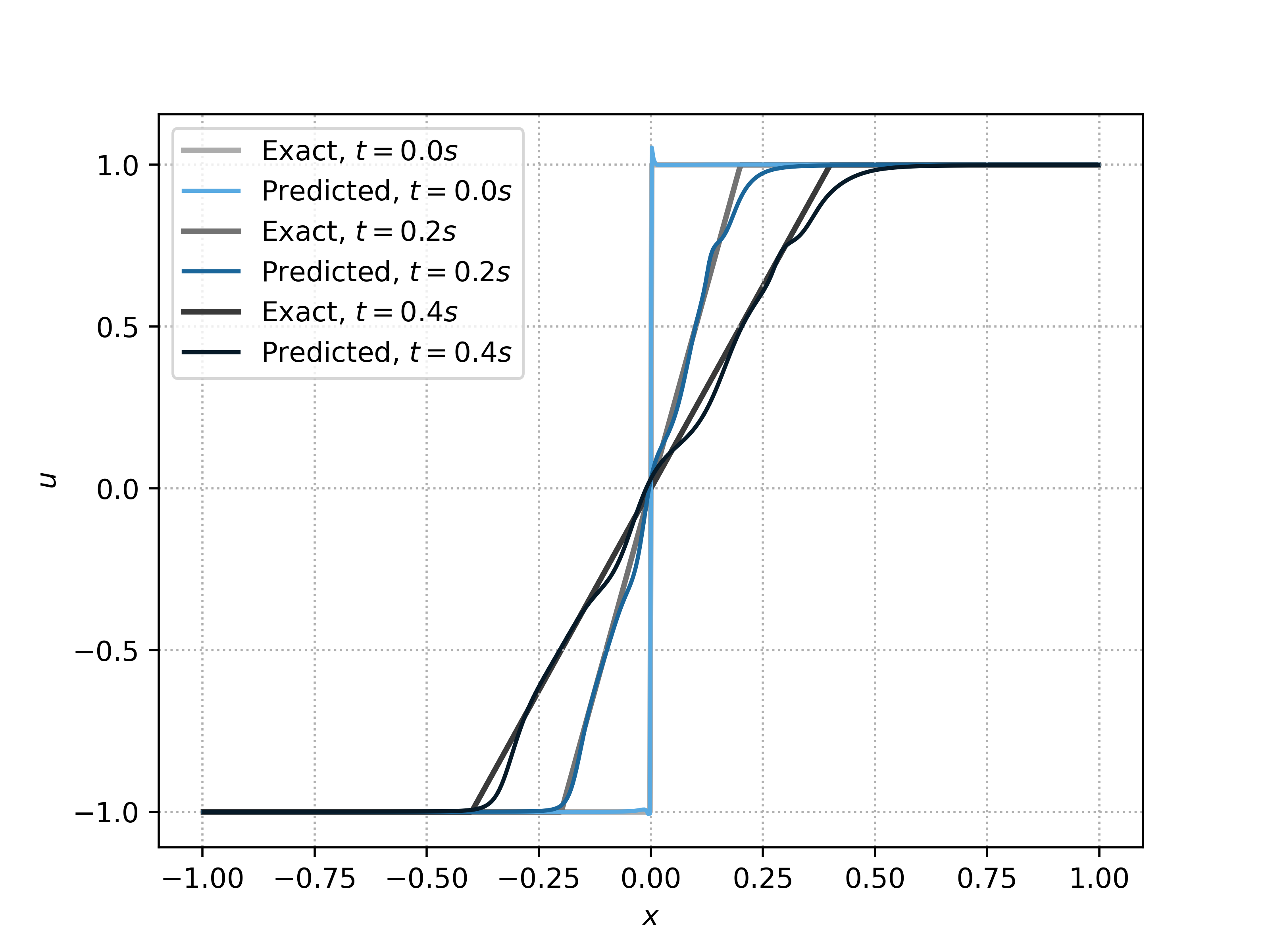}}
        \caption{Rarefaction Wave, $\cE^T_r(\theta^\ast_\S) =0.019$}
        \label{fig:rarefaction_best}
    \end{subfigure}
    \begin{subfigure}{.48\textwidth}
        \centering
        \includegraphics[width=1\linewidth]{{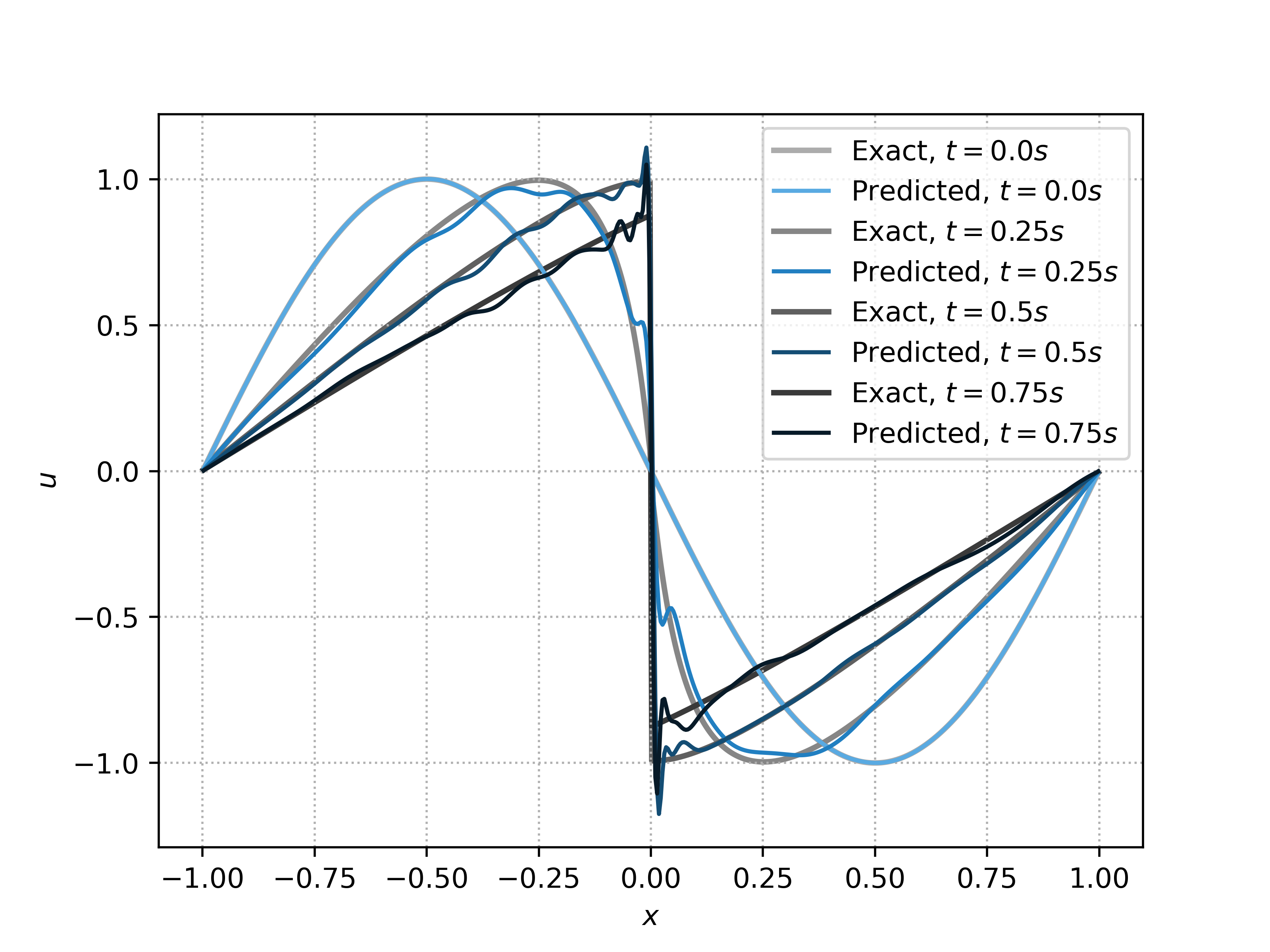}}
        \caption{Initial Sine Wave, $\cE^T_r(\theta^\ast_\S) =0.047$}
        \label{fig:sine_besz}
    \end{subfigure}
    
    \caption{Exact solutions and best predictions obtained with \emph{wPINN} for the Burgers' equation.}
\label{fig:burg_best_overall}
\end{figure}

\begin{equation}\label{eq:weakres_burg}
   r_{int, \theta, \eta} = \int_{D}\int_{[0,T]}\Big( u_{\theta, t}(x,t)\varphi_\eta(x,t) - f(u_\theta(x,t)) \varphi_{\eta, x}(x,t)\Big)dtdx  
\end{equation}
and 
\begin{equation}
    J_{pde}(\theta, \eta) = \frac{\Big(\sum_{m=1}^{M_{int}} r_{int,\theta, \eta}(y_m, c)\Big)^2}{\sum_{m=1}^{M_{int}} \partial_x \varphi_\eta(y_m)^2},
\end{equation} 
that correspond to the standard weak formulation (see definition \ref{def:weak-solution}) of the scalar conservation law. The resulting predictions are plotted in figure \ref{fig:naive_rar} and show that the resulting \emph{wPINN} only approximated the \emph{non-entropic} standing shock solution corresponding to the initial datum. Thus, a naive weak formulation of PINNs does not suffice in accurate approximations of scalar conservation laws. 
\begin{figure}
        \centering
        \includegraphics[width=0.5\linewidth]{{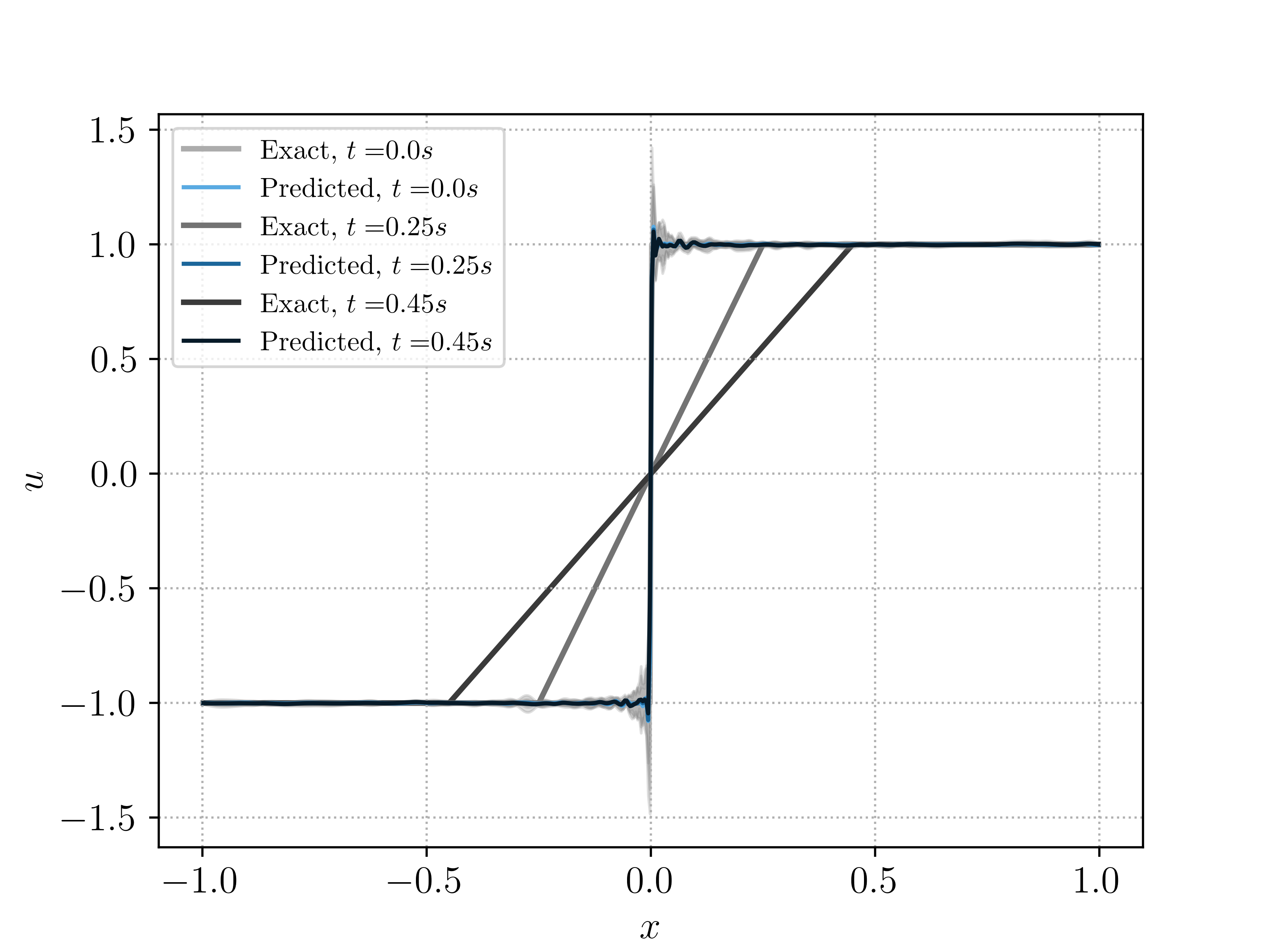}}
    \caption{Exact rarefaction wave solution and prediction obtained with weak PINNs, without entropy conditions incorporated into the weak residual.}
\label{fig:naive_rar}
\end{figure}
On the other hand, the \emph{wPINN} average predictions of algorithm \ref{alg:wpinn_reset}, with the entropy residual \eqref{eq:R-def}, provide accurate approximation of the rarefaction wave entropy solution, as shown in Figure  \ref{fig:rarefaction} and Table  \ref{tab:res}. In particular, the error at the final time $T$ is approximately $2 \%$ on average, whereas the error corresponding to the best hyperparameter configuration (see Figure \ref{fig:burg_best_overall}) is only slightly lower ($1.9 \%$). 
\subsection{Sine Wave Initial Datum}
As a final numerical experiment, we consider the Burgers' equation with the initial data,
$$
{u}_0(x) = -\sin(\pi x)$$ 
and zero Dirichlet boundary conditions in the spatio-temporal domain $[-1,1]\times[0,1]$. The exact solution (approximated in Figure \ref{fig:sine} with a high-resolution finite volume scheme), shows a complex evolution with both steepening as well as expansions of the sine wave that eventually form into a shock wave that separates two rarefactions. We run the \emph{wPINNs} algorithm \ref{alg:wpinn_reset}, with low-discrepancy Sobol points \cite{mishra2021enhancing}, instead of random collocation points. An ensemble training procedure, based on the hyperparameters presented in Table \ref{tab:sine_hyp}. The remaining parameters are set as follows:  $L_\theta=4$, $L_\eta=2$, $l_\theta=20$, $l_\eta=10$,  $\tau_\eta = 0.015$, $\tau_\theta = 0.01$ and $\sin$ activation function for both the networks. The networks are trained for $e=75000$ epochs and parameters reinitialized $n_\theta=15$ times, on account of the more complex underlying solution.

The (average) predictions with the \emph{wPINNs} algorithm \ref{alg:wpinn_reset} are depicted in Figure \ref{fig:sine} and show that this complicated underlying solution is approximated accurately with \emph{wPINNs}, although there are very small spurious oscillations in the approximation. These might be further eliminated by adding additional regularization terms, such as on the BV-norm into the loss function \eqref{eq:hlf}. Nevertheless, as shown in Table \ref{tab:res}, the error  over the entire time period is approximately $3\%$, whereas the error at final time is understandably higher. This should also be contrasted with the relative error of approximately $24\%$, obtained for this particular test case with conventional PINNs, as observed in \cite{MM1} (Figure 6 (d)). Moreover, the error $\cE_r^T$ is even smaller and the approximation significantly more accurate, with the best performing hyperparameter configuration shown in Figure \ref{fig:burg_best_overall}.  
\section{Discussion}
Physics informed neural networks (PINNs) have been extremely successful in approximating both forward and inverse problems, in an unsupervised manner, for a very diverse set of PDEs. Recent theoretical work on PINNs, for instance in \cite{MM1,deryck2021approximation}, suggest that \emph{regularity} of solutions of the underlying PDE is a key requirement in the derivation of estimates on PINN error. In particular, this analysis suggests that the conventional form of PINNs can fail at accurately approximating nonlinear PDEs such as hyperbolic conservation laws, whose solutions are not sufficiently regular. This was further verified in numerical experiments, such as the ones presented in \cite{MM1}. 

Our main aim in this paper was to design a novel variant of PINNs for the accurate approximation of entropy solutions of scalar conservation laws. To this end, 
\begin{itemize}
    \item We base the PDE residual on the weak form of the Kruzkhov entropy conditions \eqref{eq:R-def} and solve the resulting min-max optimization problem for determining parameters (weights and biases), for the neural networks approximating the entropy solution as well as test functions. The resulting PINN is termed as a weak PINN (\emph{wPINN}).
    \item We prove rigorous bounds on different sources of error associated with \emph{wPINNs} in Section \ref{sec:3} to show that the resulting errors can be made arbitrarily small by ensuring a small enough training error, choosing neural networks of suitable size and enoough (random) collocation points.
    \item We present numerical experiments with the Burgers' equation to illustrate that \emph{wPINNs}, with suitable choices of loss functions and training protocol (see section \ref{sec:weak-pinn-training}), can approximate the entropy solutions of scalar conservation laws accurately.
    
\end{itemize}
Thus, we propose a novel unsupervised learning algorithm for approximating scalar conservation laws and show, both theoretically as well as empirically, that it provides an accurate approximation to entropy solutions. Given that, the algorithm was fully unsupervised i.e., no labelled data (solution values in the interior of the space-time domain) are used, \emph{wPINNs} can serve as an alternative to existing high-resolution finite volume, finite difference and discontinuous Galerkin finite element methods for approximating conservation laws. 

Below, we discuss possible shortcomings and extensions of \emph{wPINNs}, 
\begin{itemize}
    \item In comparison to conventional PINNs, \emph{wPINNs} entail the (approximate) solution of a min-max optimization problem. Thus, training \emph{wPINNs} is clearly more computationally expensive than training PINNs as only a minimization problem is solved in the latter. However, given that conventional PINNs can fail to accurately approximate discontinuous solutions of conservation laws, it is imperative to use \emph{wPINNs} in this context. We aim to explore faster training algorithms for solving the min-max optimization problem and plan to take inspiration from the extensive literature on training GANs in this regard. \item A key advantage of machine learning approaches such as conventional PINNs and \emph{wPINNs} is their ability to serve as a fast surrogate, particularly for parametric PDEs (see \cite{BKMM1} for an example of PINNs solving a parametrized KdV equation). Once the \emph{wPINN} has been trained for a (random) sampling of the parameter space, it can infer solutions with respect to other parameters at practically zero computational cost. Moreover, one can also use \emph{wPINNs} within a physics informed operator learning framework \cite{PINO,deryck2022generic} to approximate the semi-group of the underlying solution operator. The use of \emph{wPINNs} in the context of parametric PDEs and operator learning will be considered in future work. 
    \item Lastly, we presented the algorithm and provided error estimates for scalar conservation laws in one space dimension. The extension of the algorithm and estimates to multi-dimensional scalar conservation laws is straightforward as Kruzkhov entropies are well-defined in this case. The extension of \emph{wPINNs} to systems of conservation laws is non-trivial. In particular, there is no analogue of (infinitely many) Kruzkhov entropies for systems. Rather, one has to work with, usually, a single entropy family (for instance the thermodynamic entropy for the compressible Euler equations or total energy for shallow-water equations). Adding such entropies to the residual is straightforward. However, it is unclear if a single entropy will drive the algorithm towards a physically relevant solution. This extension will also be considered in the future.

\end{itemize}

\section*{Acknowledgments}\label{sec:Acknowledgements}
The research of RM and SM was partly performed under a project that has received funding from the European Research Council (ERC) under the European Union’s Horizon 2020 research and innovation programme (grant agreement No. 770880). The authors thank Prof. Ulrik S. Fjordholm (University of Oslo, Norway) and Prof. Ujjwal Koley (TIFR-CAM, Bangalore, India) for insightful discussions.

\bibliographystyle{abbrv}
\bibliography{ref}

\appendix

\section{Auxiliary results}

\subsection{Auxiliary results for Section \ref{sec:32}}

We will use the partition of unity construction using tanh neural networks from \cite{deryck2021approximation}.

\begin{lemma}\label{lem:Phi}
Let $\epsilon>0$ and $1\leq p<\infty$. For any $\varphi\in\Phi_\epsilon$ it holds that $\abs{\varphi}_{W^{1,p}} =  \bigO(\beta^{1+2(p-1)/p})$, $p\in\N$, and $\abs{\varphi}_{W^{1,\infty}} =  \bigO(\beta^{3})$.
\end{lemma}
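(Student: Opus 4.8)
The plan is to estimate the $W^{1,p}$-seminorm of $\overline{\varphi}_\epsilon^{y,s}$ directly from its explicit product form in Definition \ref{def:varphi}, $\overline{\varphi}_\epsilon^{y,s}(x,t) = \chi_\epsilon\!\left(\tfrac{t+s}{2}\right)\rho_\epsilon(x-y)\rho_\epsilon(t-s)$, by differentiating, applying the product rule, and bounding each resulting factor in the appropriate $L^p$ or $L^\infty$ norm. Since $\partial_x \overline{\varphi}_\epsilon^{y,s} = \chi_\epsilon\,\rho_\epsilon'(x-y)\,\rho_\epsilon(t-s)$ and $\partial_t \overline{\varphi}_\epsilon^{y,s} = \tfrac12\chi_\epsilon'\,\rho_\epsilon(x-y)\rho_\epsilon(t-s) + \chi_\epsilon\,\rho_\epsilon(x-y)\rho_\epsilon'(t-s)$, the seminorm is controlled by sums of products of the quantities $\|\chi_\epsilon\|_\infty$, $\|\chi_\epsilon'\|_{L^p}$, $\|\rho_\epsilon\|_{L^p}$, $\|\rho_\epsilon'\|_{L^p}$ (and the $L^\infty$ analogues). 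So the first step is to record scaling estimates for these building blocks.

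First I would analyze $\rho_\epsilon$. Writing $\rho_\epsilon(x) = \tfrac{1}{2\epsilon^6}\big(\sigma(\beta(x+\epsilon^6)) - \sigma(\beta(x-\epsilon^6))\big)$ with $\beta = 9\ln(1/\epsilon)/\epsilon^3$, one sees $\rho_\epsilon$ is a smooth bump of height $\bigO(1/\epsilon^6)$ that, using $1-\sigma(z)^2 = \operatorname{sech}^2(z)$ and the exponential decay of $\operatorname{sech}$, is essentially supported (up to exponentially small tails, controlled by the choice $\beta\epsilon^6 = 9\epsilon^3\ln(1/\epsilon)$) on an interval of length $\bigO(\epsilon^6 + \beta^{-1}\ln(1/\epsilon)) = \bigO(\epsilon^6)$; hence $\|\rho_\epsilon\|_{L^\infty} = \bigO(\epsilon^{-6})$, $\|\rho_\epsilon\|_{L^p}^p = \bigO(\epsilon^{-6p}\cdot\epsilon^6) = \bigO(\epsilon^{-6(p-1)})$, so $\|\rho_\epsilon\|_{L^p} = \bigO(\epsilon^{-6(p-1)/p}) = \bigO(\beta^{(p-1)/p})$ up to logarithmic factors hidden in the $\bigO$ (note $\beta \sim \epsilon^{-3}\ln(1/\epsilon)$, so $\epsilon^{-6} \sim \beta^2$ up to logs; I will absorb logs into the constant as the statement does by writing seminorms in terms of $\beta$). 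For $\rho_\epsilon'$ we pick up an extra factor $\beta$, so $\|\rho_\epsilon'\|_{L^\infty} = \bigO(\beta\epsilon^{-6})$ and $\|\rho_\epsilon'\|_{L^p} = \bigO(\beta\cdot\beta^{(p-1)/p}) = \bigO(\beta^{1+(p-1)/p})$. For $\chi_\epsilon$, which has $\alpha = 3\ln(1/\epsilon)/\epsilon$ and amplitude $\bigO(1)$ (the normalization $1/(2\sigma(\alpha\epsilon))$ is $\bigO(1)$), one gets $\|\chi_\epsilon\|_\infty = \bigO(1)$ and $\|\chi_\epsilon'\|_\infty = \bigO(\alpha) = \bigO(\epsilon^{-1}\ln(1/\epsilon))$, which is $\bigO(\beta^{1/3})$ up to logs and in any case dominated by $\beta$; its $L^p$ norm on $[0,T]$ is $\bigO(\alpha)$ as well.

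Then I would assemble: for the $x$-derivative term, $\|\partial_x\overline{\varphi}_\epsilon^{y,s}\|_{L^p(D\times[0,T])} \le \|\chi_\epsilon\|_{L^p_t} \|\rho_\epsilon'\|_{L^p_x}\|\rho_\epsilon\|_{L^\infty_t}$-type bounds (being careful that the four-variable integral over the two-dimensional domain $D\times[0,T]$ factorizes) give $\bigO(1)\cdot\bigO(\beta^{1+(p-1)/p})\cdot\bigO(\beta^{(p-1)/p}\cdot(\text{from the }L^p_t\text{ slice}))$; carefully the $L^p$-norm over $(x,t)$ of the product $\chi_\epsilon(\tfrac{t+s}{2})\rho_\epsilon'(x-y)\rho_\epsilon(t-s)$ is $\|\chi_\epsilon\|_\infty\|\rho_\epsilon'\|_{L^p}\|\rho_\epsilon\|_{L^p}= \bigO(\beta^{1+(p-1)/p}\cdot\beta^{(p-1)/p}) = \bigO(\beta^{1+2(p-1)/p})$, which is exactly the claimed rate. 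The $t$-derivative contributes two terms: $\tfrac12\chi_\epsilon'\rho_\epsilon\rho_\epsilon$ gives $\|\chi_\epsilon'\|_\infty\|\rho_\epsilon\|_{L^p}^2 = \bigO(\alpha\,\beta^{2(p-1)/p})$, dominated by $\bigO(\beta^{1+2(p-1)/p})$ since $\alpha = \bigO(\beta)$; and $\chi_\epsilon\rho_\epsilon\rho_\epsilon'$ gives $\bigO(\beta^{1+(p-1)/p}\beta^{(p-1)/p}) = \bigO(\beta^{1+2(p-1)/p})$ again. Summing, $\abs{\varphi}_{W^{1,p}} = \bigO(\beta^{1+2(p-1)/p})$. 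The case $p=\infty$ is handled identically by taking $L^\infty$ everywhere: $\|\rho_\epsilon\|_\infty = \bigO(\beta^2)$ and $\|\rho_\epsilon'\|_\infty = \bigO(\beta^3)$ up to logs, and the dominant term $\chi_\epsilon\rho_\epsilon\rho_\epsilon'$ or $\chi_\epsilon\rho_\epsilon'\rho_\epsilon$ is $\bigO(\beta^2\cdot\beta) $... actually $\bigO(\beta^2\cdot\beta^{?})$ — here one should note $\|\rho_\epsilon\|_\infty\|\rho_\epsilon'\|_\infty$ is not the right combination; rather $\|\partial_x\overline{\varphi}\|_\infty \le \|\chi_\epsilon\|_\infty\|\rho_\epsilon'\|_\infty\|\rho_\epsilon\|_\infty = \bigO(1\cdot\beta^3\cdot ?)$. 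The main obstacle is precisely this bookkeeping: getting the exponents of $\epsilon$ versus $\beta$ consistent (since $\beta\sim\epsilon^{-3}$ up to logs, $\|\rho_\epsilon\|_\infty = \bigO(\epsilon^{-6}) = \bigO(\beta^2)$), and confirming that the tails of the sigmoids outside the effective supports are genuinely negligible — i.e. that the truncation built into the definition (the mismatch between $\epsilon^6$ and $\beta^{-1}$, and between $\epsilon$ and $\alpha^{-1}$) is chosen so that $\beta^k\sigma(-\beta\epsilon^6)$-type remainders are $o(1)$. Once the building-block estimates are pinned down with the correct powers, the final assembly is a routine product-rule/Hölder computation.
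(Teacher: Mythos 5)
Your overall strategy---factorizing $\overline{\varphi}_\epsilon^{y,s}(x,t)=\chi_\epsilon\!\left(\tfrac{t+s}{2}\right)\rho_\epsilon(x-y)\rho_\epsilon(t-s)$, applying the product rule, and combining $L^p$/$L^\infty$ bounds on the factors---is the same as the paper's short proof, which rests on $\norm{\overline{\varphi}_\epsilon}_{L^\infty}=\bigO(\beta^2)$, $\norm{\overline{\varphi}_\epsilon}_{L^1}=\bigO(1)$, interpolation to $L^p$, and the observation that each derivative costs a factor $\alpha$ or $\beta$. The genuine gap is in your building-block estimates for $\rho_\epsilon$: you read it as a bump of height $\bigO(\epsilon^{-6})$ concentrated on an interval of length $\bigO(\epsilon^6)$. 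That is not its scaling. Because the sigmoid's transition scale $1/\beta\sim\epsilon^3/\ln(1/\epsilon)$ is much \emph{larger} than the shift $\epsilon^6$, the difference quotient is controlled by the mean value theorem: $\rho_\epsilon(x)=\beta\,\sigma'(\xi)\leq\beta$ and $\rho_\epsilon'(x)=\beta^2\sigma''(\xi)$, so $\norm{\rho_\epsilon}_{L^\infty}=\bigO(\beta)$ (not $\epsilon^{-6}\sim\beta^2$) and $\norm{\rho_\epsilon'}_{L^\infty}=\bigO(\beta^2)$ (not $\beta\epsilon^{-6}\sim\beta^3$). In other words, $\rho_\epsilon$ is essentially a mollifier of height $\beta$, width $1/\beta$ and unit mass ($\norm{\rho_\epsilon}_{L^1}\leq 2$, $\norm{\rho_\epsilon'}_{L^1}\leq 2\beta$), whence by interpolation $\norm{\rho_\epsilon}_{L^p}=\bigO(\beta^{(p-1)/p})$ and $\norm{\rho_\epsilon'}_{L^p}=\bigO(\beta^{1+(p-1)/p})$; these bounds are global, so no separate tail analysis is needed.

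Your finite-$p$ conclusion matches the lemma only through a compensating slip: you write $\norm{\rho_\epsilon}_{L^p}=\bigO(\epsilon^{-6(p-1)/p})=\bigO(\beta^{(p-1)/p})$, but by your own identification $\epsilon^{-6}\sim\beta^2$ this would be $\beta^{2(p-1)/p}$. Carried through consistently, your height/width picture produces exponents strictly larger than those in the statement (for instance $\bigO(\beta^5)$ or worse for $\abs{\varphi}_{W^{1,\infty}}$ instead of $\bigO(\beta^3)$), which is precisely where your argument stalls at the end. With the corrected estimates your own assembly closes the proof: $\norm{\partial_x\overline{\varphi}_\epsilon}_{L^p}\leq\norm{\chi_\epsilon}_{L^\infty}\norm{\rho_\epsilon'}_{L^p}\norm{\rho_\epsilon}_{L^p}=\bigO(\beta^{1+2(p-1)/p})$, the $\chi_\epsilon'$ contribution being dominated since $\norm{\chi_\epsilon'}_{L^\infty}=\bigO(\alpha)$ and $\alpha\leq\beta$, and in the $L^\infty$ case $\norm{\chi_\epsilon}_{L^\infty}\norm{\rho_\epsilon'}_{L^\infty}\norm{\rho_\epsilon}_{L^\infty}=\bigO(1\cdot\beta^2\cdot\beta)=\bigO(\beta^3)$, as claimed.
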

\begin{proof}
One can easily calculate that $\partial_t \overline{\varphi}_\epsilon = \alpha \overline{\varphi}_\epsilon +\beta \overline{\varphi}_\epsilon$ and $\partial_x \overline{\varphi}_\epsilon = \beta \overline{\varphi}_\epsilon$. Moreover, one can easily find that $\norm{\overline{\varphi}_\epsilon}_{L^\infty} = \bigO(\beta^{2})$ and $\norm{\overline{\varphi}_\epsilon}_{L^1} = \bigO(1)$. As a result, one can find that for $p\in\N$ it holds that $\abs{\overline{\varphi}_\epsilon}_{W^{1,p}} = \bigO(\beta^{2(p-1)/p})$. The bound from the statement follows immediately. 
\end{proof}

\begin{lemma}\label{lem:alpha-growth}
Let $N\in\N$ and $0<\epsilon<1$. If we set $\alpha = N\ln(N^2/\epsilon)$ then it holds that, 
\begin{align}\label{eq:alpha}
    \alpha/N \geq 1, \quad 1 - \sigma(\alpha/N) \leq \epsilon, \quad \alpha^m \abs{\sigma^{(m)}(\alpha/N)} \leq \epsilon \text{  for } m=1,2. 
\end{align}
\end{lemma}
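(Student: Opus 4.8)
The plan is to reduce all three claims to elementary facts about $\tanh$ evaluated at the single point $y := \alpha/N = \ln(N^2/\epsilon)$, exploiting that the choice of $\alpha$ is calibrated precisely so that $e^{-2y} = (N^2/\epsilon)^{-2} = \epsilon^2 N^{-4}$ is algebraically small; this small factor will have to absorb both the factor $N^m$ coming from $\alpha^m = (Ny)^m$ and the logarithmic factor $y^m = \ln(N^2/\epsilon)^m$.

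First I would dispatch the inequality $\alpha/N \geq 1$: since $N \geq 1$ and $\epsilon$ is small (it suffices that $N^2/\epsilon \geq e$, which for $N \geq 2$ is automatic as $N^2/\epsilon > 4$, and for $N=1$ amounts to $\epsilon \leq 1/e$), we get $\alpha/N = \ln(N^2/\epsilon) \geq 1$. Next I would record the standard decay bounds, valid for every $y \geq 0$: from $1 - \tanh y = 2/(e^{2y}+1)$ one has $1 - \tanh y \leq 2 e^{-2y}$; from $\sigma'(y) = 1-\tanh^2 y = (1-\tanh y)(1+\tanh y)$ and $1+\tanh y \leq 2$ one has $\sigma'(y) \leq 2(1-\tanh y) \leq 4 e^{-2y}$; and from $\sigma''(y) = -2\tanh(y)\,\sigma'(y)$ together with $|\tanh y|\leq 1$ one has $|\sigma''(y)| \leq 2\sigma'(y) \leq 8 e^{-2y}$. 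In particular $|\sigma^{(m)}(y)| \leq 8 e^{-2y}$ for $m = 1,2$.

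With these in hand the remaining two claims follow by substitution. For the second, $1 - \sigma(\alpha/N) \leq 2 e^{-2\alpha/N} = 2\epsilon^2 N^{-4} \leq 2\epsilon^2 \leq \epsilon$ as soon as $\epsilon \leq 1/2$. For the third, $\alpha^m |\sigma^{(m)}(\alpha/N)| \leq 8\,\alpha^m e^{-2\alpha/N} = 8\, N^m \ln(N^2/\epsilon)^m \cdot \epsilon^2 N^{-4} = 8\, N^{m-4}\ln(N^2/\epsilon)^m \epsilon^2 \leq 8\ln(N^2/\epsilon)^m \epsilon^2$ for $m \in \{1,2\}$ and $N \geq 1$, and it remains to check $\ln(N^2/\epsilon)^m \epsilon \leq 1/8$. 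This holds for $\epsilon$ small enough because $\ln(N^2/\epsilon)^m\epsilon = \bigO\!\left(\epsilon\ln(1/\epsilon)^m + \epsilon\ln(N)^m\right) \to 0$; it is exactly the regime $N \sim \epsilon^{-1}$ (as in Definition~\ref{def:varphi}) in which the lemma is later applied, and there the same elementary estimate applies.

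The point to be careful about — rather than a genuine obstacle — is the interplay between $N$ and $\epsilon$: the clean right-hand side $\epsilon$ forces one to verify that the prefactor $N^{m}\ln(N^2/\epsilon)^m$, which is polynomial in $N$ and only logarithmic in $1/\epsilon$, is genuinely dominated by $e^{2\alpha/N} = N^4/\epsilon^2$, and this is precisely why $\alpha$ was chosen with the extra $N^2$ inside the logarithm. The mild smallness of $\epsilon$ that is used implicitly throughout the paper is what converts the resulting $\bigO(\cdot)$ bounds into the stated inequalities with constant $1$.
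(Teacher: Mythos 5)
Your route --- reducing all three claims to the decay of $\tanh$ at the single point $y=\alpha/N=\ln(N^2/\epsilon)$ via $1-\sigma(y)\le 2e^{-2y}$, $|\sigma'(y)|\le 4e^{-2y}$, $|\sigma''(y)|\le 8e^{-2y}$ and then substituting $e^{-2y}=\epsilon^2/N^4$ --- is the natural self-contained argument; the paper gives no computation at all, only a one-line appeal to an external lemma together with the remark $4/e^2\le 1$ (note $4/e^2=\max_{t>0}t^2e^{-t}$, which is exactly the kind of quantity $\ln(N^2/\epsilon)^2\,\epsilon/N^2$ your estimate produces). So a direct derivation is a legitimate alternative in principle. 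A small remark: the $\epsilon\le 1/2$ caveat in your second claim is avoidable, since $1-\sigma(y)=2e^{-2y}/(1+e^{-2y})$ gives $2\epsilon^2/(1+\epsilon^2)\le\epsilon$ for $N=1$ and $\le \epsilon^2/8$ for $N\ge 2$, unconditionally.

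The genuine gap is the decisive step for $m=1,2$. After (wastefully) discarding the helpful factor $N^{m-4}\le 1$, you need $8\,\epsilon\ln(N^2/\epsilon)^m\le 1$, and you only assert this ``for $\epsilon$ small enough'', pointing to the regime $N\sim\epsilon^{-1}$ in which the lemma is later invoked. But the lemma as stated quantifies over every $N\in\N$ and every $0<\epsilon<1$, and in that generality the missing condition cannot be supplied, because the asserted inequalities are in fact violated there: for $N=1$, $\epsilon=1/e$ one has $\alpha=1$ and $\alpha\,|\sigma'(1)|=1-\tanh^2(1)\approx 0.42>1/e$; for $N=1$, $\epsilon=e^{-2}$ one has $\alpha=2$ and $\alpha^2|\sigma''(2)|=8\tanh(2)\,(1-\tanh^2(2))\approx 0.54>e^{-2}$; and, as you yourself flag, $\alpha/N\ge 1$ fails for $N=1$, $\epsilon>1/e$. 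Smallness of $\epsilon$ is genuinely needed even in the parametrization used later ($N=1/\delta$, $\epsilon=\delta$ in Lemmas \ref{lem:chi} and \ref{lem:rho}): at $\delta=1/2$ the quantity $\alpha^2|\sigma''(\alpha/N)|\approx 2.0$ exceeds $\epsilon=1/2$. So your argument, as written, proves a correct statement only under hypotheses the lemma does not contain, and no argument can do better. To turn the write-up into a proof you must make the restriction explicit: keep the factor $N^{m-4}$ and state the quantitative hypothesis $8\,\epsilon\ln(N^2/\epsilon)^m N^{m-4}\le 1$ (or an explicit threshold $\epsilon\le\epsilon_0$ in the regime $N\ge 1/\epsilon$) under which your chain of inequalities closes, or else weaken the conclusion to $\le C\epsilon$ with an explicit constant, tracked via $\max_{t>0}t^2e^{-t}=4/e^2$ as the paper's citation suggests --- and note that the lemma, and its use downstream, should carry the same restriction.
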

\begin{proof}
The statement follows directly from \cite[Lemma B.4]{deryck2021navierstokes} by using that $4/e^2\leq 1$.
\end{proof}

The following auxiliary results are needed in the proof of Theorem \ref{thm:stability}.

\begin{lemma}\label{lem:chi}
Let $\delta, T>0$, $0<\delta<T/2$, $3\delta \leq z\leq T-3\delta$ and let $f\in  C^1([0,T]\setminus\{z\})$. Define $\chi_\delta(t) = \gamma(\sigma(\alpha (t-2\delta))-\sigma(\alpha (t-T+2\delta)))$ where $\alpha = \ln(1/\delta^3)/\delta$ and $\gamma^{-1} =\sigma(\alpha\delta)$. Then it holds that 
\begin{equation}
    \abs{\int_0^T f(t)\chi'_\delta(t) dt - f(2\delta)+f(T-2\delta)}\leq \left(T\norm{f}_{L^{\infty}([0,T])}+3 \ln(1/\delta)\norm{f'}_{L^{\infty}(B)}\right) \frac{4\delta}{1-\delta}, 
\end{equation}
where $B = [\delta, 3\delta]\cup[T-3\delta, T-\delta]$.
\end{lemma}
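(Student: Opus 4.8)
The plan is to analyze $\chi_\delta'$ directly and split the integral $\int_0^T f(t)\chi_\delta'(t)\,dt$ into a ``bulk'' region where $\chi_\delta'$ is negligibly small and two ``transition'' regions near $t=2\delta$ and $t=T-2\delta$ where $\chi_\delta'$ concentrates and effectively acts like (a smoothed version of) $\delta_{2\delta}-\delta_{T-2\delta}$. First I would compute $\chi_\delta'(t) = \gamma\alpha(\sigma'(\alpha(t-2\delta)) - \sigma'(\alpha(t-T+2\delta)))$ and record the two quantitative facts supplied by Lemma \ref{lem:alpha-growth} (applied with $N=1$, $\epsilon\leftarrow\delta^3$, so that $\alpha=\ln(1/\delta^3)/\delta$): namely $1-\sigma(\alpha\delta)\leq\delta^3$ and $\alpha|\sigma'(\alpha\delta)|\leq\delta^3$, together with $\gamma^{-1}=\sigma(\alpha\delta)\geq 1-\delta^3$, hence $\gamma\leq (1-\delta^3)^{-1}\leq(1-\delta)^{-1}$. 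These give both that $\gamma\alpha\sigma'$ is exponentially small outside $[-\delta,\delta]$-sized windows and that $\gamma\int\alpha\sigma'(\alpha s)\,ds$ over each window is close to $1$.

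Next I would estimate $\int_0^T f(t)\chi_\delta'(t)\,dt$ by writing $\chi_\delta'$ as the difference of the two bumps $g_1(t)=\gamma\alpha\sigma'(\alpha(t-2\delta))$ and $g_2(t)=\gamma\alpha\sigma'(\alpha(t-T+2\delta))$, and handling $\int f g_1$ and $\int f g_2$ symmetrically. For $\int_0^T f(t) g_1(t)\,dt$, I would split $[0,T]$ into $B_1:=[\delta,3\delta]$ (the window around $2\delta$) and its complement. On the complement $[0,T]\setminus B_1$ one has $|\alpha(t-2\delta)|\geq\alpha\delta$, so $g_1(t)\leq\gamma\alpha|\sigma'(\alpha\delta)|\leq\gamma\delta^3\leq\delta^3/(1-\delta)$, contributing at most $T\|f\|_{L^\infty([0,T])}\delta^3/(1-\delta)$ to the integral. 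On $B_1$, I would replace $f(t)$ by $f(2\delta)$ at the cost $\int_{B_1}|f(t)-f(2\delta)|g_1(t)\,dt \leq \|f'\|_{L^\infty(B)}\cdot\sup_{t\in B_1}|t-2\delta|\cdot\int_{B_1}g_1 \leq \|f'\|_{L^\infty(B)}\cdot\delta\cdot\gamma$, using $\int_{B_1}g_1\leq\int_\R\gamma\alpha\sigma'(\alpha s)\,ds = \gamma\leq(1-\delta)^{-1}$; note $f\in C^1([0,T]\setminus\{z\})$ with $3\delta\leq z\leq T-3\delta$ ensures $f$ is $C^1$ on a neighborhood of $B_1$ and of the symmetric window, which is exactly why the derivative norm is taken over $B=[\delta,3\delta]\cup[T-3\delta,T-\delta]$ rather than all of $[0,T]$. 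Then I would replace $\gamma\int_{B_1}g_1$ by $f(2\delta)$ itself: $\big|f(2\delta)\int_{B_1}g_1 - f(2\delta)\big| \leq |f(2\delta)|\cdot(1-\int_{B_1}g_1)$, and $1-\int_{B_1}g_1 = 1 - \gamma(\sigma(\alpha\delta)-\sigma(-\alpha\delta)) = 1-\gamma\sigma(\alpha\delta)+\gamma\sigma(-\alpha\delta)$; since $\gamma\sigma(\alpha\delta)=1$ and $\sigma(-\alpha\delta)=-\sigma(\alpha\delta)$, this equals $\gamma\sigma(-\alpha\delta)\cdot(-1)+\ldots$ — more cleanly, $1-\int_{B_1}g_1 = \gamma(1-\sigma(\alpha\delta)) + \gamma(1-\sigma(\alpha\delta))$ up to sign bookkeeping, which is $O(\gamma(1-\sigma(\alpha\delta))) = O(\delta^3/(1-\delta))$ by Lemma \ref{lem:alpha-growth}. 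Adding the three error contributions for $g_1$, and the mirror-image ones for $g_2$ (which produce $-f(T-2\delta)$), and absorbing $\delta^3\leq\delta$ and constants into the stated form, yields the bound with the factor $4\delta/(1-\delta)$ and the coefficients $T\|f\|_{L^\infty([0,T])}$ and $3\ln(1/\delta)\|f'\|_{L^\infty(B)}$; the $3\ln(1/\delta)$ factor arises because the natural bound on the $B_1$-truncation error is really proportional to the window half-width times $\gamma$, and one wants it phrased in terms of $\delta$ with a logarithmic safety factor coming from $\alpha\delta=3\ln(1/\delta)$ (the effective ``width in $\alpha$-units''), which controls how much of the bump mass can leak.

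The main obstacle I anticipate is the careful bookkeeping of constants so that everything fits under the single clean factor $4\delta/(1-\delta)$ with precisely the coefficients $T\|f\|_{L^\infty([0,T])}$ and $3\ln(1/\delta)\|f'\|_{L^\infty(B)}$: one must be attentive to whether the relevant half-width is $\delta$ or $3\delta$ (the window $B_1=[\delta,3\delta]$ is centered at $2\delta$, so $\sup_{t\in B_1}|t-2\delta|=\delta$, but the distance from $2\delta$ to the endpoints of $[0,T]\setminus B_1$ is also $\delta$, which is what makes $|\alpha(t-2\delta)|\geq\alpha\delta$ outside $B_1$), and to how the three separate small errors ($O(T\|f\|_\infty\delta^3)$ from the tail, $O(\|f'\|_B\delta\gamma)$ from the Taylor step, $O(|f(2\delta)|\delta^3\gamma)$ from the mass defect) combine. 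The estimate $|f(2\delta)|\leq\|f\|_{L^\infty([0,T])}$ lets the third error be absorbed into the first. None of the individual steps is deep — it is an elementary concentration/truncation argument for a sharply peaked mollifier — but getting the final inequality in exactly the advertised packaged form requires patience with the inequality chain.
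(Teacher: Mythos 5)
Your overall route — split $\chi_\delta'$ into the two bumps, isolate the transition windows $[\delta,3\delta]$ and $[T-3\delta,T-\delta]$, use a Taylor/mean-value step there and exponential smallness of $\sigma'$ in the tails via Lemma \ref{lem:alpha-growth} — is exactly the paper's argument. However, two pieces of your bookkeeping do not work as written. First, the instantiation of Lemma \ref{lem:alpha-growth}: with $N=1$ and $\epsilon\leftarrow\delta^3$ the lemma produces $\alpha=\ln(1/\delta^3)$, not $\ln(1/\delta^3)/\delta$, so the claimed bounds $1-\sigma(\alpha\delta)\leq\delta^3$ and $\alpha\abs{\sigma'(\alpha\delta)}\leq\delta^3$ are not justified for the $\alpha$ of this lemma. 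The correct instantiation is $N\leftarrow 1/\delta$, $\epsilon\leftarrow\delta$, which gives only $1-\sigma(\alpha\delta)\leq\delta$ and $\alpha\abs{\sigma'(\alpha\delta)}\leq\delta$; these weaker bounds still suffice (each tail then contributes $T\gamma\norm{f}_{L^\infty}\delta$, as in the paper), but your $\delta^3$ gains should be dropped.

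Second, and more substantively, your mass-defect step is internally inconsistent. Taking the statement's normalization $\gamma^{-1}=\sigma(\alpha\delta)$ at face value, one gets $\int_{B_1}g_1=\gamma\bigl(\sigma(\alpha\delta)-\sigma(-\alpha\delta)\bigr)=2\gamma\sigma(\alpha\delta)=2$, so your main term is $2f(2\delta)$ rather than $f(2\delta)$ and $1-\int_{B_1}g_1=-1$; the ``sign bookkeeping'' you defer cannot repair this. The resolution is that the intended normalization (the one in Definition \ref{def:varphi} and in the paper's own proof) is $\gamma^{-1}=2\sigma(\alpha\delta)$; then $\gamma\alpha\int_\delta^{3\delta}\sigma'(\alpha(t-2\delta))\,dt=1$ exactly, the mass-defect term vanishes identically, and the only window error is the Taylor remainder, which the paper bounds by $\gamma\alpha\delta^2\norm{f'}_{L^\infty([\delta,3\delta])}=3\gamma\ln(1/\delta)\,\delta\,\norm{f'}_{L^\infty(B)}$ — this is precisely the origin of the $3\ln(1/\delta)$ factor in the statement, not a ``logarithmic safety factor''. (Relatedly, $\int_\R\gamma\alpha\sigma'(\alpha s)\,ds=2\gamma$, not $\gamma$.) With $\gamma\leq\tfrac{1}{2(1-\delta)}$, summing the two bumps then yields the advertised bound with the factor $\tfrac{4\delta}{1-\delta}$.
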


\begin{proof}

Define $\alpha = \ln(1/\delta^3)/\delta$ and $\gamma^{-1} = \alpha \int_{-\delta}^\delta\sigma'(\alpha s) ds = 2\sigma(\alpha \delta) \geq 2(1-\delta)$, where the last inequality follows from Lemma \ref{lem:alpha-growth} with $N=\epsilon \leftarrow \delta$. Assume that $z\geq 3\delta$. Taylor's theorem then guarantees the existence of $\zeta_t\in [\delta,3\delta]$ such that,
\begin{equation}
    \int_\delta^{3\delta} f(t) \gamma \alpha \sigma'(\alpha (t-2\delta)) dt = f(2\delta)\gamma \alpha \int_\delta^{3\delta}\sigma'(\alpha (t-2\delta)) dt + \gamma\alpha \int_\delta^{3\delta} f'(\zeta_t)(t-2\delta) \sigma'(\alpha (t-2\delta)) dt.
\end{equation}
From this, it follows using the definition of $\gamma$ that,
\begin{equation}
    \begin{split}
        \abs{\int_\delta^{3\delta} f(t) \gamma \alpha \sigma'(\alpha (t-2\delta)) dt-f(2\delta)} &\leq \gamma \alpha \delta^2 \norm{f'}_{L^\infty([\delta, 3\delta])}. 
    \end{split}
\end{equation}
Moreover, it follows from Lemma \ref{lem:alpha-growth} that,
\begin{equation}
    \abs{\int_{[0,\delta]\cup[3\delta,T]} f(t) \gamma \alpha \sigma'(\alpha (t-2\delta)) dt} \leq T\gamma \norm{f}_\infty \delta.
\end{equation}
Similarly we can find for $A = [T-3\delta, T-\delta]$ that, 
\begin{equation}
    \abs{\int_{0}^T f(t) \gamma \alpha \sigma'(\alpha (t-T+2\delta)) dt-f(T-2\delta)} \leq \gamma \alpha \delta^2 \norm{f'}_{L^\infty(A)} + T\gamma \norm{f}_\infty \delta.
\end{equation}
Conclude by combining all the previous inequalities.
\end{proof}

\begin{lemma}\label{lem:rho}
Let $0<\epsilon<\min\{1,b-a\}$ and $f\in  C^1([a,b])$. Define
\begin{equation}
    \rho_\epsilon(x) = \frac{\sigma(\beta(x+{\epsilon^2}))-\sigma(\beta(x-{\epsilon^2}))}{2\epsilon^2},
\end{equation}
where $\beta = -3\ln(\epsilon)/\epsilon$ and $\omega(t) = \sigma(\beta(t-\max\{a,t-\epsilon\}))-\sigma(\beta(t-\min\{b,t+\epsilon\}))$, for which it holds that $1-\epsilon \leq \omega(t) \leq 2$. Then it holds that,
\begin{equation}
    \abs{\int_a^b f(s) \rho_\epsilon(t-s)ds - \omega(t) f(t)} \leq 20\norm{f}_{C^{1}([a,b])} (b-a-\ln(\epsilon))\epsilon.
\end{equation}
\end{lemma}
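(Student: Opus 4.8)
The plan is to treat $\rho_\epsilon$ as a mollifier of total mass $2$ concentrated on a scale $O(\beta^{-1})\ll\epsilon$ and to recognise $\omega(t)$ as, up to exponentially small corrections, the portion of the mass of $\rho_\epsilon(t-\cdot)$ that survives when this mollifier is truncated to $[a,b]$. After the substitution $u=t-s$ and the splitting $f(t-u)=f(t)+(f(t-u)-f(t))$, the quantity to estimate equals $f(t)E_1+E_2$, where $E_1=\int_{t-b}^{t-a}\rho_\epsilon(u)\,du-\omega(t)$ and $E_2=\int_{t-b}^{t-a}(f(t-u)-f(t))\rho_\epsilon(u)\,du$. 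The basic tool for both terms is the identity
\[
  \rho_\epsilon(u)=\frac{1}{2\epsilon^2}\int_{u-\epsilon^2}^{u+\epsilon^2}\beta\,\sigma'(\beta v)\,dv,
\]
which, via Fubini, turns $u$-integrals of $\rho_\epsilon$ into weighted integrals of $\beta\sigma'(\beta\cdot)$.

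For $E_2$ I would use $\abs{f(t-u)-f(t)}\le\norm{f'}_{L^\infty([a,b])}\abs{u}$ and bound $\int_{t-b}^{t-a}\abs{u}\rho_\epsilon(u)\,du$. One clean route is direct: by the identity above and Fubini, $\int_{\R}\abs{u}\rho_\epsilon(u)\,du\le\int_{\R}\beta\sigma'(\beta v)(\abs{v}+\epsilon^2)\,dv=\beta^{-1}\!\int_{\R}\abs{w}(1-\sigma(w)^2)\,dw+2\epsilon^2=O(\beta^{-1})=O(\epsilon/\ln(1/\epsilon))$, since $\int_{\R}\abs{w}(1-\sigma(w)^2)\,dw=2\ln2$. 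To match the exact form of the stated bound one splits instead at $\abs{u}=\epsilon$: the central part contributes at most $2\epsilon$ (total mass $\le 2$) and the outer part at most $(b-a)\int_{\abs{u}>\epsilon}\rho_\epsilon(u)\,du=(b-a)\cdot O(\epsilon)$, using the tail bound $1-\sigma(x)\le 2e^{-2x}$ together with $\beta\epsilon=3\ln(1/\epsilon)$; this is the source of the factor $b-a$ in the final estimate. Either way $\abs{E_2}\lesssim\norm{f'}_{L^\infty([a,b])}(b-a)\epsilon$.

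For $E_1$, Fubini gives $\int_{t-b}^{t-a}\rho_\epsilon(u)\,du=\frac{1}{2\epsilon^2}\int_{\R}\beta\sigma'(\beta v)\,\bigl\lvert[t-b,t-a]\cap[v-\epsilon^2,v+\epsilon^2]\bigr\rvert\,dv$; the length factor agrees with $2\epsilon^2\,\1_{[t-b,t-a]}(v)$ except on two windows of width $2\epsilon^2$ about $v=t-a$ and $v=t-b$, where it differs by at most $2\epsilon^2$, and since $\beta\sigma'\le\beta$ this yields $\int_{t-b}^{t-a}\rho_\epsilon(u)\,du=\sigma(\beta(t-a))-\sigma(\beta(t-b))+O(\beta\epsilon^2)=\sigma(\beta(t-a))-\sigma(\beta(t-b))+O(\epsilon\ln(1/\epsilon))$. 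Rewriting $\omega(t)=\sigma(\beta\min\{\epsilon,t-a\})-\sigma(\beta\max\{-\epsilon,t-b\})$, one checks $\abs{\sigma(\beta(t-a))-\sigma(\beta(t-b))-\omega(t)}\le 4\epsilon^6$: if $t-a<\epsilon$ the first terms coincide, while if $t-a\ge\epsilon$ both $\sigma(\beta(t-a))$ and $\sigma(\beta\epsilon)$ lie within $2\epsilon^6$ of $1$ (since $\beta\epsilon=3\ln(1/\epsilon)$ and $1-\sigma(x)\le2e^{-2x}$), and symmetrically for the $t-b$ terms; hence $\abs{E_1}\lesssim\epsilon\ln(1/\epsilon)$. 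The same endpoint analysis delivers $1-\epsilon\le\omega(t)\le2$ for $t\in[a,b]$: trivially $\omega(t)<2$ as $\sigma<1$, and since $(t-a)+(b-t)=b-a>\epsilon$ one has $\max\{t-a,b-t\}>\epsilon/2$, so $\omega(t)\ge\sigma(\beta\epsilon/2)=\tanh(\tfrac32\ln(1/\epsilon))\ge1-\epsilon$.

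Combining the two bounds, $\abs{f(t)E_1+E_2}\lesssim\norm{f}_{C^1([a,b])}\bigl((b-a)\epsilon+\epsilon\ln(1/\epsilon)\bigr)=\norm{f}_{C^1([a,b])}(b-a-\ln\epsilon)\epsilon$, and carrying the explicit small absolute constants through the three steps and collecting them into $20$ (the degenerate regime $\epsilon$ close to $1$, where $\beta$ is small, is handled by the trivial bound $\le 4\norm{f}_{C^1}$ since both $\int\rho_\epsilon$ and $\omega$ are $\le 2$) completes the proof. The routine part is the $\epsilon$-power bookkeeping; the one point that genuinely needs care is the truncation near the endpoints $a,b$ — one must verify that the $\min$/$\max$ clipping in the definition of $\omega$ accounts exactly, up to exponentially small error and uniformly in $t$ (including $t$ within $O(\epsilon)$ of an endpoint), for the mass of the mollifier lost outside $[a,b]$, which works precisely because $\rho_\epsilon$ concentrates on a scale much finer than $\epsilon$.
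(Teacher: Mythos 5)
Your argument is correct, and it is organized differently from the paper's proof. The paper first replaces the difference quotient $\rho_\epsilon(t-s)$ by the kernel $\beta\sigma'(\beta(t-s))$ via a second-order Taylor estimate using $\norm{\sigma'''}_\infty$ (cost $\lesssim (b-a)\norm{f}_\infty\epsilon$), then Taylor-expands $f$ around $t$ on the near region $A=[a,b]\cap[t-\epsilon,t+\epsilon]$, so that $\omega(t)=\beta\int_A\sigma'(\beta(t-s))\,ds$ appears \emph{exactly} as the coefficient of $f(t)$, and finally estimates the derivative remainder and the tail over $[a,b]\setminus A$ using the decay of $\sigma'$. You instead keep $\rho_\epsilon$ throughout, use the exact identity $\rho_\epsilon(u)=\tfrac{1}{2\epsilon^2}\int_{u-\epsilon^2}^{u+\epsilon^2}\beta\sigma'(\beta v)\,dv$ with Fubini, and split the error into a mass-defect term $f(t)E_1$ and a modulus-of-continuity term $E_2$; in your route $\omega(t)$ is only recovered up to $O(\beta\epsilon^2)=O(\epsilon\ln(1/\epsilon))$ plus the exponentially small endpoint corrections $O(\epsilon^6)$, but this lands at the same order as the stated bound, and your first-moment/tail estimates for $E_2$ play the role of the paper's Taylor remainder and region-$B$ tail. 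Both proofs are valid mollifier arguments; the paper's buys an exact appearance of $\omega(t)$ at the price of the $\sigma'''$ replacement step, while yours avoids that replacement at the price of an extra $O(\epsilon\ln(1/\epsilon))$ mass-defect term and slightly more endpoint bookkeeping. Your constant accounting is asserted rather than carried out in detail (as is the paper's), and two small points deserve a line each if written up: the tail of $\rho_\epsilon$ should be evaluated at $\beta(\epsilon-\epsilon^2)$ rather than $\beta\epsilon$, and the inequality $\tanh\!\big(\tfrac32\ln(1/\epsilon)\big)\geq 1-\epsilon$ needs a brief justification uniformly up to $\epsilon\to 1$ (it does hold, since $\ln(1/\epsilon)\geq 1-\epsilon$); neither affects the validity of the argument.
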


\begin{proof}
First we observe that for any $\epsilon>0$ it holds that,
\begin{equation}\label{eq:rho-taylor}
\begin{split}
    &\abs{ \int_a^b f(s) \beta\sigma'(\beta(t-s))ds- \int_a^b f(s) \frac{\sigma(\beta(t-s+\epsilon^2))-\sigma(\beta(t-s-\epsilon^2))}{2\epsilon^2}ds} \\
    &\quad\leq 2(\epsilon^2\beta)^2\norm{\sigma'''}_\infty \norm{f}_{L^1} \leq 20(b-a) \norm{f}_{\infty}\epsilon, 
\end{split}
\end{equation}
where for the last inequality we used that $\norm{\sigma'''}_\infty\leq 2$ and $\epsilon \ln(1/\epsilon^3)^2\leq 5$ for $0<\epsilon<1$.

Next, we let $A = [a,b]\cap [t-\epsilon,t+\epsilon]$ and $B = [a,b]\setminus [t-\epsilon,t+\epsilon]$. Taylor's theorem then guarantees the existence of $\zeta_t\in A$ such that,
\begin{equation}
    \int_A f(s) \beta\sigma'(\beta(t-s))ds = f(t) \int_A \beta\sigma'(\beta(t-s))ds + \beta \int_A f'(\zeta_t)(s-t) \sigma'(\beta (t-s)) ds
\end{equation}
Using that if $\epsilon\leq b-a$ then, 
\begin{equation}
    2 \geq \omega(t) = \beta \int_A \sigma'(\beta (t-s)) ds  \geq \beta \int_0^{\epsilon}\sigma'(\beta s)ds = \sigma(\beta\epsilon)
    \geq 1-\epsilon,
\end{equation}
together with Lemma \ref{lem:alpha-growth} gives us,
\begin{equation}
    \begin{split}
        &\abs{\int_a^b f(s) \beta\sigma'(\beta(t-s))ds - \omega(t)f(t)}\\
        &\quad\leq \beta \int_A \abs{f'(\zeta_t)(s-t) \sigma'(\beta (t-s))} ds + \beta \int_B \abs{f(s) \sigma'(\beta (t-s))} ds\\
        &\quad\leq \frac{ \beta  \norm{f'}_{L^\infty(A)}}{N^2} +  (b-a) \norm{f}_{\infty} \epsilon\\
        &\quad\leq \norm{f}_{C^{1}([a,b])}(b-a+\ln(1/\epsilon^3))\epsilon.
    \end{split}
\end{equation}
\end{proof}

\begin{lemma}\label{lem:smooth-abs}
Let $\eta>0$. Define the function $\abs{\cdot}_\eta: \R\to [0,\infty): x\mapsto \abs{x}_\eta = \sqrt{x^2+\eta^2}$. It holds that
\begin{equation}\label{eq:abs-eta}
    0\leq \abs{x}_\eta-\abs{x} \leq \eta, \qquad \abs{\frac{d}{dx}\abs{x}_\eta}\leq 1.
\end{equation}
\end{lemma}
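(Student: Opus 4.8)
The plan is to verify the two claims in \eqref{eq:abs-eta} by elementary computation, handling them separately.

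First I would establish the two-sided bound $0\leq \abs{x}_\eta-\abs{x}\leq \eta$. The lower bound is immediate from monotonicity of the square root together with $x^2+\eta^2\geq x^2$, which gives $\sqrt{x^2+\eta^2}\geq \sqrt{x^2}=\abs{x}$. For the upper bound I would either invoke subadditivity of the square root on $[0,\infty)$, i.e.\ $\sqrt{a+b}\leq\sqrt{a}+\sqrt{b}$ applied with $a=x^2$ and $b=\eta^2$, or equivalently square the desired inequality $\sqrt{x^2+\eta^2}\leq \abs{x}+\eta$ and observe that it reduces to the trivially true $0\leq 2\eta\abs{x}$.

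Next I would treat the derivative bound. Since $\eta>0$, the map $x\mapsto\sqrt{x^2+\eta^2}$ is differentiable on all of $\R$ with $\frac{d}{dx}\abs{x}_\eta=\frac{x}{\sqrt{x^2+\eta^2}}$; taking absolute values and using the already-established inequality $\sqrt{x^2+\eta^2}\geq\abs{x}$ yields $\abs{\frac{d}{dx}\abs{x}_\eta}=\frac{\abs{x}}{\sqrt{x^2+\eta^2}}\leq 1$.

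There is no genuine obstacle here: this is a routine regularization fact. The only points requiring a word of care are invoking subadditivity of the square root (or the equivalent squaring trick) for the upper bound, and noting that $\eta>0$ guarantees the derivative exists everywhere, including at $x=0$.
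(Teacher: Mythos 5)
Your proof is correct and essentially matches the paper's argument: the derivative bound is handled identically, and your subadditivity/squaring argument for $\abs{x}_\eta-\abs{x}\leq\eta$ is just a minor rearrangement of the paper's difference-of-squares factorization $(\abs{x}_\eta-\abs{x})(\abs{x}_\eta+\abs{x})=\eta^2$ combined with $\abs{x}_\eta+\abs{x}\geq\eta$. No gaps; nothing further is needed.
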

\begin{proof}
The first set of inequalities follows from 
\begin{equation}
    0\leq (\abs{x}_\eta-\abs{x})(\abs{x}_\eta+\abs{x}) = \abs{x}^2_\eta-\abs{x}^2 = \eta^2 \qquad \text{and}\qquad \abs{x}_\eta+\abs{x} \geq \eta,
\end{equation}
and the other inequality follows from $\abs{\frac{d}{dx}\abs{x}_\eta} = \frac{\abs{x}}{\sqrt{x^2+\eta^2}} \leq 1. $
\end{proof}

\begin{lemma}\label{lem:partial-int-t}
Let $B>0$, $0<\epsilon<1$, $T>4\epsilon$, $z\in W^{1,\infty}(([0,1]\times[0,T])^2)$ with $\abs{z(x,t,y,s)}\leq B$ for all $(x,t), (y,s)\in [0,1]\times[0,T]$ and $\overline{\varphi}_\epsilon^{y,s}$, $(y,s)\in [0,1]\times[0,T]$, as defined in \eqref{eq:varphi}. There exists an constant $C>0$ (independent of $z$ and $\epsilon$) such that,
\begin{equation}
    \abs{\int_0^1\int_0^T\int_0^1\int_0^T\left(\overline{\varphi}_\epsilon^{y,s}(x,t) \partial_t z(x,t,y,s) + z(x,t,y,s)\partial_t \overline{\varphi}_\epsilon^{y,s}(x,t) \right)dt dx ds dy} \leq CB\epsilon.
\end{equation}
\end{lemma}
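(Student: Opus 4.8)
I would treat Lemma~\ref{lem:partial-int-t} as essentially a statement about the temporal boundary traces of $\overline{\varphi}_\epsilon^{y,s}$, with almost no ``integration by parts'' actually needed. By the product rule, for a.e.\ fixed $(x,y,s)$ the integrand equals $\partial_t\big(z(x,\cdot,y,s)\,\overline{\varphi}_\epsilon^{y,s}(x,\cdot)\big)(t)$, using that $t\mapsto z(x,t,y,s)$ is Lipschitz (a slice of a $W^{1,\infty}$ function) while $t\mapsto\overline{\varphi}_\epsilon^{y,s}(x,t)$ is $C^1$. Hence the inner $t$-integral is $(z\overline{\varphi}_\epsilon^{y,s})(x,T,y,s)-(z\overline{\varphi}_\epsilon^{y,s})(x,0,y,s)$ by the fundamental theorem of calculus, and since $|z|\le B$ and $\overline{\varphi}_\epsilon^{y,s}\ge 0$ (Definition~\ref{def:varphi}) the quantity to be estimated is at most
\[
B\int_0^1\int_0^T\int_0^1\Big(\overline{\varphi}_\epsilon^{y,s}(x,T)+\overline{\varphi}_\epsilon^{y,s}(x,0)\Big)\,dx\,ds\,dy .
\]

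Next I would insert the product form \eqref{eq:varphi}, $\overline{\varphi}_\epsilon^{y,s}(x,t)=\chi_\epsilon\big(\tfrac{t+s}{2}\big)\rho_\epsilon(x-y)\rho_\epsilon(t-s)$. Since $\rho_\epsilon\ge 0$ is even with $\int_\R\rho_\epsilon=2$, the $x$- and $y$-integrals are harmless ($\int_0^1\rho_\epsilon(x-y)\,dx\le 2$, $\int_0^1 dy=1$), so the display above is bounded by $2B$ times
\[
\int_0^T\chi_\epsilon\!\Big(\tfrac{s}{2}\Big)\rho_\epsilon(s)\,ds+\int_0^T\chi_\epsilon\!\Big(\tfrac{T+s}{2}\Big)\rho_\epsilon(T-s)\,ds ,
\]
and the substitution $\tau=T-s$ turns the second integral into $\int_0^T\chi_\epsilon(T-\tfrac{\tau}{2})\rho_\epsilon(\tau)\,d\tau$. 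It then suffices to show each integral is $O(\epsilon)$ (in fact $O(\epsilon^6)$).

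The heart of the matter is a separation of scales. First, $\rho_\epsilon$ is concentrated in a window of width $\sim\epsilon^6$ about $0$: because $\beta=9\ln(1/\epsilon)/\epsilon^3$, for $|s|\ge 2\epsilon$ one has $\beta(|s|-\epsilon^6)\gg 1$, so $1-\tanh\big(\beta(|s|-\epsilon^6)\big)$, hence $\rho_\epsilon(s)$, is smaller than any fixed power of $\epsilon$, giving $\int_{2\epsilon}^T\rho_\epsilon\le C\epsilon$ with huge room to spare. Second, on the complementary range $0\le s\le 2\epsilon$ the argument $s/2$ of $\chi_\epsilon$ satisfies $s/2-2\epsilon\le-\epsilon$ and $s/2-T+2\epsilon\le 3\epsilon-T\le-\epsilon$ (here the hypotheses $T>4\epsilon$ and $\epsilon<1$ are used), so both arguments of the two $\sigma$'s appearing in $\chi_\epsilon(s/2)=\tfrac{1}{2\sigma(\alpha\epsilon)}\big(\sigma(\alpha(s/2-2\epsilon))-\sigma(\alpha(s/2-T+2\epsilon))\big)$ are $\le-\alpha\epsilon$; since $\alpha\epsilon=3\ln(1/\epsilon)$, Lemma~\ref{lem:alpha-growth} (or directly $1-\tanh u\le 2e^{-2u}$) shows each $\sigma$ is within $O(\epsilon^6)$ of $-1$, whence $|\chi_\epsilon(s/2)|\le C\epsilon^6$. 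Splitting $\int_0^T$ at $s=2\epsilon$ and using $\chi_\epsilon\le 2$ and $\int_\R\rho_\epsilon=2$ on the two pieces gives $\int_0^T\chi_\epsilon(s/2)\rho_\epsilon(s)\,ds\le C\epsilon^6$. The term $\int_0^T\chi_\epsilon(T-\tau/2)\rho_\epsilon(\tau)\,d\tau$ is identical: for $\tau\le 2\epsilon$ one checks $T-\tau/2-2\epsilon\ge T-3\epsilon\ge\epsilon$ and $2\epsilon-\tau/2\ge\epsilon$, so both $\sigma$-arguments in $\chi_\epsilon(T-\tau/2)$ are $\ge\alpha\epsilon$, forcing them within $O(\epsilon^6)$ of $+1$ and hence $|\chi_\epsilon(T-\tau/2)|\le C\epsilon^6$. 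Combining everything yields the bound $CB\epsilon$.

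The only genuinely delicate point — which I expect to be the main obstacle in writing this out carefully — is the bookkeeping in the $\tanh$ estimates: one must verify that on the effective support $s\le 2\epsilon$ of $\rho_\epsilon$ the two $\sigma$-arguments are bounded away from $0$ by a full $\alpha\epsilon$ (this is exactly where $T>4\epsilon$ enters), so that $\chi_\epsilon$ really vanishes to order $e^{-2\alpha\epsilon}=\epsilon^6$ there, rather than being merely $O(1)$ as it is at the true transition point $t=2\epsilon$. Once this is in place, everything else is routine estimation with the explicit formulas in \eqref{eq:varphi}.
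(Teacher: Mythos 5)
Your proposal is correct and follows essentially the same route as the paper: reduce the quadruple integral to the temporal boundary traces at $t=0$ and $t=T$ (your product-rule/FTC step is the paper's integration by parts plus Fubini), then bound $\int_0^T\chi_\epsilon(\tfrac{s}{2})\rho_\epsilon(s)\,ds$ and its $t=T$ counterpart by splitting the $s$-integral and exploiting the explicit tanh structure. The only differences are cosmetic — you split at $s=2\epsilon$ and use exponential tail bounds on $\tanh$ (yielding the sharper $O(\epsilon^6)$), whereas the paper splits at $s=\epsilon$ and uses monotonicity of $\chi_\epsilon,\rho_\epsilon$ together with $\chi_\epsilon(\epsilon)\lesssim\epsilon$ and $\rho_\epsilon(\epsilon)\lesssim\epsilon^3$; both give the stated $CB\epsilon$ bound (with constants implicitly requiring $\epsilon$ bounded away from $1$, a caveat shared by the paper's own estimates).
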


\begin{proof}
Using integration by parts and Fubini's theorem we find that,
\begin{equation}\label{eq:integration-parts}
\begin{split}
    &\int_0^1\int_0^T\int_0^1\int_0^T\left(\overline{\varphi}_\epsilon^{y,s}(x,t) \partial_t z(x,t) + z(x,t)\partial_t \overline{\varphi}_\epsilon^{y,s}(x,t) \right)dt dx ds dy\\
    &\quad = \int_0^1\int_0^1\int_0^T \left(z(x,T)\overline{\varphi}_\epsilon^{y,s}(x,T)   - z(x,0) \overline{\varphi}_\epsilon^{y,s}(x,0) \right) dsdxdy.
\end{split}
\end{equation}
We will now bound the absolute value of both terms on the RHS of the above equation. Both terms can be bounded in a similar way, we only show the proof for the upper bound of the second term. Observe that $\chi_\epsilon$ is increasing on $[0,\epsilon]$ and $\rho_\epsilon$ is decreasing on $[0,T]$. Moreover, it holds that $\overline{\varphi}_\epsilon^{y,s}\geq 0$. 
Using this information we find for any $x,y\in[0,1]$ that,
\begin{equation}\label{eq:zx0}
    \begin{split}
        \abs{\int_0^T z(x,0) \overline{\varphi}_\epsilon^{y,s}(x,0)ds} &\leq B\rho_\epsilon(x-y) \int_0^T \chi_\epsilon\left(\frac{s}{2}\right)\rho_\epsilon(s)ds \\
        &\leq B\rho_\epsilon(x-y)\left[\chi_\epsilon\left(\epsilon\right)\int_0^\epsilon \rho_\epsilon(s)ds+\rho_\epsilon(\epsilon)\int_\epsilon^T \chi_\epsilon\left(\frac{s}{2}\right)ds\right].
    \end{split}
\end{equation}
From Lemma \ref{lem:chi} and Lemma \ref{lem:rho} it follows that the two integrals above are at most 2. Furthermore if $T>4\epsilon$ it holds that,
\begin{equation}
    \chi_\epsilon\left(\epsilon\right) = \frac{1}{2\sigma(\alpha\epsilon)}(\sigma(-\alpha \epsilon)-\sigma(\alpha (3\epsilon-T))) \leq (1-(1-\epsilon)) = \epsilon.
\end{equation}
By using a Taylor approximation (cf. \eqref{eq:rho-taylor} in the proof of Lemma \ref{lem:rho}) we find that,
\begin{equation}
    \rho_\epsilon(\epsilon)\leq \rho_\epsilon(\epsilon^3) \leq  \beta\sigma'(\beta\epsilon^3) + 20\epsilon^3 \leq 21\epsilon^3,
\end{equation}
where the second inequality follows from the definition of $\beta$ and Lemma \ref{lem:alpha-growth}. 
As a result we find from \eqref{eq:zx0} that there exists an absolute constant $C>0$ such that,
\begin{equation}
     \abs{\int_0^T z(x,0) \overline{\varphi}_\epsilon^{y,s}(x,0)ds} \leq CB\rho_\epsilon(x-y) \epsilon
\end{equation}
Combining the above with Lemma \ref{lem:rho} gives, 
\begin{equation}
\begin{split}
    \abs{\int_0^1\int_0^1\int_0^T z(x,0) \overline{\varphi}_\epsilon^{y,s}(x,0) dsdxdy} &\leq CB\epsilon \int_0^1\int_0^1 \rho_\epsilon(x-y) dx dy \leq 2CB\epsilon. 
\end{split}
\end{equation}
Similarly it holds that,
\begin{equation}
    \abs{\int_0^1\int_0^1\int_0^T z(x,T) \overline{\varphi}_\epsilon^{y,s}(x,T) dsdxdy} \leq 2CB\epsilon. 
\end{equation}
Combining the two above equation with \eqref{eq:integration-parts} and redefining $C$ then concludes the proof of the lemma. 
\end{proof}

\begin{lemma}\label{lem:partial-int-x}
Let $T,\epsilon>0$, $u\in W^{1,\infty}([0,1]\times[0,T])$ with $u(0,t)=u(1,t)$ for all $t\in[0,T]$ and $\overline{\varphi}_\epsilon^{y,s}$, $(y,s)\in [0,1]\times[0,T]$, as defined in \eqref{eq:varphi}. There exists an constant $C>0$ (independent of $u,v$ and $\epsilon$) such that,
\begin{equation}
\begin{split}
    &{\int_0^1\int_0^T\int_0^1\int_0^T\left(\overline{\varphi}_\epsilon^{y,s}(x,t) \partial_x Q[u(x,t); v(y,s)] + Q[u(x,t); v(y,s)]\partial_x \overline{\varphi}_\epsilon^{y,s}(x,t) \right)dt dx ds dy}\\
    &\qquad \leq 12L_f \int_0^T \abs{v(1,t)-v(0,t)}dt+CL_f\norm{v_x}_{\infty} (1-\ln(\epsilon))\epsilon
\end{split}
\end{equation}
\end{lemma}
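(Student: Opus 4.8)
The plan is to observe that the $x$-integrand is an exact derivative in $x$, integrate in $x$, and then use the periodicity of $u$ to pair the resulting boundary terms. Since $f$ is Lipschitz and $u\in W^{1,\infty}$, Lemma~\ref{lem:Q} shows that $a\mapsto Q[a;c]$ is Lipschitz, so for each $t$ the map $x\mapsto Q[u(x,t);v(y,s)]$ is Lipschitz, hence absolutely continuous, and multiplying by the smooth function $x\mapsto\overline{\varphi}_\epsilon^{y,s}(x,t)$ keeps it absolutely continuous. The Leibniz rule then gives, for a.e.\ $x$, that the $x$-integrand equals $\partial_x\!\left(\overline{\varphi}_\epsilon^{y,s}(x,t)\,Q[u(x,t);v(y,s)]\right)$, and since this integrand is bounded (using $u,v\in L^\infty$, the boundedness of $\overline{\varphi}_\epsilon^{y,s}$ and $\partial_x\overline{\varphi}_\epsilon^{y,s}$, and $\partial_x Q[u(\cdot,t);c]\in L^\infty$ because $Q$ is Lipschitz and $u\in W^{1,\infty}$), Fubini allows integrating in $x$ first. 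The fundamental theorem of calculus then replaces the inner integral by $\overline{\varphi}_\epsilon^{y,s}(1,t)\,Q[u(1,t);v(y,s)]-\overline{\varphi}_\epsilon^{y,s}(0,t)\,Q[u(0,t);v(y,s)]$.

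Using $u(0,t)=u(1,t)$ (both $Q$-arguments then equal $Q[u(0,t);v(y,s)]$) and $\rho_\epsilon(-y)=\rho_\epsilon(y)$, the quantity to be bounded becomes
\[
  \int_0^1 \int_0^T \int_0^T \chi_\epsilon\!\left(\tfrac{t+s}{2}\right)\rho_\epsilon(t-s)\left(\rho_\epsilon(1-y)-\rho_\epsilon(y)\right)Q[u(0,t);v(y,s)]\,dt\,ds\,dy.
\]
Substituting $w=1-y$ in the part carrying $\rho_\epsilon(1-y)$, the $y$-integral equals $\int_0^1\rho_\epsilon(w)\left(Q[u(0,t);v(1-w,s)]-Q[u(0,t);v(w,s)]\right)dw$. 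The two $Q$-values differ only in their second argument, so by the ``Lipschitz in $c$'' analogue of Lemma~\ref{lem:Q}, namely $\abs{Q[a;c]-Q[a;c']}\le 3L_f\abs{c-c'}$ (proved exactly as Lemma~\ref{lem:Q}), combined with $\abs{v(1-w,s)-v(w,s)}\le\abs{v(1,s)-v(0,s)}+2w\norm{v_x}_\infty$, the integrand is at most $3L_f\abs{v(1,s)-v(0,s)}+6L_f\norm{v_x}_\infty w$. Integrating against $\rho_\epsilon$ and using $\int_0^1\rho_\epsilon(w)\,dw\le 1$ (half of $\int_\R\rho_\epsilon=2$, by evenness of $\rho_\epsilon$) together with the first-moment estimate $\int_0^1\rho_\epsilon(w)\,w\,dw=O(1/\beta)=O(\epsilon^3/\ln(1/\epsilon))$ (obtained by splitting at $w=\epsilon^6$ and using the exponential decay of $\sigma'$, exactly as in the proof of Lemma~\ref{lem:rho}), one bounds the $y$-integral by $3L_f\abs{v(1,s)-v(0,s)}+CL_f\norm{v_x}_\infty\epsilon^3$.

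It then remains to integrate over $(t,s)$ against $\chi_\epsilon(\tfrac{t+s}{2})\rho_\epsilon(t-s)$. For fixed $s$, $\int_0^T\chi_\epsilon(\tfrac{t+s}{2})\rho_\epsilon(t-s)\,dt\le\norm{\chi_\epsilon}_{L^\infty}\int_\R\rho_\epsilon\le\tfrac{2}{\sigma(\alpha\epsilon)}\le 4$ once $\epsilon$ is small enough, since $\sigma(\alpha\epsilon)=\sigma(3\ln(1/\epsilon))=\tfrac{1-\epsilon^6}{1+\epsilon^6}\ge\tfrac12$ while $\chi_\epsilon$ is $\tfrac{1}{2\sigma(\alpha\epsilon)}$ times a difference of two sigmoids, so $\norm{\chi_\epsilon}_{L^\infty}\le\tfrac{1}{\sigma(\alpha\epsilon)}$. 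Hence the term carrying $\abs{v(1,s)-v(0,s)}$ contributes at most $3L_f\cdot 4\int_0^T\abs{v(1,s)-v(0,s)}\,ds=12L_f\int_0^T\abs{v(1,t)-v(0,t)}\,dt$, exactly the first term in the statement, and the $O(\epsilon^3)$ term contributes at most $CL_f\norm{v_x}_\infty\epsilon^3\int_0^T\int_0^T\chi_\epsilon\rho_\epsilon\,dt\,ds\le 4CT\,L_f\norm{v_x}_\infty\epsilon^3\le CL_f\norm{v_x}_\infty(1-\ln\epsilon)\epsilon$, after absorbing $T$ and the logarithmic factor (using $\epsilon^3\le(1-\ln\epsilon)\epsilon$ for $0<\epsilon<1$) into the constant $C$, which may depend on $T$.

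The conceptual content is modest: everything rests on the exact-derivative structure in $x$ together with periodicity of $u$. The main obstacle is the bookkeeping forced by $\rho_\epsilon$ being built from $\tanh$ and hence not compactly supported --- the boundary terms at $x=0$ and $x=1$ do not vanish, but must be paired via $u(0,t)=u(1,t)$, after which one still has to control the residual tail and first moment of $\rho_\epsilon$ (both $O(\epsilon^3)$ up to logarithmic factors), which is precisely what produces the $O(\norm{v_x}_\infty(1-\ln\epsilon)\epsilon)$ error term; obtaining the clean constant $12L_f$ additionally uses the elementary bounds $\norm{\chi_\epsilon}_{L^\infty}\le\tfrac{1}{\sigma(\alpha\epsilon)}$ and $\int_\R\rho_\epsilon=2$.
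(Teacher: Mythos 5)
Your argument is correct, and it follows the paper's proof in its first and decisive step: integrate by parts in $x$ (justified exactly as you do, via Lipschitz continuity of $a\mapsto Q[a;c]$ from Lemma~\ref{lem:Q} and $u\in W^{1,\infty}$), reduce to the boundary terms at $x=0,1$, and exploit $u(0,t)=u(1,t)$ together with the symmetry of $Q$ in its two arguments to convert differences of $Q$-values into $3L_f$ times differences of $v$-values. Where you diverge is the mid-level bookkeeping. The paper adds and subtracts the boundary traces $v(1,s)$, $v(0,s)$, producing a main term $(A)$ (handled by periodicity and the Lipschitz-in-$c$ bound, giving the $12L_f\int_0^T\abs{v(1,t)-v(0,t)}dt$ contribution) plus two correction terms $(B)$, $(C)$ in which $\abs{v(y,s)-v(0,s)}$ is integrated against $\rho_\epsilon(y)$; these are controlled by combining the smoothed absolute value of Lemma~\ref{lem:smooth-abs} with the concentration estimate of Lemma~\ref{lem:rho}, which is what yields the $\norm{v_x}_\infty(1-\ln\epsilon)\epsilon$ term. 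You instead pair the two boundary terms immediately via the evenness of $\rho_\epsilon$ and the reflection $w=1-y$, apply a single Lipschitz-in-$c$ estimate with the pointwise bound $\abs{v(1-w,s)-v(w,s)}\le\abs{v(1,s)-v(0,s)}+2w\norm{v_x}_\infty$, and finish with the mass bound $\int_0^1\rho_\epsilon\le 1$ and a first-moment estimate $\int_0^1 w\,\rho_\epsilon(w)\,dw=\bigO(1/\beta)$. This buys two things: you avoid Lemma~\ref{lem:smooth-abs} and the finer part of Lemma~\ref{lem:rho} entirely, and your error term comes out as $\bigO(\epsilon^3)$ (up to logarithms), strictly smaller than the stated $(1-\ln\epsilon)\epsilon$, which you then absorb; the price is that your constant visibly carries a factor $T$ from the $(t,s)$-integration, which is harmless since the lemma's $C$ is only required to be independent of $u$, $v$ and $\epsilon$. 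Your derivation of the exact constant $12L_f$ (via $\norm{\chi_\epsilon}_{L^\infty}\le 1/\sigma(\alpha\epsilon)$, $\int_\R\rho_\epsilon=2$, and $3L_f\cdot 4$) is also sound, with the same small-$\epsilon$ (and $T>4\epsilon$) caveat that the paper itself takes for granted.
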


\begin{proof}
Using integration by parts and Fubini's theorem we find that,
\begin{equation}
\begin{split}
    &\int_0^1\int_0^1\left(\overline{\varphi}_\epsilon^{y,s}(x,t) \partial_x Q[u(x,t); v(y,s)] + Q[u(x,t); v(y,s)]\partial_x \overline{\varphi}_\epsilon^{y,s}(x,t) \right) dx  dy\\
    &\quad = \int_0^1 \left(Q[u(1,t); v(y,s)]\overline{\varphi}_\epsilon^{y,s}(1,t)   - Q[u(0,t); v(y,s)] \overline{\varphi}_\epsilon^{y,s}(0,t) \right) dy\\
    &\quad= \int_0^1 \left(Q[u(1,t); v(1,s)]\overline{\varphi}_\epsilon^{y,s}(1,t)   - Q[u(0,t); v(0,s)] \overline{\varphi}_\epsilon^{y,s}(0,t) \right) dy \quad {\scriptstyle=:\: (A)}\\
    &\qquad+\int_0^1 \left(Q[u(1,t); v(y,s)]   - Q[u(1,t); v(1,s)] \right)\overline{\varphi}_\epsilon^{y,s}(1,t) dy \quad {\scriptstyle=:\: (B)}\\
    &\qquad+\int_0^1 \left(Q[u(0,t); v(0,s)]   - Q[u(0,t); v(y,s)] \right)\overline{\varphi}_\epsilon^{y,s}(0,t) dy \quad {\scriptstyle=:\: (C)}. 
\end{split}
\end{equation}
We first bound $(A)$. Recall that $\overline{\varphi}_\epsilon^{y,s}(x,t)=\chi_\epsilon\left(\frac{t+s}{2}\right)\rho_\epsilon(t-s)\rho_\epsilon(x-y) =: z(t,s)\rho_\epsilon(x-y)$ and that $u(0,t)=u(1,t)$. We calculate,
\begin{equation}
    \begin{split}
(A) &= z(t,s)Q[u(1,t); v(1,s)] \int_0^1 \left(\rho_\epsilon(1-y)   - \rho_\epsilon(y) \right) dy\\
&\quad + z(t,s) \left(Q[u(1,t); v(1,s)]-Q[u(0,t); v(0,s)] \right)\int_0^1 \rho_\epsilon(y) dy.
    \end{split}
\end{equation}
Using Lemma \ref{lem:Q}, Lemma \ref{lem:rho} and the fact that $u(0,t)=u(1,t)$ we find that,
\begin{equation}
    \abs{(A)}\leq z(t,s)\cdot 3L_f \abs{v(1,s)-v(0,s)}\cdot 2
\end{equation}
and as a result, 
\begin{equation}
    \abs{\int_0^T\int_0^T (A) \:dt ds} \leq 12 L_f \int_0^T \abs{v(1,s)-v(0,s)}ds, 
\end{equation}
where we used that $\int_0^T \rho_\epsilon(t-s)dt \leq 2$ (Lemma \ref{lem:rho}). 

The terms $(B)$ and $(C)$ are similar to each other and can be bounded in the same way. We will prove an upper bound on $(C)$. Using Lemma \ref{lem:Q} and the non-negativity of $\rho_\epsilon$ we find that,
\begin{equation}
    \abs{(C)}\leq 3L_f z(t,s) \int_0^1 \abs{v(y,s)-v(0,s)}\rho_\epsilon(y)dy. 
\end{equation}
In addition, using Lemma \ref{lem:smooth-abs} and Lemma \ref{lem:rho} (for any $\eta>0$), 
\begin{equation}
\begin{split}
    \int_0^{1} \abs{v(y,s)-v(0,s)}\rho_\epsilon(y)dy &\leq \int_0^{1} \abs{v(y,s)-v(0,s)}_\eta \rho_\epsilon(y)dy +C\eta\\
    &\leq 20\norm{v_x}_{\infty} (1-\ln(\epsilon))\epsilon + C\eta. 
\end{split}
\end{equation}
Finally we can easily see that $\int_0^T\int_0^T z(t,s) ds dt \leq 4$. Combining everything then proves the lemma. 
\end{proof}

\subsection{Auxiliary results for Section \ref{sec:33}}

\begin{lemma}\label{lem:bound-generalization}
Let $a,B,\mathfrak{L}\geq1$, $k,d,M\in\mathbb{N}$, $D$ a set, $(\Omega, \mathcal{A}, \mathbb{P})$ a probability space, $\Theta = [-a,a]^k$ and let $f:D\to \mathbb{R}$ and $f_\theta: D\to \mathbb{R}$ be functions for all $\theta\in\Theta$. Let $X_i:\Omega\to D$, $1\leq i \leq M$ be iid random variables, $\S=\{X_1, \ldots X_M\}$, let $\Et:\Theta\times D^M\to[0,B]$ and $\Eg:\Theta\to [0,B]$ be given by
\begin{align}\label{eqn:training-generalization-minimizer}
\begin{split}
    \Et(\theta,\S) &= \frac{1}{M}\sum_{i=1}^M\abs{f_\theta(z_i)-f(z_i)}, \quad \Eg(\theta)^2 = \int_D \abs{f_\theta(z)-f(z)} d\mu(z), 
\end{split}
\end{align}
and let $\theta^*:D^M\to\Theta$ be a function. Let $\theta\mapsto\Eg(\theta,\S)$ and $\theta\mapsto\Et(\theta)$ be Lipschitz continuous with Lipschitz constant $\mathfrak{L}$. If $M\geq e^{16/k}$ then it holds that 
\begin{equation}
    \Prob{\Eg(\theta^*(\S)) > \Et(\theta^*(\S),\S) + \sqrt{\frac{B^2k}{2M}\ln(\frac{a\mathfrak{L}\sqrt{M}}{\sqrt[k]{\delta} B})}}\leq \delta.
\end{equation}
\end{lemma}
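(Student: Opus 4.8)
The plan is a textbook covering-number argument: reduce the uniform-in-$\theta$ control of the generalization gap to a finite union bound over a net of $\Theta$, apply Hoeffding's inequality at each net point, and transfer the bound back to all $\theta$ (in particular to the data-dependent minimizer $\theta^*(\S)$) via the assumed Lipschitz continuity; the only genuine work is the final choice of the net resolution so that the constants collapse into the stated closed form.

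First I would fix $\theta\in\Theta$ and set $Y_i(\theta):=\abs{f_\theta(X_i)-f(X_i)}\in[0,B]$, which are i.i.d.\ with $\E{Y_1(\theta)}=\Eg(\theta)$ (reading $\Eg(\theta)$ as $\int_D\abs{f_\theta(z)-f(z)}\,d\mu(z)$, consistently with the $L^1$-form of $\Et$ and with the way the lemma is invoked in the proof of Theorem~\ref{thm:generalization}), while $\Et(\theta,\S)=\tfrac1M\sum_{i=1}^M Y_i(\theta)$ is their empirical mean. Hoeffding's inequality then yields, for every $s>0$,
\begin{equation}
    \Prob{\Eg(\theta)-\Et(\theta,\S)>s}\leq \exp\!\left(-\tfrac{2Ms^2}{B^2}\right).
\end{equation}
Next I would fix a $\tau$-net $\mathcal{C}_\tau\subset\Theta=[-a,a]^k$ for $\norm{\cdot}_\infty$ with $\abs{\mathcal{C}_\tau}\leq(a/\tau+1)^k$ and take a union bound: with probability at least $1-(a/\tau+1)^k\exp(-2Ms^2/B^2)$ one has $\Eg(\vartheta)-\Et(\vartheta,\S)\leq s$ for all $\vartheta\in\mathcal{C}_\tau$ simultaneously.

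On that event, given an arbitrary $\theta\in\Theta$ pick $\vartheta\in\mathcal{C}_\tau$ with $\norm{\theta-\vartheta}_\infty\leq\tau$; since both $\theta\mapsto\Eg(\theta)$ and $\theta\mapsto\Et(\theta,\S)$ are $\mathfrak{L}$-Lipschitz,
\begin{equation}
    \Eg(\theta)-\Et(\theta,\S)\leq\bigl(\Eg(\vartheta)-\Et(\vartheta,\S)\bigr)+2\mathfrak{L}\tau\leq s+2\mathfrak{L}\tau,
\end{equation}
and in particular this holds for $\theta=\theta^*(\S)$. It then remains to pick $s$ and $\tau$: choosing $\tau$ of order $B/(\mathfrak{L}\sqrt{M})$ makes $2\mathfrak{L}\tau$ a genuinely lower-order $\bigO(M^{-1/2})$ term and turns $\abs{\mathcal{C}_\tau}$ into roughly $(a\mathfrak{L}\sqrt{M}/B)^k$, and then solving $(a\mathfrak{L}\sqrt{M}/B)^k\exp(-2Ms^2/B^2)=\delta$ for $s$ gives $s=\sqrt{\tfrac{B^2}{2M}\ln\!\bigl((a\mathfrak{L}\sqrt{M}/B)^k/\delta\bigr)}=\sqrt{\tfrac{B^2k}{2M}\ln\!\bigl(\tfrac{a\mathfrak{L}\sqrt{M}}{\sqrt[k]{\delta}B}\bigr)}$, which is exactly the right-hand side in the statement.

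The step I expect to be the main obstacle is precisely this last calibration: one must absorb the additive $+1$ inside the net cardinality, the extra $2\mathfrak{L}\tau$ slack, and all multiplicative constants into the clean expression above without enlarging the displayed constant, and this is where the hypothesis $M\geq e^{16/k}$ (equivalently $\tfrac{k}{2}\ln M\geq 8$) is used — it leaves just enough room in the $k\ln(\,\cdot\,\sqrt{M}\,)$ term to swallow the lower-order contributions. The remaining ingredients (Hoeffding, covering, union bound, Lipschitz transfer) are routine.
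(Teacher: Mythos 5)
Your architecture (Hoeffding at the points of a finite net of $\Theta$, union bound, Lipschitz transfer to the data-dependent $\theta^*(\S)$) is indeed the mechanism behind this lemma; the paper itself imports that part wholesale as inequality (4.8) of the proof of Theorem 5 in \cite{deryck2021pinn}, which states $\Prob{Y>\epsilon}\leq\left(\tfrac{2a\mathfrak{L}}{\epsilon}\right)^k\exp\left(-\tfrac{2\epsilon^2M}{B^2}\right)$ for $Y=\Eg(\theta^*(\S))-\Et(\theta^*(\S),\S)$. The genuine gap is exactly at the step you flag as the main obstacle: the calibration with a \emph{fixed} net radius $\tau\sim B/(\mathfrak{L}\sqrt{M})$ cannot produce the stated threshold. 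To conclude you need simultaneously $s+2\mathfrak{L}\tau\leq E$ and $(a/\tau+1)^k e^{-2Ms^2/B^2}\leq\delta$, where $E^2=\tfrac{B^2}{2M}\bigl(k\Lambda+\ln\tfrac1\delta\bigr)$ with $\Lambda=\ln\tfrac{a\mathfrak{L}\sqrt{M}}{B}$ is the stated bound, which has \emph{no} slack built in. Taking the best case $s=E-2\mathfrak{L}\tau$ with $\tau=cB/(\mathfrak{L}\sqrt{M})$, the second condition forces
\begin{equation}
    k\ln\left(\frac{1}{c}+\frac{B}{a\mathfrak{L}\sqrt{M}}\right)+4\sqrt{2}\,c\sqrt{k\Lambda+\ln\tfrac1\delta}\;\leq\;8c^2,
\end{equation}
because the exponent is quadratic in $s$ and the cross term $\tfrac{8M\mathfrak{L}\tau E}{B^2}$ scales like $\sqrt{k\Lambda+\ln(1/\delta)}$. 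The left side is unbounded as $\delta\to0$ (or $M\to\infty$) while the right side is a constant, so no fixed $c$ works: the $2\mathfrak{L}\tau$ term is lower order relative to $E$, but the statement leaves zero room for it, and compensating by shrinking $s$ costs more in the exponential than the bounded gains ($k\ln c$ and $8c^2$) can cover. The hypothesis $M\geq e^{16/k}$ does not rescue this; that is not its role.

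What the paper does instead is tie the net radius to the deviation $\epsilon$ itself (hence the $\epsilon$-dependent covering number $(2a\mathfrak{L}/\epsilon)^k$ in the quoted inequality), so there is no separate additive Lipschitz slack; the price is that setting $\delta=\Prob{Y>\epsilon}$ yields the \emph{implicit} equation $\epsilon=\sqrt{\tfrac{B^2}{2M}\ln\bigl(\tfrac1\delta(\tfrac{2a\mathfrak{L}}{\epsilon})^k\bigr)}$. This is then solved by a bootstrap: under $\delta\epsilon^k\leq Be^{-8}(2a\mathfrak{L})^k$ (which $M\geq e^{16/k}$ together with $\delta<1\leq B$ guarantees) the logarithm is at least $8$, so $[\ln(\cdot)]^{-1/2}\leq\tfrac1{2\sqrt2}$; substituting the implicit relation back into itself gives $\tfrac{2a\mathfrak{L}}{\epsilon}\leq\tfrac{a\mathfrak{L}\sqrt{M}}{B}$, and the factor $2$ and the $\epsilon$ in the covering number collapse exactly into $\sqrt{\tfrac{B^2k}{2M}\ln\bigl(\tfrac{a\mathfrak{L}\sqrt{M}}{\sqrt[k]{\delta}B}\bigr)}$. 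So to complete your proof you would have to either reproduce this fixed-point/self-consistency argument (choosing the net radius proportional to the sought threshold) or settle for a bound with worse constants than the lemma asserts; the routine ingredients you list are fine, but the decisive calibration as you propose it does not go through.
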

\begin{proof}
Let $\epsilon>0$ be arbitrary, let $\{\theta_i\}_{i=1}^N$ be a $\delta$-covering of $\Theta$ with respect to the supremum norm and define the random variable $Y = \Eg(\theta^*(\S))-\Et(\theta^*(\S),\S)$.
Then if follows from equation (4.8) in the proof of \cite[Theorem 5]{deryck2021pinn} that 
\begin{equation}
    \mathbb{P}(Y > \epsilon) \leq \left(\frac{2a\mathfrak{L}}{\epsilon}\right)^k \exp\left(\frac{-2\epsilon^2M}{B^2}\right),
\end{equation}
since $\mathbb{P}(Y > \epsilon) = 1 - \Prob{\cA}$, where $\cA$ is as defined in the proof of \cite[Theorem 5]{deryck2021pinn}. 
Setting $\delta = \Prob{Y> \epsilon}$ leads to
\begin{equation}\label{eq:eps-eq}
    \epsilon = \sqrt{\frac{B^2}{2M}\ln(\frac{1}{\delta}\left(\frac{2a\mathfrak{L}}{\epsilon}\right)^k)}. 
\end{equation}
If $\delta\epsilon^{k}\leq Be^{-8}(2a\mathfrak{L})^k$ then it holds that 
\begin{equation}\label{eq:2sqrt2}
    \left[\ln(\frac{1}{\delta}\left(\frac{2a\mathfrak{L}}{\epsilon}\right)^k)\right]^{-1/2} \leq \frac{1}{2\sqrt{2}}. 
\end{equation}
Using \eqref{eq:eps-eq} and \eqref{eq:2sqrt2} gives us, 
\begin{equation}
    \begin{split}
\epsilon &\leq \sqrt{\frac{B^2}{2M}\ln(\frac{(2a\mathfrak{L})^{k}}{\delta}\left(\frac{\sqrt{2M}}{B}\left[\ln(\frac{1}{\delta}\left(\frac{2a\mathfrak{L}}{\epsilon}\right)^k)\right]^{-1/2}\right)^{k})}\\
&\leq \sqrt{\frac{B^2}{2M}\ln(\frac{(a\mathfrak{L}\sqrt{M})^{k}}{\delta B^k})} = \sqrt{\frac{B^2k}{2M}\ln(\frac{a\mathfrak{L}\sqrt{M}}{\sqrt[k]{\delta} B})}.
    \end{split}
\end{equation}
Finally, we can use \eqref{eq:eps-eq} to observe that the condition $\delta\epsilon^{k}\leq Be^{-8}(2a\mathfrak{L})^k$ is met if $\delta<1\leq B$ and $M\geq e^{16/k}$ because
\begin{equation}
    \left(\frac{e^{8/k}}{2a\cL}\right)^2 \frac{2}{B^2\ln((2a\cL/\epsilon)^k/\delta)} \leq e^{16/k}. 
\end{equation}
\end{proof}

\end{document}